\newcommand\independent{\protect\mathpalette{\protect\independenT}{\perp}}
\def\independenT#1#2{\mathrel{\rlap{$#1#2$}\mkern2mu{#1#2}}}
\numberwithin{equation}{section}
\theoremstyle{plain}
\newtheorem*{theorem*}{Theorem}
\newtheorem{theorem}{Theorem}
\numberwithin{theorem}{section}
\newtheorem{proposition}[theorem]{Proposition}
\newtheorem{lemma}[theorem]{Lemma}
\newtheorem{corollary}[theorem]{Corollary}
\theoremstyle{definition}
\newtheorem{definition}[theorem]{Definition}
\newtheorem{remark}[theorem]{Remark}
\newtheorem{example}[theorem]{Example}
\newcommand{\M}{\mathcal{M}}
\newcommand{\C}{\mathbb{C}}
\newcommand{\Z}{\mathbb{Z}}
\newcommand{\R}{\mathbb{R}}
\newcommand{\N}{\mathbb{N}}
\newcommand{\indep}{\perp \!\!\! \perp}
\newcommand{\Sec}{\mathrm{Sec}}
\newcommand{\rank}{\mathrm{rk}}
\DeclareMathOperator{\glo}{global}
\DeclareMathOperator{\MLdeg}{MLdeg}
\DeclareMathOperator{\Mixt}{Mixt}
\newcommand{\rk}{\text{rank}}
\definecolor{MyBlue}{RGB}{0,101,189} 
\definecolor{MyRed}{RGB}{234, 114, 55} 
\definecolor{MyGreen}{RGB}{162,173,0}
\newcommand{\bfa}{\mathbf{a}}
\newcommand{\bfd}{\mathbf{d}}
\newcommand{\bfh}{\mathbf{h}}
\newcommand{\bfi}{\mathbf{i}}
\newcommand{\bfp}{\mathbf{p}}
\newcommand{\bfx}{\mathbf{x}}
\newcommand{\bfy}{\mathbf{y}}
\newcommand{\bfabar}{\mathbf{\bar{a}}}
\newcommand{\bfdbar}{\mathbf{\bar{d}}}
\newcommand{\bfhbar}{\mathbf{\bar{h}}}
\newcommand{\footremember}[2]{%
    \footnote{#2}
    \newcounter{#1}
    \setcounter{#1}{\value{footnote}}%
}
\title{Mixtures of Discrete Decomposable \\ Graphical Models}
\author{Yulia Alexandr\footremember{yulia}{University of California, Los Angeles; \href{mailto:yulia@math.berkeley.edu}{yulia@math.ucla.edu}}
\and Jane Ivy Coons\footremember{jane}{St John's College, University of Oxford; \href{mailto:jane.coons@maths.ox.ac.uk}{jane.coons@maths.ox.ac.uk}}  
\and Nils Sturma\footremember{nils}{Technical University of Munich, Germany; \href{mailto:nils.sturma@tum.de}{nils.sturma@tum.de}}}
\date{ }
\begin{document}
\maketitle
\begin{abstract}
We study mixtures of decomposable graphical models, focusing on their ideals and dimensions. For mixtures of clique stars, we characterize the ideals in terms of ideals of mixtures of independence models. We also give a recursive formula for their ML degrees.  Finally, we prove that second secant varieties of all other decomposable graphical models have the expected dimension. \looseness=-1
\end{abstract}

\section{Introduction}
Undirected graphical models are  statistical models that capture complex relationships between a collection of random variables. Each random variable is represented by a node in the graph, while the edges specify conditional dependence relations between the random variables. The assumption that statistical relationships have a graphical representation is ubiquitous in applications, as the associated model adopts a natural parametrization which can be read from the structure of the underlying graph. Graphical models are therefore widely used in statistics~\cite{cowell1999probabilistic, edwards2012introduction, hojsgaard2012graphical, whittaker1990graphical} and beyond~\cite{koller2009probabilistic, sinoquet2014probabilistic}. \looseness=-1

In the case where all variables are observable, graphical models are well-studied, see~\cite{lauritzen1996graphical,maathuis2019handbook, sullivant2018algebraic}. However, in practice, one often encounters latent variables that are impossible or infeasible to measure directly which can lead to geometric challenges. Mixture models are among the most accessible latent variable models. Given a statistical model $\mathcal{M}$, the $r$-th mixture model consists of distributions \looseness=-1
\[
\text{Mixt}^r(\mathcal{M}):=\{\pi_1 p^1 + \cdots + \pi_r p^r: (\pi_1,\ldots,\pi_r) \in \Delta_{r} \text{ and } p^1,\ldots,p^r \in \mathcal{M}\},
\]
where $\Delta_{r}$ denotes the  ($r-1$)-dimensional probability simplex.
An interpretation of mixture models is that there are several subpopulations in a population. The proportion of the $i$-th subpopulation in the full population is $\pi_i$ and the $i$-th subpopulation follows a distribution $p^i \in \mathcal{M}$. When observing an individual from the population, we do not know to which subpopulation the individual belongs~\cite[Chapter 14]{sullivant2018algebraic}. Said differently, mixture models feature one latent variable that can be interpreted as representing the unknown membership to the subpopulation. Latent variable models for discrete variables are applied in many disciplines such as psychometrics~\cite{lee2021estimating}, social sciences~\cite{collins2013latent} or phylogenetics~\cite{allman2008phylogenetic, semple2003phylogenetics, zwiernik2016semialgebraic}.

In this paper, we study mixtures of undirected graphical models for discrete random variables in the case where the underlying graph is decomposable. We consider the vanishing ideals and dimensions of these models. The dimension is of particular importance because it can be used to deduce identifiability results. Indeed, the Zariski closure of the mixture model is the $r$th secant variety of the original graphical model. This secant variety may have the expected dimension $r \dim(\mathcal{M}) + r -1$ or it could be defective (Definition \ref{def:expected-dim-defective}). 
If the secant variety has the expected dimension, then the map sending the tuple $(\pi,p^1,\dots,p^r) \in \Delta_r \times \mathcal{M}^r$ to the corresponding distribution in $\Mixt^r(\mathcal{M})$ is generically finite-to-one. 
Hence the parameters $(\pi,p^1,\dots,p^r)$ are \emph{locally identifiable}. 
If the secant variety is defective, then they are non-identifiable.

The simplest model one might consider is the full independence model. It is equivalent to the graphical model where the set of edges is empty. Discrete random variables $X_1,\dots,X_k$ on $n_i$  states respectively are mutually independent if and only if the $n_1 \times \dots \times n_k$ tensor of their joint probabilities has rank~$1$. By definition, the $r$-mixture of the independence model is thus given by the intersection of the probability simplex with the set of tensors of nonnegative rank at most~$r$. This model is also known as the latent class model; see e.g.~\cite{allman2019maximum}. For $r=2$, the geomerty of tensors of nonnegative rank is well-understood; see~\cite{allman2015tensors}. Tensors of nonnegative rank $r \geq 3$ are more complicated: $r$-mixtures of two independent random variables are studied in~\cite{kubjas2015fixed} and $r$-mixtures of multiple independent binary random variables are studied in~\cite{seigal2018mixtures}.  In general, an ideal-theoretic description of  tensors of nonnegative rank at most $r$ for arbitrary $r$ and arbitrary tensor dimension is still unknown. This ideal-theoretic description is the same as for tensors of rank at most $r$ of the same format, since the two sets have the same Zariski closure. 
On the other hand, the dimension of the set of tensors of rank $r$ is extensively studied; see~\cite{landsberg2012tensors} for an overview of known results. Notably, they may or may not have the expected dimension, depending on the value of $r$ and the shape of the tensor.

The present paper goes beyond the independence model by allowing for dependencies specified by an undirected decomposable graph. It is organized as follows. In Section \ref{sec:preliminaries}, we introduce undirected graphical models, mixture models, and secant varieties. In Section \ref{sec:clique stars}, we study the ideals of $r$-mixtures of graphical models. For models corresponding to a special family of decomposable graphs, which we call clique stars, we completely characterize the ideals of their $r$-mixtures in terms of certain tensors of rank~$r$ in Theorem~\ref{thm:clique star-ideals}. Consequently, their dimension depends on the dimension of sets of tensors of rank $r$. In particular, they might be defective. In Section \ref{sec:dimension}, we focus on the dimension of $2$-mixture models of general decomposable graphs. Theorem~\ref{thm:ExpectedDimension} proves that they always have expected dimension if the graph is not a clique star. Finally, Section \ref{sec:ml-degree} gives a recursive formula for the ML degree of mixtures of clique stars.

\section{Preliminaries}\label{sec:preliminaries}
In this section, we formally define undirected graphical models and mixture models. We collect results from the literature on the vanishing ideal and dimensions of graphical models and introduce the tools used in the proofs of our results.

\subsection{Undirected Graphical Models}

For each $n$, denote by $\Delta_n$ the $(n-1)$-dimensional open probability simplex in $\R^n$; that is, 
$$
\Delta_n = \{ \bfp \in \R^n \mid p_i > 0 \text{ for all $i$ and } p_1 + \cdots + p_n = 1 \}.
$$
Undirected graphical models are parameterized subsets of the probability simplex. To define them, let $G=(V, E)$ be an undirected graph where the set of nodes corresponds to the indexing set of a discrete random vector $X=(X_v)_{v \in V}$. We denote by $[d_v]=\{1,\ldots,d_v\}$ the set of values taken by the discrete random variable $X_v$, where $d_v$ is a positive integer. Thus, the random vector $X$ takes values in the state space $\mathcal{R}=\prod_{v \in V} [d_v]$. For a subset $A \subseteq V$, we denote the corresponding subvector by $X_A=(X_v)_{v \in A}$ and its state space by $\mathcal{R}_A=\prod_{v\in A} [d_v]$, where the number of states is given by $d_A = \prod_{v\in A} d_v$. 

Graphical models are parametrized using maximal cliques of the underlying graph $G$. A \textit{clique} $C \subseteq V$ is a set of nodes such that $\{v,w\} \in E$ for every pair $v,w \in C$. A clique is \textit{maximal} if it is containment-maximal among the set of all cliques in $G$. We denote the set of maximal cliques by~$\mathcal{C}(G)$. The following definition is from \cite[Chapter 13]{sullivant2018algebraic}, also see \cite{lauritzen1996graphical}.

\begin{definition}
Let $G$ be an undirected graph. The parameter space of the graphical model specified by $G$ is the set of all tuples of the form $\theta = \Big(\theta_{i_C}^{(C)}\Big)_{i_C \in \mathcal{R}_C, C \in \mathcal{C}(G)}$. Let $m$ be the dimension of such a vector so that $m = \sum_{C \in \mathcal{C}(G)} \prod_{v \in C} d_v$. Define the monomial map $\phi^G: \mathbb{C}^m \rightarrow \mathbb{C}^{|\mathcal{R}|}$ by
\[
    \phi^G_i(\theta) = \prod_{C \in \mathcal{C}(G)} \theta_{i_C}^{(C)}.
\]
The \emph{(parametrized) discrete graphical model} associated to $G$ is given by 
\[
    \M_G := \phi^G(\mathbb{C}^m) \cap \Delta_{|\mathcal{R}|}.
\]
\end{definition}

We denote the coordinates of $\C^{|\mathcal{R}|}$ by $p_{i_1\dots\ i_n}$ where each $i_v \in [d_v]$.  The parameters $\theta_{i_C}^{(C)}$ represent the joint probability that the random variables in the clique $C$ have states $i_C$.

\tikzset{
      every node/.style={circle, inner sep=0.3mm, minimum size=0.3cm, draw, thick, black, fill=white, text=black},
      every path/.style={thick}
}

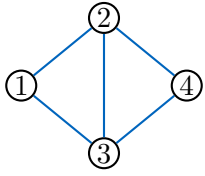
\begin{figure}[t]
\centering
\begin{tikzpicture}[align=center]
    \node[] (1) at (-1.1,0) {$1$};
    \node[] (2) at (0,0.9) {$2$};
    \node[] (3) at (0,-0.9) {$3$};
    \node[] (4) at (1.1,0) {$4$};
    
    \draw[MyBlue] (1) edge (2);
    \draw[MyBlue] (1) edge (3);
    \draw[MyBlue] (2) edge (4);
    \draw[MyBlue] (3) edge (4);
    \draw[MyBlue] (2) edge (3);
\end{tikzpicture}
\caption{Diamond graph.}
\label{fig:diamond-graph}
\end{figure}

\begin{example}\label{ex:diamond}
Consider the diamond graph in Figure $\ref{fig:diamond-graph}$ with nodes $V=[4]$. It has two maximal cliques $C_1 = \{1, 2, 3\}$ and $C_2=\{2,3,4\}$. If $X_1$ and $X_2$ are binary and $X_3$ and $X_4$ are ternary, we have $\mathcal{R} = [2] \times [2] \times [3] \times [3]$. Summing over the two cliques, we see that $m = 2 \cdot 2 \cdot 3 + 2 \cdot 3 \cdot 3 = 30$. So the parametrization of the graphical model is given by
    $
        p_{i_1 i_2 i_3 i_4} = \theta_{i_1 i_2 i_3}^{(C_1)} \theta_{i_2 i_3 i_4}^{(C_2)},
    $
    for all $i_1, i_2 \in [2]$ and $i_3, i_4 \in [3]$.
\end{example}

Denote by $\mathbb{C}[p]=\mathbb{C}[p_i | i \in \mathcal{R}]$ the polynomial ring in $|\mathcal{R}|$ indeterminates and let $I_G \subseteq \mathbb{C}[p]$ be the vanishing ideal of $\phi^{G}(\mathbb{C}^m)$. Since $\phi^{G}$ is a monomial map, $I_G$ is a \textit{toric ideal}, which means it is prime and generated by binomials. 

Alternatively to being defined as the image of a parametrization, undirected graphical models can also be defined as the set of distributions that satisfy certain conditional independence statements. For pairwise disjoint $A,B,S\subset V$ with $A$ and $B$ nonempty, we say that $S$ \textit{separates} $A$ and $B$ if
no pair of nodes $a\in A$ and $b\in B$ lie in the same connected component of the subgraph induced by $V\setminus S$. We will write $A\perp_{S} B\in G$ to denote that $S$ separates $A$ and $B$ in $G$; this is called a \emph{separation statement}. We say that a separation statement $A\perp_{S} B$ is \emph{saturated} if $A \cup S \cup B = V$. The \textit{global Markov property} of $G$ is the collection of conditional independence statements
$$\glo(G)=\{X_A\independent X_B\,|\,X_S: A\perp_{S} B\in G\}.$$
Since the parametrized model $\M_G$ is defined to be a subset of the open probability simplex $\Delta_{|\mathcal{R}|}$, all probabilities are positive and the Hammersley-Clifford theorem \cite[Theorem 13.2.3]{sullivant2018algebraic} verifies that the set of discrete probability distributions satisfying all conditional independence statements in $\glo(G)$ is indeed the same as $\M_G$.  

\begin{example}
    Consider the diamond graph in Figure $\ref{fig:diamond-graph}$.  Note that the set $C=\{2,3\}$ separates the sets $A=\{1\}$ and $B=\{4\}$ in the graph. Thus, for all discrete random vectors $X=(X_v)_{v \in V}$ with distribution $p \in \mathcal{M}_G$, the conditional independence statement $X_1 \indep X_4 | (X_2,X_3)$ holds. 
\end{example}

By~\cite[Proposition 4.1.6]{sullivant2018algebraic}, the conditional independence statement $X_A\independent X_B\,|\,X_S $  is satisfied if and only if for all $i_A,j_A\in\mathcal{R}_A$, $i_B,j_B\in\mathcal{R}_B$ and $i_S\in\mathcal{R}_S$, we have
\begin{align}\label{eq:ci-relations-binomials}
    p_{i_Ai_Bi_S+}\cdot p_{j_Aj_Bi_S+}-p_{i_Aj_Bi_S+}\cdot p_{j_Ai_Bi_S+}=0,
\end{align}
where
    \[
        p_{i_Ai_Bi_S+} := \sum_{j_{V\setminus (A \cup B \cup S)} \in \mathcal{R}_{V\setminus (A \cup B \cup S)}} p_{i_Ai_Bi_Si_{V\setminus (A \cup B \cup S)}}.
    \]

For example, consider the diamond graph from Example \ref{ex:diamond} with $A = \{1\}$, $B = \{4\}$ and $S = \{2,3\}$. Choosing $i_A = 1,$ $ j_A = 2$, $i_B = 2$, $j_B = 3$ and $i_S = 13$, we obtain the polynomial  $p_{1132}p_{2133} - p_{1133}p_{2132}$ such a polynomial. The ideal $I_{\glo(G)}$ is generated by all polynomails of the form~\eqref{eq:ci-relations-binomials}. This ideal may, in general, not be equal to the prime vanishing ideal $I_G$ of the model $\M_G$, but the containment $I_{\glo(G)}\subseteq I_G$ always holds; compare to  \cite{geiger2006onthetoric}. We summarize this observation in the next lemma that will be useful in Section~\ref{sec:clique stars}.
For a fixed state $j_S \in \mathcal{R}_S$, let $M_{j_S; A\independent B}$ be the $d_A\times d_B$ matrix with entries $p_{i_Ai_Bj_S+}$, where the rows are indexed by $i_A \in \mathcal{R}_A$, and the columns are indexed by $i_B \in \mathcal{R}_B$.

\begin{lemma}\label{lem:2x2-minors}
Suppose $A\perp_{S} B\in G$. Then, the prime ideal $I_G$ contains all $2\times 2$ minors of the matrix $M_{j_S; A\independent B}$ for every fixed value $j_S\in\mathcal{R}_S$.  
\end{lemma}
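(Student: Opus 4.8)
The plan is to identify the $2\times 2$ minors of $M_{j_S; A\independent B}$ with generators of the conditional independence ideal $I_{\glo(G)}$ and then invoke the containment $I_{\glo(G)}\subseteq I_G$ recalled above.

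First I would observe that the hypothesis $A\perp_{S} B\in G$ means precisely that the conditional independence statement $X_A\independent X_B\mid X_S$ is one of the statements in $\glo(G)$. By \cite[Proposition 4.1.6]{sullivant2018algebraic}, this statement is encoded exactly by the binomials in \eqref{eq:ci-relations-binomials}, i.e.\ by the polynomials $p_{i_Ai_Bi_S+}\, p_{j_Aj_Bi_S+}-p_{i_Aj_Bi_S+}\, p_{j_Ai_Bi_S+}$ for all $i_A,j_A\in\mathcal{R}_A$, $i_B,j_B\in\mathcal{R}_B$ and $i_S\in\mathcal{R}_S$. All of these polynomials lie in $I_{\glo(G)}$ by definition of that ideal.

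Next, fixing $j_S\in\mathcal{R}_S$, I would specialize $i_S=j_S$ in these binomials. Since the $(i_A,i_B)$-entry of $M_{j_S; A\independent B}$ is $p_{i_Ai_Bj_S+}$ by definition, the polynomial $p_{i_Ai_Bj_S+}\, p_{j_Aj_Bj_S+}-p_{i_Aj_Bj_S+}\, p_{j_Ai_Bj_S+}$ is exactly the $2\times 2$ minor of $M_{j_S; A\independent B}$ on rows $i_A,j_A$ and columns $i_B,j_B$; conversely, every $2\times 2$ minor of this matrix has this form. Hence all such minors lie in $I_{\glo(G)}$, and since $I_{\glo(G)}\subseteq I_G$, they lie in the prime ideal $I_G$, as claimed.

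I do not expect a genuine obstacle here: the statement is essentially a reindexing of the global Markov relations. The only point that requires care is checking that the marginalization ``$+$'' over $V\setminus(A\cup B\cup S)$ — which is nontrivial precisely when the separation $A\perp_{S}B$ is not saturated — is the same on the matrix side and on the conditional independence side, but this compatibility is already built into \cite[Proposition 4.1.6]{sullivant2018algebraic}.
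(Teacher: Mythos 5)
Your proof is correct and is essentially the argument the paper itself gives: the $2\times 2$ minors of $M_{j_S; A\independent B}$ are exactly the generators of $I_{\glo(G)}$ from~\eqref{eq:ci-relations-binomials} with $i_S=j_S$ fixed, and the containment $I_{\glo(G)}\subseteq I_G$ (stated just before the lemma, cf.~\cite{geiger2006onthetoric}) finishes the job. The paper presents the lemma as a summary of that same observation, so there is nothing further to add.
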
 

\begin{example}
    Consider again the diamond graph from Example \ref{ex:diamond} with $A = \{1\}$, $B = \{4\}$ and $S = \{2,3\}$. Fixing $j_S = 13$, and assuming that $X_1$ and $X_4$ are ternary random variables, we obtain the matrix
    \[
        M_{j_S; A\independent B} = \begin{pmatrix}
            p_{1131} & p_{1132} & p_{1133} \\
            p_{2131} & p_{2132} & p_{2133} \\
            p_{3131} & p_{3132} & p_{3133}
        \end{pmatrix}.
    \]
    Considering the first two rows and the last two columns, we see that the binomial $p_{1132}p_{2133} - p_{1133}p_{2132}$ is in the vanishing ideal $I_G$.
\end{example}

The dimensions of graphical models  were derived in~\cite{hosten2002grobner}. For any graph $G$, the dimension of the model $\mathcal{M}_G$ is
\begin{equation*} 
    \dim(\mathcal{M}_G) = \sum_{\substack{C \subseteq V \text{ clique} \\ C \neq \emptyset}} \,\, \prod_{v \in C} (d_v - 1),
\end{equation*}
where the sum runs over all nonempty cliques in the graph $G$. 

\begin{example}
The diamond graph in Figure \ref{fig:diamond-graph} has $4$ cliques of size 1 (nodes), $5$ cliques of size two (edges) and $2$ cliques of size three (triangles). If all random variables take $3$ states, that is, $d_1 = d_2 = d_3 = d_4 = 3$, it follows that
\[
    \dim(\M_G) = 4 \cdot 2 + 5 \cdot 4 + 2 \cdot 8 = 44.
\]
\end{example}

In the present work, we focus on the family of \emph{decomposable graphs}.  These are defined recursively in the following way.

\begin{definition} \cite[Definition~2.3]{lauritzen1996graphical}
    A graph $G$ is \emph{decomposable} if $G$ is complete or if there exist $A, B, S \subset V$ whose union is all of $V$ such that 
    \begin{itemize}
        \item[(i)] the subgraph induced by $S$ is complete,
        \item[(ii)] $S$ separates $A$ and $B$,
        \item[(iii)] the subgraphs induced by $A$ and $B$ are decomposable.
    \end{itemize} 
\end{definition}

Decomposability is equivalent to several other useful conditions. For example, a graph is decomposable if and only if it is chordal. In Section \ref{sec:dimension}, we make use of the fact that $G$ is decomposable if and only if there exists an order $D_1,\dots,D_k$ on its maximal cliques such that $G$ is obtained by repeatedly applying the clique-sum operation to the list in this order \cite{Dirac1961OnRC}, \cite[Theorem 8.3.5]{sullivant2018algebraic}. 

\subsection{Log-linear Models}
Since graphical models admit a monomial parametrization, they form a subclass of more general \textit{log-linear models}, which correspond algebraically to \textit{toric varieties}. Log-linear models are well-known in the literature and are described, for example, in \cite[Chapter 4]{lauritzen1996graphical} and \cite[Chapter 6]{sullivant2018algebraic}. In this section, we recall their definition and the most important properties.

Every log-linear model is specified by an integer matrix $A\in\mathbb{Z}^{m \times n}$ with the vector of all ones in its rowspan. It is defined as $\M_A = \{ p \in \Delta_n \mid \log p \in \mathrm{rowspan}(A) \}$, where $\log p$ denotes the component-wise logarithm of $p$. 
Relaxing the assumption $p \in \Delta_n$ and taking Zariski closure, we obtain a toric variety, whose vanishing ideal is the kernel of the monomial map
$$\C[p_1,\dots, p_n]  \rightarrow \C[\theta_1,\dots,\theta_m]: 
 p_j  \mapsto  \prod_{i=1}^m \theta_i^{a_{ij}}.$$
Explicitly, this toric ideal is $$I_A= \langle p^{u} - p^{v} \mid u - v \in \mathrm{ker}_\Z (A) \rangle$$ where $p^u$ is the usual shorthand for $\prod_{i=1}^r p_i^{u_i}$~\cite[Corollary 4.3]{sturmfels1996grobner}. 

\begin{proposition}\cite[Proposition 1.2.9]{lectures}
Let $G$ be an undirected graph and $\mathbf{d}\in\mathbb{N}^{|V(G)|}$ be the list of the number of states of the random variable at each node. The undirected graphical model $\M_G$ is the log-linear model specified by the $0/1$~matrix $A_{G,\mathbf{d}}$, whose rows are labeled by 
\[\{\theta^{(C)}_{i_C} | C \in \mathcal{C}(G), i_C \in \mathcal{R}_C \}
\]
and columns are labeled by $\{p_j: j\in\mathcal{R}\}$. The entry $\Big(\theta^{(C)}_{i_C},p_j\Big)$\ is 1 whenever $j_C=i_C$ and 0 otherwise.
\end{proposition}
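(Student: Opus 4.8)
The plan is to verify directly that the monomial map encoded by $A_{G,\mathbf{d}}$ coincides, after relabeling parameters, with the parametrization $\phi^G$ from the definition of $\M_G$, and then to reconcile the two resulting descriptions of a subset of the open simplex. First I would check that $A_{G,\mathbf{d}}$ genuinely specifies a log-linear model in the sense recalled above, i.e.\ that the all-ones vector lies in its rowspan. Fix any maximal clique $C\in\mathcal{C}(G)$ and sum the rows indexed by $\theta^{(C)}_{i_C}$ over all $i_C\in\mathcal{R}_C$. In a fixed column $p_j$, exactly one of these rows is nonzero, namely the one with $i_C=j_C$, which contributes a $1$; hence the sum is the all-ones vector. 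This uses only that $\mathcal{C}(G)\neq\emptyset$.

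Next I would identify the monomial map (equivalently, the toric ideal). By the definition of $A_{G,\mathbf{d}}$, the column $p_j$ is sent to $\prod_{C\in\mathcal{C}(G)}\prod_{i_C\in\mathcal{R}_C}\bigl(\theta^{(C)}_{i_C}\bigr)^{a}$, where the exponent $a$ equals $1$ if $i_C=j_C$ and $0$ otherwise. For each clique $C$ the inner product collapses to the single surviving factor $\theta^{(C)}_{j_C}$, so $p_j$ maps to $\prod_{C\in\mathcal{C}(G)}\theta^{(C)}_{j_C}$, which is exactly $\phi^G_j(\theta)$. Thus the monomial map attached to $A_{G,\mathbf{d}}$ and the map $\phi^G$ agree, and the toric ideal of $\M_{A_{G,\mathbf{d}}}$ is $I_{A_{G,\mathbf{d}}}=\langle p^{u}-p^{v}\mid u-v\in\mathrm{ker}_\Z(A_{G,\mathbf{d}})\rangle$ by the formula quoted above; this is precisely $I_G$.

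Finally I would match the two descriptions of the model itself. On the log-linear side, $\log p\in\mathrm{rowspan}(A_{G,\mathbf{d}})$ means $\log p_j=\sum_i \theta_i a_{ij}$ for some real vector $\theta$, which on exponentiating is equivalent to $p_j=\prod_i \lambda_i^{a_{ij}}$ for strictly positive $\lambda_i=e^{\theta_i}$; hence $\M_{A_{G,\mathbf{d}}}$ is the image of the monomial map restricted to positive parameters, intersected with $\Delta_{|\mathcal{R}|}$. On the graphical side, $\M_G=\phi^G(\C^m)\cap\Delta_{|\mathcal{R}|}$, and since every point of $\Delta_{|\mathcal{R}|}$ has strictly positive coordinates, any such point lying in the image of $\phi^G$ is realized by strictly positive parameters (take componentwise logarithms). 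Combining this with the identification of the monomial maps from the previous step yields $\M_G=\M_{A_{G,\mathbf{d}}}$.

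There is no substantial obstacle: the argument is bookkeeping with the index sets $\mathcal{R}_C$ and $\mathcal{R}$. The only point that deserves a sentence of care is the passage between the complex parameter space $\C^m$ used to define $\M_G$ and the positive real parameters implicit in the log-linear description; intersecting with the open simplex forces positivity on both sides, so the two models coincide on the nose rather than merely sharing a Zariski closure.
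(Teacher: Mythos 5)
The paper states this proposition as a citation to \cite{lectures} and gives no proof, so there is nothing internal to compare against; your direct verification is the natural argument and it is essentially correct. The first two steps are exactly right: summing, for a fixed maximal clique $C$, the rows $\theta^{(C)}_{i_C}$ over $i_C\in\mathcal{R}_C$ hits each column exactly once, so $\mathbf{1}\in\mathrm{rowspan}(A_{G,\mathbf{d}})$, and the column of $A_{G,\mathbf{d}}$ indexed by $j$ encodes precisely the monomial $\prod_{C\in\mathcal{C}(G)}\theta^{(C)}_{j_C}=\phi^G_j(\theta)$. The only place you should tighten the wording is the final positivity step. Saying that a point $p\in\phi^G(\C^m)\cap\Delta_{|\mathcal{R}|}$ ``is realized by strictly positive parameters (take componentwise logarithms)'' is not quite a proof: the realizing parameters $\theta$ are a priori complex (e.g.\ two factors could both be negative or non-real and still multiply to a positive number), so one cannot take real logarithms of them. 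The clean fix is one line: first note that every parameter $\theta^{(C)}_{i_C}$ divides the monomial $p_j$ for any $j$ with $j_C=i_C$, so positivity of all $p_j$ forces all parameters to be nonzero; then replace $\theta$ by its componentwise absolute value and use that $\phi^G$ is monomial, so $\phi^G_j(|\theta|)=\bigl|\phi^G_j(\theta)\bigr|=|p_j|=p_j$. This exhibits $p$ in $\phi^G(\R_{>0}^m)$, whence $\log p=A_{G,\mathbf{d}}^{\top}\!\log|\theta|$ lies in the real rowspan, and the two descriptions of the model coincide as you claim. With that substitution your argument is complete.
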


The matrix $A_{G,\mathbf{d}}$ is known as the \emph{$A$-matrix} of $\M_G$ the structure of the $A$-matrix often reveals insights into the model's properties. For example, the rank of the $A$-matrix is one more than the dimension of the model. To each toric variety, we can also associate a polytope, given by the convex hull of the columns of the $A$-matrix. We will denote this polytope by $\text{conv}(A)$.

\begin{example}
Consider the graphical model given by $P_3 \sqcup P_1$, that is, the path $X_1 - X_2 - X_3$ together with an isolated variable $X_4$. If all four variables are binary, the $A$-matrix of $\M_G$ is of the form \looseness=-1
\[ {\scriptsize
    A_{G,\mathbf{d}} = \begin{blockarray}          {cp{0.3cm}p{0.3cm}p{0.3cm}p{0.3cm}p{0.3cm}p{0.3cm}p{0.3cm}p{0.3cm}p{0.3cm}p{0.3cm}p{0.3cm}p{0.3cm}p{0.3cm}p{0.3cm}p{0.3cm}p{0.3cm}}
         & 1111 & 1112 & 1121 & 1122 & 1211 & 1212 & 1221 & 1222 & 2111 & 2112 & 2121 & 2122 & 2211 & 2212 & 2221 & 2222 \\
        \begin{block}{c[cccccccccccccccc]}
        11\cdot\cdot &1&1&1&1&0&0&0&0 &0&0&0&0&0&0&0&0 \\
        12\cdot\cdot &0&0&0&0&1&1&1&1 &0&0&0&0&0&0&0&0\\
        21\cdot\cdot &0&0&0&0&0&0&0&0 &1&1&1&1&0&0&0&0\\
        22\cdot\cdot &0&0&0&0&0&0&0&0 &0&0&0&0&1&1&1&1\\
        \BAhline
        \cdot11\cdot &1&1&0&0&0&0&0&0 &1&1&0&0&0&0&0&0\\
        \cdot12\cdot &0&0&1&1&0&0&0&0 &0&0&1&1&0&0&0&0\\
        \cdot21\cdot &0&0&0&0&1&1&0&0 &0&0&0&0&1&1&0&0\\
        \cdot22\cdot &0&0&0&0&0&0&1&1 &0&0&0&0&0&0&1&1\\
        \BAhline
        \cdot\cdot\cdot1 &1&0&1&0&1&0&1&0 &1&0&1&0&1&0&1&0\\ 
        \cdot\cdot\cdot2 &0&1&0&1&0&1&0&1 &0&1&0&1&0&1&0&1 \\
        \end{block}
    \end{blockarray}}.
\]
Here, the row blocks correspond to the maximal cliques, i.e., the two edges and the isolated node. Within each block, the rows are labeled by the states of the random variables at each clique. The columns are labeled by the states in $\mathcal{R}$. The associated polytope $\text{conv}(A)$ is a 6-dimensional polytope in $\mathbb{R}^{10}$ with the $f$-vector $(16, 56, 92, 82, 40, 10)$.
\end{example}

\subsection{Mixture Models and Secant Varieties}
Let $\M\subset\Delta_{k}$ be a discrete statistical model. The \textit{$r$-th mixture model} of $\M$, denoted by $\text{Mixt}^r(\M)$, is defined as the set of all linear combinations of any $r$ distributions in $\M$. As in \cite[Chapter 14]{sullivant2018algebraic}, we formally have
\[
    \text{Mixt}^r(\mathcal{M}):=\{\pi_1 p^1 + \cdots + \pi_r p^r: (\pi_1,\ldots,\pi_r) \in \Delta_{r} \text{ and } p^1,\ldots,p^r \in \mathcal{M}\}.
\]
Mixtures of undirected graphical models admit a parametrization induced by the original models. Given a graph $G$, the mixture model $\text{Mixt}^r(\mathcal{M}_G)$ is parametrized as 
\begin{equation} \label{eq:parametrization}
    p_{i} = \sum_{j=1}^r \prod_{C \in \mathcal{C}(G)} \theta_{i_C}^{(j,C)}
\end{equation}
for all $p \in \text{Mixt}^r(\mathcal{M}_G)$. We note that since each $\theta_{i_C}^{(j,C)}$ is allowed to range over $\C$ before intersecting with the probability simplex, it is not necessary to include the mixing parameters $\pi$. As outlined in the introduction, mixture models can be seen as models with one latent variable $X_{\ell}$ with state space $[r]$ such that $P(X_{\ell} = j) = \pi_j$. The index $j$ in the parametrization~\eqref{eq:parametrization} corresponds to the state of this latent variable. Our first observation is that mixtures of graphical models are equal to latent variable \emph{graphical models}, where the latent variable is connected to each observed variable. This result is well-known in the statistics and algebraic statistics communities. We include its proof here for completeness.

\begin{proposition}
    Let $r \geq 1$ be an integer and denote by $X_{\ell}$ a (unobserved) random variable with state space $[r]$. Then, the $r$-mixture graphical model $\text{Mixt}^r(\mathcal{M}_G)$ is equal to the latent variable model corresponding to the graph $G_{\ell}=(V \cup \{\ell\}, E_{\ell})$, where we add one latent node $\ell$ and connect it to each observed node $v \in V$, that is, $E_{\ell}=E \cup \{\{v,\ell\}: v \in V\}$. 
\end{proposition}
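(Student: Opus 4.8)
The plan is to reduce the statement to a direct comparison of the two monomial parametrizations: the one in~\eqref{eq:parametrization} for $\text{Mixt}^r(\M_G)$, and the one defining $\M_{G_\ell}$ after marginalizing out the latent coordinate.

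\textbf{Step 1: the maximal cliques of $G_\ell$.} Since $\ell$ is adjacent to every node of $V$, a subset $K \cup \{\ell\}$ with $K \subseteq V$ is a clique of $G_\ell$ if and only if $K$ is a clique of $G$, and a subset of $V$ is a clique of $G_\ell$ if and only if it is a clique of $G$. Hence every clique of $G_\ell$ is contained in $C \cup \{\ell\}$ for some $C \in \mathcal{C}(G)$, and each such $C \cup \{\ell\}$ is itself a clique that is maximal (enlarging it by some $w \in V \setminus C$ would enlarge $C$ within $G$). Therefore $\mathcal{C}(G_\ell) = \{\, C \cup \{\ell\} : C \in \mathcal{C}(G)\,\}$, and the monomial parametrization of $\M_{G_\ell}$ on the state space $\mathcal{R} \times [r]$ reads $q_{i,j} = \prod_{C \in \mathcal{C}(G)} \eta^{(C \cup \{\ell\})}_{(i_C,\,j)}$.

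\textbf{Step 2: marginalize and match parametrizations.} By definition the latent variable model associated to $G_\ell$ is $\{\, p \in \Delta_{|\mathcal{R}|} : p_i = \sum_{j=1}^r q_{i,j} \text{ for some } q \in \M_{G_\ell}\,\}$. Marginalizing the parametrization from Step~1 gives $p_i = \sum_{j=1}^r \prod_{C \in \mathcal{C}(G)} \eta^{(C \cup \{\ell\})}_{(i_C,\,j)}$, which is formally identical to~\eqref{eq:parametrization} under the relabeling $\eta^{(C \cup \{\ell\})}_{(i_C,\,j)} \leftrightarrow \theta^{(j,C)}_{i_C}$. So the two sets are cut out of $\Delta_{|\mathcal{R}|}$ by the same family of polynomial parametrizations; the only remaining point is that they agree as sets of honest positive probability vectors, not merely as Zariski closures, and in particular that the passage to and from the mixing weights $\pi$ causes no loss.

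\textbf{Step 3: the two inclusions.} Given $q \in \M_{G_\ell}$, set $\pi_j := \sum_i q_{i,j}$; positivity of $q$ forces $\pi \in \Delta_r$, and $p^j_i := q_{i,j}/\pi_j$ lies in $\M_G$ — absorbing the scalar $1/\pi_j$ into one clique factor shows $p^j \in \phi^G(\C^m)$, and $p^j \in \Delta_{|\mathcal{R}|}$ by construction — while $\sum_j q_{i,j} = \sum_j \pi_j p^j_i$, so the marginal lies in $\text{Mixt}^r(\M_G)$. Conversely, given $\pi \in \Delta_r$ and $p^1,\dots,p^r \in \M_G$ with $p^j_i = \prod_C \theta^{(j,C)}_{i_C}$, set $q_{i,j} := \pi_j p^j_i$; this lies in $\Delta_{|\mathcal{R}| r}$, and absorbing $\pi_j$ into the factor indexed by a fixed clique $C_0$ exhibits $q \in \phi^{G_\ell}(\C^{m_\ell})$, hence $q \in \M_{G_\ell}$, with $\sum_j q_{i,j} = p_i$. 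I do not expect a genuine obstacle: the real content is the clique computation in Step~1, and the only place requiring care is the bookkeeping in Step~3, namely absorbing the mixing weights into complex clique parameters and recovering them as positive marginals, so that the simplex constraints on both sides line up exactly.
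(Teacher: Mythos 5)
Your proposal is correct and follows essentially the same route as the paper: identify $\mathcal{C}(G_\ell)$ as $\{C \cup \{\ell\} : C \in \mathcal{C}(G)\}$, marginalize the resulting monomial parametrization over the latent state, and match it with the parametrization~\eqref{eq:parametrization} of $\mathrm{Mixt}^r(\M_G)$. Your Step~3 spells out the set-level inclusions inside the simplex (absorbing the mixing weights into clique parameters and recovering them as positive marginals) more explicitly than the paper, which simply asserts that equal parametrizations give equal models, but this is added care rather than a different argument.
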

\begin{proof}
Let $\mathcal{M}_{G_{\ell}}$ be the fully observed graphical model corresponding to the graph $G_{\ell}$. There is a one-to-one correspondence between the maximal cliques of $G$ and the maximal cliques of $G_{\ell}$. Each maximal clique $C \in \mathcal{C}(G)$ corresponds to the maximal clique $C \cup \{\ell\} \in \mathcal{C}(G_{\ell})$. In particular, node $\ell$ is contained in any maximal clique of the graph $G_{\ell}$. Thus, the model $\mathcal{M}_{G_{\ell}}$ is parametrized as
\[
p_{i} = \prod_{C \in \mathcal{C}(G)} \theta_{i_{C}, i_{\ell}}^{(C)} 
\]
for all $p \in \M_{G_{\ell}}$. Now, the latent variable model, where $X_{\ell}$ is unobserved, consists of the marginal distributions of $X_V$ given by 
\[
p_{i_V} = \sum_{i_{\ell} = 1}^{r} p_{i_V, i_{\ell}} = \sum_{i_{\ell} = 1}^r \prod_{C \in \mathcal{C}(G)} \theta_{i_{C}, i_{\ell}}^{(C)}.
\]
Comparing with the parametrization of the mixture model $\text{Mixt}^r(\mathcal{M}_G)$ in \eqref{eq:parametrization}, we observe that the parametrizations, and hence the models, are equal.
\end{proof}

\begin{figure}[t]
\centering
(a)
\begin{tikzpicture}[align=center]
    \node[] (1) at (-1.1,0) {$1$};
    \node[] (2) at (0,0) {$2$};
    \node[] (3) at (1.1,0) {$3$};
    
    \draw[MyBlue] (1) edge (2);
    \draw[MyBlue] (2) edge (3);
\end{tikzpicture}
\hspace{2cm}
(b)
\begin{tikzpicture}[align=center]
    \node[] (1) at (-1.1,0) {$1$};
    \node[] (2) at (0,0) {$2$};
    \node[] (3) at (1.1,0) {$3$};
    \node[fill=lightgray] (4) at (0,1.1) {$4$};
    
    \draw[MyBlue] (1) edge (2);
    \draw[MyBlue] (2) edge (3);
    \draw[MyRed] (4) edge (1);
    \draw[MyRed] (4) edge (2);
    \draw[MyRed] (4) edge (3);
\end{tikzpicture}
\caption{The $3$-path and the corresponding graph with a latent node (gray).}
\label{fig:3-path}
\end{figure}
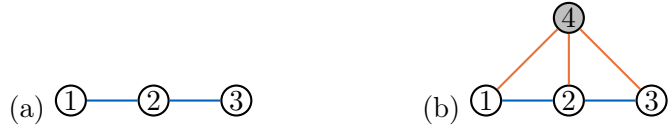

\begin{example}
Consider the $r$-mixture of the graphical model given by the graph in Figure \ref{fig:3-path} (a). The mixture model is equal to the latent variable model associated to the graph in Figure \ref{fig:3-path} (b) with latent node $\ell=4$, where the latent variable $X_4$ takes values in $[r]$.
\end{example}

On the algebra side, mixture models correspond to secant varieties. The \textit{$r$-th secant variety} of some variety $V$, denoted by $\text{Sec}^r(V)$, is the Zariski closure of the union of all affine linear planes spanned by any $r$ points in $V$; that is,
\[
    \text{Sec}^r(V):=\overline{\{\alpha_1 v^1 + \cdots + \alpha_r v^r : \alpha_1 + \cdots + \alpha_r=1 \text{ and } v^1, \ldots, v^r \in V\}}.
\]
Note that the Zariski closure of the mixture model equals the secant of the Zariski closure of the original model, that is, $\overline{\text{Mixt}^r(\mathcal{M})} = \text{Sec}^r(\overline{\mathcal{M}})$. We study vanishing ideals, dimensions, and maximum likelihood degrees of $\text{Mixt}^r(\M_G)$ for decomposable graphical models $\M_G$. 

\begin{definition}\label{def:expected-dim-defective}
The \textit{expected dimension} of the mixture model $\text{Mixt}^r(\M_G)$ is $r\cdot\dim(\M_G)+r-1$. If the dimension of a mixture model is less than the expected dimension, it is said to be \textit{defective}. 
\end{definition}

The following theorem will be used in Section \ref{sec:dimension} to compute dimensions of 2-mixtures.

\begin{theorem}\cite[Corollary 2.3]{draisma}\label{thm:Draisma}
Let $V_A$ be the toric variety specified by integer matrix $A \in \Z^{d\times n}$. Let $\bfh \in (\R^d)^*$. Let $A_+$ be the matrix consisting of columns $\bfa$ of $A$ such that $\bfh \cdot \bfa > 0$. Similarly, $A_-$ consists of the columns of $A$ such that $\bfh \cdot \bfa < 0$. Then 
        \[\mathrm{dim} \big(\mathrm{Sec}^2(V_A)\big) \geq \mathrm{rank} (A_+) + \mathrm{rank} (A_-) - 1. \]
    \end{theorem}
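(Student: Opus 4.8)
The plan is to realize $\mathrm{Sec}^2(V_A)$ through an explicit monomial‑sum parametrization and then to degenerate it using the one‑parameter torus action determined by $\bfh$, arranging the degeneration so that the Jacobian of the limiting map decouples along the columns of $A_+$ and $A_-$. First I would pass to affine cones: since $\mathbf{1}\in\mathrm{rowspan}(A)$, the affine cone $\hat V_A\subseteq\C^n$ over $V_A$ is scaling‑invariant, so $\mathrm{Sec}^2(V_A)$ has affine cone $\overline{\hat V_A+\hat V_A}$ and $\dim\mathrm{Sec}^2(V_A)=\dim\overline{\hat V_A+\hat V_A}-1$; it therefore suffices to prove $\dim\overline{\hat V_A+\hat V_A}\ge\mathrm{rank}(A_+)+\mathrm{rank}(A_-)$. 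Writing $a_1,\dots,a_n\in\Z^d$ for the columns of $A$ and $\phi(t)_i=t^{a_i}$ for the monomial map parametrizing $\hat V_A$, the variety $\overline{\hat V_A+\hat V_A}$ is the Zariski closure of the image of $\Phi\colon(\C^*)^{2d}\to\C^n$, $\Phi(s,t)_i=s^{a_i}+t^{a_i}$, and hence has dimension equal to $\max_{(s,t)\in(\C^*)^{2d}}\mathrm{rank}\,D\Phi(s,t)=:D$.

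For the degeneration, set $h_i:=\bfh\cdot a_i$ and, for real $\tau>0$, define $\Psi_\tau\colon(\C^*)^{2d}\to\C^n$ by $\Psi_\tau(s,t)_i:=\tau^{2\max(h_i,0)}s^{a_i}+\tau^{2\max(-h_i,0)}t^{a_i}$. Pulling the factor $\tau^{2\max(-h_i,0)}$ out of coordinate $i$ and substituting $s_j\mapsto\tau^{2\bfh_j}s_j$ presents $\Psi_\tau$ as $\Phi$ composed with an invertible diagonal rescaling of the target and an automorphism of $(\C^*)^{2d}$; hence $\max_{(s,t)}\mathrm{rank}\,D\Psi_\tau(s,t)=D$ for every fixed $\tau>0$. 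On the other hand every exponent of $\tau$ occurring in $\Psi_\tau$ is nonnegative, so $\Psi_\tau$ and $D\Psi_\tau$ extend continuously to $\tau=0$, with limit $\Psi_0$ satisfying $\Psi_0(s,t)_i=t^{a_i}$ when $h_i>0$, $\Psi_0(s,t)_i=s^{a_i}$ when $h_i<0$, and $\Psi_0(s,t)_i=s^{a_i}+t^{a_i}$ when $h_i=0$.

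Now fix any $(s_0,t_0)\in(\C^*)^{2d}$. Lower semicontinuity of matrix rank gives $\mathrm{rank}\,D\Psi_0(s_0,t_0)\le\mathrm{rank}\,D\Psi_\tau(s_0,t_0)\le D$ for all sufficiently small $\tau>0$. It remains to bound $\mathrm{rank}\,D\Psi_0(s_0,t_0)$ from below: grouping the rows of $D\Psi_0(s_0,t_0)$ by the sign of $h_i$, the rows with $h_i<0$ are supported on the $s$‑columns and, after removing the invertible diagonal factors $\mathrm{diag}(s_0^{a_i})$ and $\mathrm{diag}(1/s_{0,j})$, equal $A_-^{\top}$, while the rows with $h_i>0$ are supported on the disjoint set of $t$‑columns and similarly reduce to $A_+^{\top}$. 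Hence the submatrix of $D\Psi_0(s_0,t_0)$ on the rows with $h_i\ne 0$ is block diagonal of rank exactly $\mathrm{rank}(A_+)+\mathrm{rank}(A_-)$, and the rows with $h_i=0$ can only raise the rank. Combining, $D\ge\mathrm{rank}(A_+)+\mathrm{rank}(A_-)$, which, recalling $\dim\mathrm{Sec}^2(V_A)=\dim\overline{\hat V_A+\hat V_A}-1$, is the asserted inequality.

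I expect the crux to be the choice of rescaling in the degeneration step. The naive one‑parameter subgroup $\mathrm{diag}(\tau^{h_i})$ does not work: the coordinates on which $\bfh$ is positive and those on which it is negative blow up in opposite directions, so no finite limit exists. The remedy is the asymmetric, sign‑folding rescaling above, which is still invertible for $\tau>0$ — hence preserves the Jacobian rank — yet in the limit keeps only the $t$‑term in each $A_+$‑coordinate and only the $s$‑term in each $A_-$‑coordinate, and this is precisely what decouples the Jacobian. Finding this rescaling, together with the routine (but easy to botch) bookkeeping between $V_A$ and its affine cone responsible for the $-1$, is the substance of the argument; everything else is semicontinuity of rank and elementary linear algebra, and is in essence the flat/tropical degeneration underlying Draisma's proof.
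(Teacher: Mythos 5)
The paper does not prove this statement---it is quoted directly from Draisma \cite[Corollary 2.3]{draisma}---so there is no in-paper proof to compare against. Your argument is correct and is essentially a self-contained reconstruction of Draisma's degeneration proof: the cone/slice bookkeeping giving $\dim \mathrm{Sec}^2(V_A)=\dim\overline{\hat V_A+\hat V_A}-1$, the sign-folded rescaling $\tau^{2\max(\pm h_i,0)}$ (which, as you note, is invertible for $\tau>0$ via the substitution $s_j\mapsto \tau^{2h_j}s_j$ and a diagonal rescaling of the target, hence rank-preserving), the lower semicontinuity of rank at $\tau=0$, and the block-diagonal limiting Jacobian of rank $\mathrm{rank}(A_+)+\mathrm{rank}(A_-)$ all check out.
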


In particular, if we can separate the vertices of $\text{conv}(A)$ with a hyperplane so that the columns on either side both have full rank, then the secant has the expected dimension. Towards this end, we introduce the following definition.

\begin{definition} \label{def:SeparatingHyperplane}
    Let $A$ be an integer matrix with $d$ rows. Let $\bfh \in \big(\R^d\big)^\ast$. Adopting the notation in the statement of Theorem~\ref{thm:Draisma}, we say that $\bfh$ is a \emph{separating hyperplane} for $A$ if 
    \[
    \rank(A) = \rank(A_+) = \rank(A_-).
    \]
\end{definition}

\begin{example}
Consider the graphical model given by the graph $P_3 \sqcup P_1$ in Figure \ref{fig:running-example} (c), where each of the four random variables are binary. The hyperplane 
   \[
    \bfh = (1, 1, -1, 1 \ | \  -1, 1, -1, -1 \ | \ -1, 1)
    \]
is a separating hyperplane for the $A$-matrix of $\M_{P_3 \sqcup P_1}$, since in this case $\rk(A)=\rk(A_+)=\rk(A_-)=7$. On the other hand, the hyperplane
   \[
    \bfh = (1, -1, -1, 1 \ | \  -1, 1, -1, -1 \ | \ -1, 1).
    \]
is not separating, since $\rk(A_+)=6<\rk(A)$.
\end{example}

\section{Ideals of Secant Varieties} \label{sec:clique stars}

When we consider the $r$-th mixture of the model $\M_G\subseteq\Delta_n$, we are interested in the set of all polynomials that vanish on the distributions in the model. This mixture model $\Mixt^r(\M_G)$ lives inside the probability simplex $\Delta_{n}$, so it is a semialgebraic set. Taking the Zariski closure, we obtain the secant variety $\Sec^r(\overline{\M_G})$. Hence, the set of all polynomails that vanish on the distributions in $\Mixt^r(\M_G)$ is precisely the vanishing ideal $I_G^{(r)}$ of the secant variety, and the additional equation $p_1+p_2+\cdots+p_{n+1} = 1$ defining the simplex. Since we are interested in the equations of the prime ideal defining our model in complex space $\mathbb{C}^{n+1}$, we may relax the extra assumption that the coordinates have to sum to one, and only consider the ideal of the secant variety. This means that the dimension of the mixture model inside the probability simplex is the dimension of $I_G^{(r)}$ minus~one.

The next result generalizes Lemma~\ref{lem:2x2-minors} to the setting of secant varieties with $r\geq 2$. Let $I_{j_S; A\indep B}^{(r)}$ denote the ideal of all $(r+1)\times(r+1)$ minors of the matrix $M^{(r)}_{j_S; A\independent B}$, whose entries are the probabilities $p_{i_Ai_Bj_S+}$ with respect to $\text{Mixt}^r(\M_G)$ for every fixed value $j_S\in\mathcal{R}_S$. \looseness=-1

\begin{proposition}\label{prop:markov-ideal-containment}
The ideal $I_G^{(r)}$ contains 
\[
    \sum_{A \perp_{S} B \in G} \sum_{j_S\in \mathcal{R}_S} I_{j_S; A\indep B}^{(r)}.
\]
\end{proposition}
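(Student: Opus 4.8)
The plan is to fix a single separation statement $A\perp_S B\in G$ and a single state $j_S\in\mathcal{R}_S$, and to show that every $(r+1)\times(r+1)$ minor of $M^{(r)}_{j_S; A\indep B}$ vanishes on the parametrized mixture model $\Mixt^r(\M_G)$; since $I_G^{(r)}$ is the vanishing ideal of the Zariski closure of this set, and vanishing ideals are closed under sums, this suffices. Because the minors are polynomials, it is enough to check that they vanish on the image of the parametrization~\eqref{eq:parametrization} before intersecting with the simplex, i.e.\ on all $p$ of the form $p_i = \sum_{j=1}^r \prod_{C\in\mathcal{C}(G)}\theta_{i_C}^{(j,C)}$.

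The key step is to exhibit the matrix $M^{(r)}_{j_S; A\indep B}$ as a sum of $r$ rank-one matrices, one per mixture component, thereby bounding its rank by $r$. Concretely, for the $j$-th component let $p^j$ denote the corresponding point of $\M_G$, i.e.\ $p^j_i = \prod_{C\in\mathcal{C}(G)}\theta_{i_C}^{(j,C)}$. Then $M^{(r)}_{j_S;A\indep B} = \sum_{j=1}^r M^{j}_{j_S;A\indep B}$, where $M^{j}_{j_S;A\indep B}$ is the corresponding matrix built from $p^j$. Each $p^j$ lies in $\M_G$, which is a graphical model for the \emph{same} graph $G$, so it satisfies the global Markov property and in particular the conditional independence statement $X_A\indep X_B\mid X_S$. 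By the same computation underlying Lemma~\ref{lem:2x2-minors} — namely equation~\eqref{eq:ci-relations-binomials}, which says $p^j_{i_Ai_Bi_S+}\,p^j_{j_Aj_Bi_S+} = p^j_{i_Aj_Bi_S+}\,p^j_{j_Ai_Bi_S+}$ for all choices of indices — the matrix $M^{j}_{j_S;A\indep B}$ has all $2\times 2$ minors equal to zero, hence has rank at most~$1$. (One must be slightly careful: the marginalized probability $p^j_{i_Ai_Bj_S+}$ sums over $V\setminus(A\cup B\cup S)$, and the conditional independence statement as stated in~\eqref{eq:ci-relations-binomials} is exactly about these marginals, so no extra argument is needed; if $A\cup S\cup B\neq V$ this still works verbatim.) Therefore $M^{(r)}_{j_S;A\indep B}$ is a sum of $r$ matrices of rank at most one, so $\rank\big(M^{(r)}_{j_S;A\indep B}\big)\le r$ on the image of the parametrization, and all its $(r+1)\times(r+1)$ minors vanish there.

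Finally I would assemble the pieces: since each generator of each $I_{j_S;A\indep B}^{(r)}$ vanishes on the parametrized model $\Mixt^r(\M_G)$, it vanishes on its Zariski closure $\Sec^r(\overline{\M_G})$, hence lies in the prime ideal $I_G^{(r)}$. As $I_G^{(r)}$ is an ideal, it contains the sum $\sum_{A\perp_S B\in G}\sum_{j_S\in\mathcal{R}_S} I_{j_S;A\indep B}^{(r)}$, which is the claim. I do not expect any genuine obstacle here; the only point requiring mild care is the bookkeeping of marginalization in~\eqref{eq:ci-relations-binomials} and the observation that it is really the matrix $M^j$ (not some submatrix) whose $2\times 2$ minors vanish, so that the rank-one claim is literally correct rather than only up to the block structure.
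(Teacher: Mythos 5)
Your proposal is correct and follows essentially the same route as the paper: fix one separation statement and one state $j_S$, write $M^{(r)}_{j_S;A\indep B}$ as a sum of $r$ matrices, one per mixture component, each of which has vanishing $2\times 2$ minors by Lemma~\ref{lem:2x2-minors}, and conclude that the rank is at most $r$ so all $(r+1)\times(r+1)$ minors vanish on the parametrization. The only cosmetic difference is that the paper phrases the decomposition as an exchange of the two summations in the explicit parametrization of $p_{i_Ai_Bj_S+}$, whereas you name the component points $p^j$ directly; the content is identical.
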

\begin{proof}
Fix $A\perp_S B\in G$ and $j_S\in \mathcal{R}_S$.  We claim that all minors in $I_{j_S; A\indep B}^{(r)}$ vanish on the parametrization of $\text{Sec}^r(\overline{\M_G})$. Each of the entries $p_{i_Ai_Bj_S+}$ is parametrized as
$$p_{i_Ai_Bj_S+}=\sum_{j_{V\setminus(A\cup B\cup S)}\in \mathcal{R}_{V\setminus (A \cup B \cup S)}}\sum_{\ell=1}^r\left(\prod_{C\in\mathcal{C}(G)}\theta^{(\ell,C)}_{i_{A\cap C}i_{B\cap C}j_{S\cap C}j_{(V\setminus (A\cup B\cup S))\cap C}}\right).$$
Exchanging the order of two summations and applying Lemma $\ref{lem:2x2-minors}$, we find that the matrix $M^{(r)}_{j_S; A\indep B}$ is a sum of $r$ matrices $M_{j_S; A\indep B}$ of rank one. Hence, $M^{(r)}_{j_S; A\indep B}$ has rank at most $r$. We conclude that all $(r+1)\times (r+1)$ minors vanish on the parametrization, as desired.
\end{proof}
\subsection{Clique Stars}
In this section, we focus on a special family of decomposable graphs, which we call clique stars. 
\begin{definition} \label{def:CliqueStar}
A graph $G$ is a \emph{clique star} if it is a union of maximal cliques, $G = \cup_{i=1}^k \widetilde{C}_i,$ and there is a clique $S$ such that $\widetilde{C}_i \cap \widetilde{C}_j = S$ for all $i \neq j$. In this case, we write $C_i = \widetilde{C}_i \setminus S$ and the vertex set of $G$ is the disjoint union, $C_1 \cup \dots \cup C_k \cup S$.
\end{definition}
The graphs below are examples of clique stars. 
\begin{center}
\tikzset{
      every node/.style={circle, inner sep=0.1mm, minimum size=4pt, draw, thick, black, fill=black, text=black},
      every path/.style={thick}
}
\begin{minipage}{.2\textwidth}
\begin{tikzpicture}[align=center]
    \node[] (1) at (0,0) {};
    \node[] (2) at (0,1.0) {};
    \node[] (3) at (-0.85,0.55) {};
    \node[] (4) at (-0.85,-0.55) {};
    \node[] (5) at (0.85,0.55) {};
    \node[] (6) at (0.85,-0.55) {};
    
    \draw[MyBlue] (1) edge (2);
    \draw[MyBlue] (1) edge (3);
    \draw[MyBlue] (1) edge (4);
    \draw[MyBlue] (1) edge (5);
    \draw[MyBlue] (1) edge (6);
    \draw[MyBlue] (3) edge (4);
    \draw[MyBlue] (5) edge (6);
\end{tikzpicture}
\end{minipage}
\begin{minipage}{.2\textwidth}
\begin{tikzpicture}[align=center]
    \node[] (1) at (-1.1,0) {};
    \node[] (2) at (0,0.9) {};
    \node[] (3) at (0,-0.9) {};
    \node[] (4) at (1.1,0) {};
    
    \draw[MyBlue] (1) edge (2);
    \draw[MyBlue] (1) edge (3);
    \draw[MyBlue] (2) edge (4);
    \draw[MyBlue] (3) edge (4);
    \draw[MyBlue] (2) edge (3);
\end{tikzpicture}
\end{minipage}
\begin{minipage}{.2\textwidth}
\begin{tikzpicture}[align=center]
    \node[] (1) at (-1,0) {};
    \node[] (2) at (0,0) {};
    \node[] (3) at (1,0) {};
    
    \draw[MyBlue] (1) edge (2);
    \draw[MyBlue] (2) edge (3);
\end{tikzpicture}
\end{minipage}
\end{center}
Note that all graphs with two maximal cliques are clique stars. Moreover, graphs with an empty set of edges corresponding to the independence model are also clique stars, since we do not exclude $S=\emptyset$ in Definition \ref{def:CliqueStar}. For any clique star $G$, the following theorem expresses the dimension of $\text{Mixt}^r(\mathcal{M}_G)$ in terms of dimensions of the sets of certain rank $r$ tensors.

\begin{theorem}\label{thm:dim-clique stars}
    Let $G=(C_1 \cup \cdots \cup C_k \cup S, E)$ be a clique star. Then
    \[
    \dim\big(\mathrm{Mixt}^r(\mathcal{M}_G)\big) = \min\left\{d_S  \cdot \dim\Big(\mathcal{T}^{(r)}_{d_{C_1} \times \cdots \times d_{C_k}}\Big) - 1, \prod_{v \in V} d_v - 1 
    \right\},
    \]
    where $\mathcal{T}^{(r)}_{d_{C_1} \times \cdots \times d_{C_k}}$ is the set of $d_{C_1} \times \cdots \times d_{C_k}$ tensors of rank at most $r$.
\end{theorem}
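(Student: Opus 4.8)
The plan is to decompose the parametrization of $\mathrm{Mixt}^r(\mathcal{M}_G)$ for a clique star $G$ into $d_S$ independent pieces, one for each state $j_S \in \mathcal{R}_S$, and to recognize each piece as (the affine cone over) the set of $d_{C_1} \times \cdots \times d_{C_k}$ tensors of rank at most $r$. Since $G = \bigcup_{i=1}^k \widetilde C_i$ with $\widetilde C_i = C_i \cup S$ and $\widetilde C_i \cap \widetilde C_j = S$, the maximal cliques of $G$ are exactly the $\widetilde C_i$ (assuming $k \ge 2$; the degenerate cases $k=1$ and the independence model $S = \emptyset$ should be handled separately but are easy). The mixture parametrization \eqref{eq:parametrization} then reads
\[
p_{i_{C_1} \cdots i_{C_k} i_S} \;=\; \sum_{\ell=1}^r \prod_{i=1}^k \theta^{(\ell,\widetilde C_i)}_{i_{C_i}\, i_S}.
\]
The key observation is that for each \emph{fixed} value of $i_S = j_S$, the right-hand side only involves the parameters $\theta^{(\ell,\widetilde C_i)}_{i_{C_i}\, j_S}$, and these parameter blocks are disjoint for distinct $j_S$. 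Hence the image factors as a product over $j_S \in \mathcal{R}_S$, and the slice corresponding to $j_S$ is precisely $\{\, \sum_{\ell=1}^r v^{(1,\ell)} \otimes \cdots \otimes v^{(k,\ell)} \,\}$, the set of tensors in $\mathbb{C}^{d_{C_1}} \otimes \cdots \otimes \mathbb{C}^{d_{C_k}}$ of rank at most $r$, i.e. the affine cone over $\mathrm{Sec}^r$ of the Segre variety.

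Concretely, I would first argue that $\overline{\mathrm{Mixt}^r(\mathcal{M}_G)}$, viewed as an affine variety in $\mathbb{C}^{|\mathcal{R}|}$ with coordinates indexed by $\mathcal{R} = \mathcal{R}_{C_1} \times \cdots \times \mathcal{R}_{C_k} \times \mathcal{R}_S$, is the image of the parametrizing map $\Phi$ above. Grouping the coordinates by the $\mathcal{R}_S$-index, we get a map $\Phi = \prod_{j_S} \Phi_{j_S}$ into $\bigoplus_{j_S \in \mathcal{R}_S} \left(\mathbb{C}^{d_{C_1}} \otimes \cdots \otimes \mathbb{C}^{d_{C_k}}\right)$, where each $\Phi_{j_S}$ is the standard rank-$\le r$ tensor parametrization in disjoint parameters. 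Therefore $\overline{\mathrm{im}\,\Phi} = \prod_{j_S} \overline{\mathrm{im}\,\Phi_{j_S}}$, so its dimension is $d_S \cdot \dim \mathcal{T}^{(r)}_{d_{C_1}\times\cdots\times d_{C_k}}$, where I take $\mathcal{T}^{(r)}$ to denote the affine cone (which has the same dimension as the set of such tensors inside an affine chart, up to the usual conventions). Subtracting $1$ because the mixture model lives in the simplex (equivalently, projectivizing) and intersecting with the trivial upper bound $\dim \Delta_{|\mathcal{R}|} = \prod_{v \in V} d_v - 1$ — which is needed because the ambient simplex might be too small for the product of cones to be full-dimensional — yields the stated minimum.

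The main obstacle, and the step requiring the most care, is the claim that the Zariski closure of a product of parametrized images equals the product of the closures, \emph{and} that the coordinate regrouping genuinely exhibits $\Phi$ as such a product with no shared parameters — this hinges on the clique-star hypothesis $\widetilde C_i \cap \widetilde C_j = S$ forcing every maximal clique to contain $S$, so that $j_S$ truly decouples the parameter blocks. (If some maximal clique avoided $S$ or two cliques shared a vertex outside $S$, this decoupling would fail.) I also need to be slightly careful about the bookkeeping between "the set of rank-$\le r$ tensors," its affine cone, and its projectivization, so that the "$-1$" and the dimension of $\mathcal{T}^{(r)}$ are counted consistently; this is the kind of $\pm 1$ that is routine but must be pinned down. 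A clean way to organize all of this is to invoke the earlier observation that $\mathrm{Mixt}^r(\mathcal{M}_G)$ is the latent graphical model $\mathcal{M}_{G_\ell}$ with $\ell$ adjacent to all of $V$, note that in $G_\ell$ the set $S \cup \{\ell\}$ separates the $C_i$'s from one another, and then apply the conditional-independence/clique-sum structure; but the direct parametrization argument above is the most transparent route.
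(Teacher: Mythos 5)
Your proposal is correct, and it rests on the same structural observation as the paper's proof: for a clique star every maximal clique contains $S$, so fixing the separator state $j_S$ decouples both the coordinates and the parameters $\theta^{(\ell,\widetilde C_i)}_{\,\cdot\, j_S}$ into $d_S$ disjoint blocks, each block being exactly the rank-$\le r$ tensor parametrization on $\C^{d_{C_1}}\otimes\cdots\otimes\C^{d_{C_k}}$. Where you differ is in how the decoupling is turned into a dimension count. You work globally: the parametrizing map is a product of maps in disjoint variables, so the closure of its image is the product $\prod_{j_S}\overline{\mathcal{T}^{(r)}_{d_{C_1}\times\cdots\times d_{C_k}}}$, of dimension $d_S\cdot\dim\mathcal{T}^{(r)}_{d_{C_1}\times\cdots\times d_{C_k}}$. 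The paper works infinitesimally: it observes that the Jacobian of the parametrization is block-diagonal with each block a copy of the Jacobian of $\Sec^r(\overline{\M_{C_1\indep\cdots\indep C_k}})$, so the generic rank is $d_S\cdot\dim\mathcal{T}^{(r)}_{d_{C_1}\times\cdots\times d_{C_k}}$. These are global and tangent-space versions of the same fact. Your version requires justifying that the Zariski closure of a product of images is the product of the closures (true, and you flag it), and in exchange it yields more: the product decomposition of the variety is essentially the content of Theorem~\ref{thm:clique star-ideals} on the ideal, which the paper proves separately. Your bookkeeping for the $-1$ (the product of cones is a cone, so the slice $\sum_i p_i=1$ drops the dimension by one) and for the minimum (the ambient bound handles the space-filling case, which the paper splits off at the start of its proof) is consistent with the paper's.
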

\begin{proof}
If the secant variety $\text{Sec}^r(\overline{\mathcal{M}_G})$ fills the affine hull of the probability simplex then its dimension is exactly $\prod_{v \in V} d_v - 1$. Now assume that it does not fill the ambient space. Consider the Jacobian matrix $J$ of its parametrization, with columns labeled by the probabilities $p$ and rows labeled by the parameters $\theta$. Group the column labels into $d_S$ blocks: $\{(p_{i_{V\setminus S}j_S}: i_{V\setminus S}\in\mathcal{R}_{V\setminus S}): j_S\in \mathcal{R}_{S}\}$, one for each value of $j_S\in \mathcal{R}_{S}$. Group the row labels similarly: 
\[\Big\{\big(\theta^{(\ell,C)}_{i_{C \setminus S} j_S}: C\in\mathcal{C}(G), i_{C\setminus S}\in\mathcal{R}_{C\setminus S}, \ell\in[r]\big): j_S\in \mathcal{R}_{S}\Big\}.\]
With respect to this grouping, $J$ is block-diagonal. Each of the blocks is a copy of the Jacobian matrix of $\text{Sec}^r(\overline{\M_{C_1\indep\ldots\indep C_k}})$ where $\M_{C_1\indep\ldots\indep C_k}$ is the independence model on $k$ random variables, whose state space sizes are $d_{C_1},\ldots, d_{C_k}$, respectively. Hence, the rank of each block is $\dim(\mathcal{T}^{(r)}_{d_{C_1} \times \cdots \times d_{C_k}})$. Therefore, $\rank(J)=d_S\cdot \dim(\mathcal{T}^{(r)}_{d_{C_1} \times \cdots \times d_{C_k}})$ and since our mixture model lives in the simplex, we subtract one to obtain the desired dimension.
\end{proof}

\begin{corollary}
The mixture model $\text{Mixt}^r(\mathcal{M}_G)$ has the expected dimension if and only if the set of tensors of rank at most $r$ $\mathcal{T}^{(r)}_{d_{C_1} \times \cdots \times d_{C_k}}$ has the expected dimension as the $r$th secant of the variety of rank $1$ tensors. 
\end{corollary}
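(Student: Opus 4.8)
The plan is to deduce the statement directly from Theorem~\ref{thm:dim-clique stars} after spelling out the dimension of $\mathcal{M}_G$ and the relevant ambient dimensions for a clique star. First I would compute $\dim(\mathcal{M}_G)$ via the clique-counting dimension formula for graphical models. In a clique star $G=(C_1\cup\cdots\cup C_k\cup S,E)$ all edges lie inside the maximal cliques $\widetilde C_i = C_i\cup S$, so the nonempty cliques of $G$ are precisely (i) the nonempty subsets of $S$, and (ii) for each $i$, the subsets of $\widetilde C_i$ meeting $C_i$. Summing $\prod_{v\in W}(d_v-1)$ over the cliques in (i) gives $\prod_{v\in S}d_v-1 = d_S-1$, and for each $i$, factoring a clique $W$ in (ii) as $(W\cap C_i)\sqcup(W\cap S)$ gives $\big(\prod_{v\in C_i}d_v-1\big)\cdot\prod_{v\in S}d_v = (d_{C_i}-1)\,d_S$. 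Hence
\[
\dim(\mathcal{M}_G) = (d_S-1) + \sum_{i=1}^k (d_{C_i}-1)\,d_S = d_S\Big(1+\sum_{i=1}^k(d_{C_i}-1)\Big)-1 .
\]

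Write $s := 1+\sum_{i=1}^k(d_{C_i}-1)$ for the dimension of the affine cone over the Segre variety of $d_{C_1}\times\cdots\times d_{C_k}$ rank-$1$ tensors, and $N_T := \prod_{i=1}^k d_{C_i}$ for the dimension of the ambient tensor space, so that the expected dimension of $\mathcal{T}^{(r)}_{d_{C_1}\times\cdots\times d_{C_k}}$ as the $r$-th secant of that Segre variety is $\min\{rs,\,N_T\}$ (the cap at $N_T$ being the usual convention for secant varieties). By the displayed formula, $\dim(\mathcal{M}_G)+1 = d_S\,s$, and since $V$ is the disjoint union $C_1\cup\cdots\cup C_k\cup S$ we have $\prod_{v\in V}d_v = d_S\,N_T$. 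Therefore the expected dimension of $\text{Mixt}^r(\mathcal{M}_G)$ in the sense of Definition~\ref{def:expected-dim-defective}, capped at the ambient dimension $\prod_{v\in V}d_v-1$, is
\[
\min\big\{\,r\dim(\mathcal{M}_G)+r-1,\;\textstyle\prod_{v\in V}d_v-1\,\big\} = \min\{\,r\,d_S\,s,\;d_S\,N_T\,\}-1 = d_S\cdot\min\{rs,\,N_T\}-1 .
\]

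Now I would invoke Theorem~\ref{thm:dim-clique stars}: since $\dim\big(\mathcal{T}^{(r)}_{d_{C_1}\times\cdots\times d_{C_k}}\big)\le N_T$ always, its conclusion simplifies to $\dim\big(\text{Mixt}^r(\mathcal{M}_G)\big) = d_S\cdot\dim\big(\mathcal{T}^{(r)}_{d_{C_1}\times\cdots\times d_{C_k}}\big)-1$. Comparing with the previous display, $\text{Mixt}^r(\mathcal{M}_G)$ has the expected dimension if and only if $d_S\cdot\dim\big(\mathcal{T}^{(r)}_{d_{C_1}\times\cdots\times d_{C_k}}\big) = d_S\cdot\min\{rs,\,N_T\}$, i.e.\ (cancelling $d_S>0$) if and only if $\dim\big(\mathcal{T}^{(r)}_{d_{C_1}\times\cdots\times d_{C_k}}\big)=\min\{rs,\,N_T\}$, which is exactly the statement that the set of rank-$\le r$ tensors has the expected dimension as the $r$-th secant of the variety of rank-$1$ tensors.

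No step is deep; the only thing demanding care is bookkeeping of the shift between a secant variety and its affine cone (the set of rank-$\le r$ tensors being the affine cone over $\mathrm{Sec}^r$ of the Segre, which is why the affine-cone count $rs$ and not the projective count $r(s-1)+r-1$ enters), together with using the ambient-capped notion of expected dimension on both sides. The latter is what makes the equivalence survive in the boundary regime where the tensor secant fills its ambient space: there $d_S N_T = \prod_{v\in V}d_v$ forces $\text{Mixt}^r(\mathcal{M}_G)$ to fill its ambient space too, so both sides attain the expected dimension simultaneously.
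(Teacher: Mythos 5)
Your proposal is correct and is exactly the deduction the paper intends: the corollary is stated as an immediate consequence of Theorem~\ref{thm:dim-clique stars}, and your computation of $\dim(\mathcal{M}_G)=d_S\bigl(1+\sum_i(d_{C_i}-1)\bigr)-1$ together with the identity $r\dim(\mathcal{M}_G)+r-1=r\,d_S\,s-1$ is the bookkeeping that makes it precise. Your explicit use of the ambient-capped notion of expected dimension on both sides (which Definition~\ref{def:expected-dim-defective} leaves implicit) is the right reading and is needed for the equivalence in the boundary case where the tensor secant fills its ambient space.
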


The dimension of the latter variety is known in many cases; see~\cite[Section 5.5]{landsberg2012tensors}. For matrices, we have the following corollary.

\begin{corollary}
Let $G=(C_1\cup C_2\cup S,E)$ be a clique star of two cliques. The mixture model $\text{Mixt}^r(\M_G)$ has dimension $\min\{d_S\cdot r(d_{C_1}+d_{C_2}-r)-1,d_{S}d_{C_1}d_{C_2}-1\}$. In particular, it is defective.
\end{corollary}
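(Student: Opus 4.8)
The plan is to read this statement off from Theorem~\ref{thm:dim-clique stars} in the special case $k=2$. When $G$ has exactly two maximal cliques, the object $\mathcal{T}^{(r)}_{d_{C_1}\times d_{C_2}}$ appearing there is simply the classical determinantal variety of $d_{C_1}\times d_{C_2}$ matrices of rank at most $r$. Its dimension is the well-known quantity $r(d_{C_1}+d_{C_2}-r)$ in the rank-deficient range $r\le\min\{d_{C_1},d_{C_2}\}$, and it equals the full ambient dimension $d_{C_1}d_{C_2}$ once $r$ reaches $\min\{d_{C_1},d_{C_2}\}$; the quickest way to see the first formula is through the dominant parametrization $(U,V)\mapsto UV^{\mathsf{T}}$ with $U\in\C^{d_{C_1}\times r}$ and $V\in\C^{d_{C_2}\times r}$, whose generic fibre is a single $\mathrm{GL}_r$-orbit of dimension $r^2$, so that the image has dimension $r(d_{C_1}+d_{C_2})-r^2$. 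Substituting $\dim\big(\mathcal{T}^{(r)}_{d_{C_1}\times d_{C_2}}\big)=r(d_{C_1}+d_{C_2}-r)$ into the formula of Theorem~\ref{thm:dim-clique stars}, and using $\prod_{v\in V}d_v=d_S d_{C_1}d_{C_2}$, produces exactly $\min\{d_S\,r(d_{C_1}+d_{C_2}-r)-1,\; d_S d_{C_1}d_{C_2}-1\}$; the outer minimum already encodes the transition to the regime where the determinantal variety fills, so there is nothing further to check for the dimension formula.

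For the defectiveness claim I would first record $\dim(\M_G)$. Applying Theorem~\ref{thm:dim-clique stars} with $r=1$ (so that $\mathcal{T}^{(1)}_{d_{C_1}\times d_{C_2}}$ is the cone over the Segre variety, of dimension $d_{C_1}+d_{C_2}-1$) and using that $(d_{C_1}-1)(d_{C_2}-1)\ge 0$ makes the first term of the minimum the active one, we obtain $\dim(\M_G)=d_S(d_{C_1}+d_{C_2}-1)-1$; this also follows directly from the general dimension formula for decomposable graphical models recalled in Section~\ref{sec:preliminaries}. Hence the expected dimension of $\Mixt^r(\M_G)$ is $r\dim(\M_G)+r-1=r\,d_S(d_{C_1}+d_{C_2}-1)-1$. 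On the other hand, by the formula just proved the actual dimension is at most its first argument, namely $d_S\,r(d_{C_1}+d_{C_2}-r)-1$. The gap between the expected dimension and this bound is $r\,d_S(r-1)$, which is strictly positive for every $r\ge 2$ since $d_S\ge 1$. Therefore $\Mixt^r(\M_G)$ is defective for all $r\ge 2$ (for $r=1$ the model is $\M_G$ itself and there is nothing to say).

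I do not expect a genuine obstacle here, as the corollary is an essentially immediate consequence of Theorem~\ref{thm:dim-clique stars}. The only place calling for a little care is the bookkeeping between the two ranges of $r$: one must quote the determinantal dimension in a form valid both when $r\le\min\{d_{C_1},d_{C_2}\}$ and when the matrix variety has already filled its ambient space, and check that the outer minimum in the statement reproduces the correct value in each range (for $r$ beyond $\max\{d_{C_1},d_{C_2}\}$, which is outside the regime of interest, the bare expression $r(d_{C_1}+d_{C_2}-r)$ ceases to be meaningful and one simply uses that the variety fills). Once this is handled, substitution into Theorem~\ref{thm:dim-clique stars} together with the one-line comparison of dimensions above completes the proof.
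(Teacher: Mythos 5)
Your proposal is correct and follows essentially the same route as the paper: identify $\mathcal{T}^{(r)}_{d_{C_1}\times d_{C_2}}$ as the determinantal variety of matrices of rank at most $r$, quote its dimension $r(d_{C_1}+d_{C_2}-r)$, and substitute into Theorem~\ref{thm:dim-clique stars}. Your explicit verification of defectiveness (computing the gap $r\,d_S(r-1)>0$ for $r\ge 2$) spells out a step the paper leaves implicit, but it is the same argument.
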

\begin{proof}
The dimension of the variety of all $d_{C_1}\times d_{C_2}$ matrices of rank at most $r$ is known to be $r(d_{C_1}+d_{C_2}-r)$~\cite[Proposition 1.1]{determinantal-rings}, so the conclusion follows from Theorem~\ref{thm:dim-clique stars}.
\end{proof}

\begin{example}[Mixture of paths] \label{ex:mixture-of-paths}
Consider the graphical model $\M_G$ given by the path $X_1 - X_2 - X_3$ where $d_1=5$ and $d_2=d_3=4$. This toric model has dimension $d_2(d_1+d_3-1)-1=31$ in $\Delta_{80}$. The ideal of the second secant variety $\text{Sec}^2(\M_G)$ is minimally generated by 160 cubics, such as
\begin{center}
\scalebox{0.9}{
$p_{344}p_{443}p_{542}-p_{343}p_{444}p_{542}-p_{344}p_{442}p_{543}+p_{342}p_{444}p_{543}+p_{343}p_{442}p_{544}-p_{342}p_{443}p_{544}$}.
\end{center}
All of these cubics are $3\times 3$ minors of the matrix of joint probabilities of $X_1$ and $X_3$, after we have fixed some value of $X_2$. The model $\text{Sec}^2(\M_G)$ has dimension $55=4\cdot 2(5+4-2)-1$, as opposed to the expected dimension $63=2\cdot 31+1$. Therefore, it is defective. The third secant variety has dimension 71 and is also defective, since it does not fill the ambient space. 
\end{example}

Next, we describe the ideal $I_G^{(r)}$ of the secant variety $\Sec^r(\overline{\M_G})$, where $G$ is a clique star. For any value $j_S\in\mathcal{R}_S$, let $\M_{j_S; C_1\indep\ldots\indep C_k}$ denote the independence model on the cliques upon fixing~$X_S=j_S$. Let $I^{(r)}_{j_S, d_{C_1} \times \cdots \times d_{C_k}}$ denote the vanishing ideal of $\text{Sec}^r(\overline{\M_{j_S; C_1\indep\ldots\indep C_k})}$.

\begin{theorem}\label{thm:clique star-ideals}
Let $G=(C_1 \cup \cdots \cup C_k \cup S, E)$ be a clique star. Then
\[
I_G^{(r)} = \sum_{j_S \in \mathcal{R}_S}   I^{(r)}_{j_S, d_{C_1} \times \cdots \times d_{C_k}}.
\]
\end{theorem}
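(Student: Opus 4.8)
The plan is to prove the two containments separately. The easy direction, $\supseteq$, follows immediately from Proposition~\ref{prop:markov-ideal-containment}: in a clique star, $S$ separates $C_i$ from $\bigcup_{j\ne i} C_j$, so after fixing $X_S = j_S$ the model enforces mutual independence of $X_{C_1},\dots,X_{C_k}$; the minors defining $I^{(r)}_{j_S, d_{C_1}\times\cdots\times d_{C_k}}$ are exactly the ones certified to lie in $I_G^{(r)}$ by that proposition (or, more robustly, one argues directly that the restriction-to-$j_S$ coordinates of $\Sec^r(\overline{\M_G})$ parametrize $\Sec^r(\overline{\M_{j_S;C_1\indep\ldots\indep C_k}})$, so any polynomial in the latter's vanishing ideal pulls back into $I_G^{(r)}$).

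For the reverse containment $\subseteq$, I would exploit the block structure already used in the proof of Theorem~\ref{thm:dim-clique stars}. Write $J := \sum_{j_S} I^{(r)}_{j_S, d_{C_1}\times\cdots\times d_{C_k}}$. Both $I_G^{(r)}$ and $J$ are prime: $I_G^{(r)}$ is the vanishing ideal of an irreducible variety (a secant of an irreducible toric variety), and $J$ is prime because it is (up to the obvious relabeling of variables into $d_S$ disjoint blocks of $p$-coordinates) the ideal of a product $\prod_{j_S}\Sec^r(\overline{\M_{j_S;C_1\indep\ldots\indep C_k}})$ of irreducible varieties in disjoint sets of variables, hence irreducible. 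Since we already have $J \subseteq I_G^{(r)}$ and both are prime, it suffices to show the varieties they cut out have the same dimension; equality of the ideals then follows. The variety $V(J)$ is the product described above, so $\dim V(J) = \sum_{j_S} \dim \Sec^r(\overline{\M_{j_S;C_1\indep\ldots\indep C_k}}) = d_S \cdot \dim\big(\mathcal{T}^{(r)}_{d_{C_1}\times\cdots\times d_{C_k}}\big)$ when this does not exceed $\prod_{v\in V} d_v$, and equals the ambient dimension otherwise. By Theorem~\ref{thm:dim-clique stars}, this is exactly $\dim \Sec^r(\overline{\M_G})$ (the "$-1$" discrepancy disappears once we pass from the simplex to the affine cone). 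Hence $V(J)$ and $\Sec^r(\overline{\M_G})$ are irreducible varieties of the same dimension with $\Sec^r(\overline{\M_G}) \subseteq V(J)$, forcing them to be equal and therefore $I_G^{(r)} = J$.

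There is one subtlety to nail down carefully: that $V(J)$ really is the product of the $V\big(I^{(r)}_{j_S,\dots}\big)$ in the naive coordinatewise sense. This requires checking that fixing $X_S = j_S$ partitions the $p$-coordinates of $\mathbb{C}^{|\mathcal{R}|}$ into $d_S$ genuinely disjoint blocks $\{p_{i_{V\setminus S} j_S} : i_{V\setminus S}\}$ — which it does, since every state $i\in\mathcal{R}$ has a unique value of $i_S$ — and that each $I^{(r)}_{j_S,\dots}$ involves only the variables in its own block. The latter holds because the independence model $\M_{j_S;C_1\indep\ldots\indep C_k}$ and its secants live entirely in the $j_S$-block: their parametrization $\prod_{i} \theta^{(\ell,C_i)}_{i_{C_i}, j_S}$ only ever touches $p$-coordinates with $X_S$-value equal to $j_S$. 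Given that, the ideal of a product of varieties in disjoint variables is the sum of the extended ideals, and primeness of the sum is standard.

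The main obstacle is the dimension-matching step — more precisely, making sure the bookkeeping between "dimension in the simplex" (as stated in Theorem~\ref{thm:dim-clique stars}) and "dimension of the affine variety / Krull dimension of the ideal" is done consistently, and handling the boundary case where $\Sec^r(\overline{\M_G})$ fills the ambient space (where $J$ must be shown to be the whole ideal, equivalently each block already fills its block and the product is everything). Everything else is a clean "two primes, one contained in the other, equal dimension" argument once the product structure of $V(J)$ is established.
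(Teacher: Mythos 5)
Your proposal is correct and follows essentially the same route as the paper's proof: establish $J\subseteq I_G^{(r)}$ by checking the generators vanish on the parametrization, observe that $J$ is prime because it is a sum of prime ideals in disjoint blocks of variables (equivalently, the ideal of a product of irreducible varieties), and conclude equality from Theorem~\ref{thm:dim-clique stars} since two nested primes of the same dimension coincide. The only caveat is your passing suggestion that Proposition~\ref{prop:markov-ideal-containment} alone certifies the containment $J \subseteq I_G^{(r)}$ --- for general $r$ the vanishing ideal $I^{(r)}_{j_S, d_{C_1}\times\cdots\times d_{C_k}}$ need not be generated by flattening minors, so the direct vanishing-on-the-parametrization argument you give as the ``more robust'' alternative (and which the paper uses) is the one that is actually needed.
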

\begin{proof}
Denote the right-hand side ideal by $J$. Note that for any fixed value of $j_S\in \mathcal{R}_S$, the polynomials in $ I^{(r)}_{j_S, d_{C_1} \times \cdots \times d_{C_k}}$ vanish on the parametrization of $\Sec^r(\overline{\M_G})$, as it is consistent with the parametrization of the independence model. Therefore, $J\subseteq I_G^{(r)}$. Since secant varieties are parametrized, they are irreducible. Hence, each ideal $I^{(r)}_{j_S, d_{C_1} \times \cdots \times d_{C_k}}$ is prime, which means that $J$ is also prime, as the sets of variables among the summands are disjoint. In addition, each ideal $I^{(r)}_{j_S, d_{C_1} \times \cdots \times d_{C_k}}$ has dimension $\dim(\mathcal{T}^r_{d_{C_1} \times \cdots \times d_{C_k}})$, so $J$ has dimension $d_S\dim(\mathcal{T}^r_{d_{C_1} \times \cdots \times d_{C_k}})$, since the variables are disjoint. By Theorem~\ref{thm:dim-clique stars}, both $I_G^{(r)}$ and $J$ have the same dimension. Since both ideals are prime and have the same dimension, and one is contained in another, they are indeed equal.
\end{proof}

When $r=2$, we can describe the ideal $I_G^{(2)}$ more concretely. It is generated in degree three by $3\times 3$ minors of tensor flattenings.

\begin{corollary}
For $r=2$, we have
$
I_G^{(2)} = \sum_{j_S\in \mathcal{R}_S} \sum_{A \perp_{S} B \in G} I_{j_S; A\indep B}^{(2)}.
$
In particular, the ideal is generated by $3 \times 3$ minors.
\end{corollary}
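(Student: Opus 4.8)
The plan is to derive the corollary directly from Theorem~\ref{thm:clique star-ideals} by identifying the right-hand side ideal $\sum_{j_S}I^{(r)}_{j_S, d_{C_1}\times\cdots\times d_{C_k}}$ explicitly in the case $r=2$. For a fixed $j_S\in\mathcal R_S$, the ideal $I^{(2)}_{j_S, d_{C_1}\times\cdots\times d_{C_k}}$ is the vanishing ideal of the second secant variety of the Segre variety $\mathbb P^{d_{C_1}-1}\times\cdots\times\mathbb P^{d_{C_k}-1}$, living in the coordinates $p_{i_{C_1}\cdots i_{C_k}j_S+}$. It is a classical fact (Landsberg–Manivel / Strassen; see e.g.\ \cite[Section 3.7]{landsberg2012tensors}) that the ideal of the second secant variety of a Segre variety is generated in degree three by the $3\times 3$ minors of all flattenings of the tensor — that is, by grouping the $k$ tensor factors into two blocks and taking the $3\times 3$ minors of the resulting matrix. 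So the first step is to invoke this result to write $I^{(2)}_{j_S, d_{C_1}\times\cdots\times d_{C_k}}$ as the ideal of $3\times3$ minors of all such flattenings.

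The second step is to match these flattenings with the separation statements of the clique star $G$. A flattening of the tensor $(p_{i_{C_1}\cdots i_{C_k}j_S+})$ corresponds to a partition of the index set $\{C_1,\dots,C_k\}$ into two nonempty parts, say those indexed by $I\subseteq[k]$ and its complement. Setting $A=\bigcup_{i\in I}C_i$ and $B=\bigcup_{i\notin I}C_i$, I claim that $A\perp_S B\in G$: indeed, in a clique star every path between a node of $C_i$ and a node of $C_j$ with $i\neq j$ must pass through $S$, and removing $S$ disconnects the $C_i$'s from one another, so $S$ separates $A$ from $B$ for any such bipartition. Conversely, every saturated separation statement $A\perp_S B\in G$ with $A\cup B\cup S=V$ arises this way (and it suffices to use saturated statements, since the matrix $M^{(r)}_{j_S;A\indep B}$ for a non-saturated statement has entries that are sums of entries of a saturated one, so its minors lie in the ideal generated by the saturated ones). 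Under this correspondence, the flattening of the tensor along $(I,[k]\setminus I)$ with $X_S$ fixed to $j_S$ is exactly the matrix $M^{(2)}_{j_S;A\indep B}$, and its $3\times3$ minors generate $I^{(2)}_{j_S;A\indep B}$. Hence $I^{(2)}_{j_S, d_{C_1}\times\cdots\times d_{C_k}} = \sum_{A\perp_S B\in G} I^{(2)}_{j_S;A\indep B}$.

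Summing over $j_S\in\mathcal R_S$ and combining with Theorem~\ref{thm:clique star-ideals} gives the displayed equality
\[
I_G^{(2)} = \sum_{j_S\in\mathcal R_S}\sum_{A\perp_S B\in G} I_{j_S;A\indep B}^{(2)},
\]
and since each summand is generated by $3\times3$ minors of a matrix of (marginalized) probabilities, so is $I_G^{(2)}$. The main obstacle is the bookkeeping in the second step: being careful that the flattenings of the Segre tensor in the cited ideal-generation theorem correspond precisely to the matrices $M^{(2)}_{j_S;A\indep B}$ appearing in Proposition~\ref{prop:markov-ideal-containment}, and checking that restricting to saturated separation statements (which is what the clique-star bipartitions give) does not lose any generators — both are routine but need to be spelled out. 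One should also note that Proposition~\ref{prop:markov-ideal-containment} already gives the containment $\supseteq$; the content of the corollary is the reverse containment, which is what the argument above supplies.
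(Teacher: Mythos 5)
Your argument is correct and follows essentially the same route as the paper: invoke the known result that the ideal of the second secant of a Segre variety is generated by the $3\times 3$ minors of all flattenings, identify flattenings of $T_{j_S}$ with saturated separation statements $A\perp_S B$ in the clique star, observe that unsaturated statements are implied by saturated ones, and conclude via Theorem~\ref{thm:clique star-ideals}. The extra remark that Proposition~\ref{prop:markov-ideal-containment} already gives one containment is accurate but not needed once the two ideals are shown to coincide.
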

\begin{proof}
    The ideal $I^{(2)}_{j_S, d_{C_1} \times \cdots \times d_{C_k}}$ is generated by the $3\times 3$ minors of all flattenings of the corresponding $d_{C_1}\times \ldots \times d_{C_k}$ tensor $T_{j_S}$ of joint probabilities for every fixed $j_S\in\mathcal{R}_S$~\cite{allman2015tensors}. For a clique star, each saturated separation statement is of the form $A\perp_S B\in G$, where $A$ and $B$ are disjoint and $A\cup B=\cup_{i=1}^kC_i$. Thus, there is a one-to-one correspondence between flattenings of $T_{j_S}$ and saturated separation statements in $G$. Since any unsaturated separation statement in a graph is implied by a saturated separation statement, we have that $I^{(2)}_{j_S, d_{C_1} \times \cdots \times d_{C_k}}=\sum_{A \perp_{S} B \in G} I_{j_S; A\indep B}^{(2)}$, and the claim follows from Theorem~\ref{thm:clique star-ideals}.
\end{proof}

\subsection{Secants of Paths}
Mixtures of graphical models that are not given by clique stars are significantly more complicated. Their ideals are no longer generated by cubics, as illustrated in the next examples.
\begin{example}[A 4-path] \label{ex:4-path}
Let $G$ be the path $X_1-X_2-X_3-X_4$ on four nodes. Suppose $X_2$ is a ternary random variable, while the other three are binary. The mixture model $\Mixt^2(\M_G)$ has codimension 2 in $\Delta_{23}$. The corresponding secant variety is minimally generated by four sextics supported on 48 terms, such as
\begin{center}
\noindent\scalebox{0.75}{$-p_{1121}p_{1222}p_{1321}p_{2112}p_{2211}p_{2311}
+p_{1121}p_{1221}p_{1322}p_{2112}p_{2211}p_{2311}
+p_{1112}p_{1222}p_{1321}p_{2121}p_{2211}p_{2311}
-p_{1112}p_{1221}p_{1322}p_{2121}p_{2211}p_{2311}
$}\\ \scalebox{0.75}{$+p_{1122}p_{1221}p_{1321}p_{2111}p_{2212}p_{2311}
-p_{1121}p_{1221}p_{1322}p_{2111}p_{2212}p_{2311}
+p_{1111}p_{1221}p_{1322}p_{2121}p_{2212}p_{2311}
-p_{1111}p_{1221}p_{1321}p_{2122}p_{2212}p_{2311}
$}\\ \scalebox{0.75}{$-p_{1122}p_{1212}p_{1321}p_{2111}p_{2221}p_{2311}
+p_{1121}p_{1212}p_{1322}p_{2111}p_{2221}p_{2311}
-p_{1121}p_{1211}p_{1322}p_{2112}p_{2221}p_{2311}
+p_{1112}p_{1211}p_{1322}p_{2121}p_{2221}p_{2311}
$}\\ \scalebox{0.75}{$-p_{1111}p_{1212}p_{1322}p_{2121}p_{2221}p_{2311}
+p_{1111}p_{1212}p_{1321}p_{2122}p_{2221}p_{2311}
+p_{1121}p_{1211}p_{1321}p_{2112}p_{2222}p_{2311}
-p_{1112}p_{1211}p_{1321}p_{2121}p_{2222}p_{2311}
$}\\ \scalebox{0.75}{$-p_{1122}p_{1221}p_{1321}p_{2111}p_{2211}p_{2312}
+p_{1121}p_{1222}p_{1321}p_{2111}p_{2211}p_{2312}
-p_{1111}p_{1222}p_{1321}p_{2121}p_{2211}p_{2312}
+p_{1111}p_{1221}p_{1321}p_{2122}p_{2211}p_{2312}
$}\\ \scalebox{0.75}{$+p_{1122}p_{1211}p_{1321}p_{2111}p_{2221}p_{2312}
-p_{1111}p_{1211}p_{1321}p_{2122}p_{2221}p_{2312}
-p_{1121}p_{1211}p_{1321}p_{2111}p_{2222}p_{2312}
+p_{1111}p_{1211}p_{1321}p_{2121}p_{2222}p_{2312}
$}\\ \scalebox{0.75}{$+p_{1122}p_{1221}p_{1312}p_{2111}p_{2211}p_{2321}
-p_{1121}p_{1222}p_{1312}p_{2111}p_{2211}p_{2321}
+p_{1121}p_{1222}p_{1311}p_{2112}p_{2211}p_{2321}
-p_{1112}p_{1222}p_{1311}p_{2121}p_{2211}p_{2321}
$}\\ \scalebox{0.75}{$+p_{1111}p_{1222}p_{1312}p_{2121}p_{2211}p_{2321}
-p_{1111}p_{1221}p_{1312}p_{2122}p_{2211}p_{2321}
-p_{1122}p_{1221}p_{1311}p_{2111}p_{2212}p_{2321}
+p_{1111}p_{1221}p_{1311}p_{2122}p_{2212}p_{2321}
$}\\ \scalebox{0.75}{$+p_{1122}p_{1212}p_{1311}p_{2111}p_{2221}p_{2321}
-p_{1122}p_{1211}p_{1312}p_{2111}p_{2221}p_{2321}
-p_{1111}p_{1212}p_{1311}p_{2122}p_{2221}p_{2321}
+p_{1111}p_{1211}p_{1312}p_{2122}p_{2221}p_{2321}
$}\\ \scalebox{0.75}{$+p_{1121}p_{1211}p_{1312}p_{2111}p_{2222}p_{2321}
-p_{1121}p_{1211}p_{1311}p_{2112}p_{2222}p_{2321}
+p_{1112}p_{1211}p_{1311}p_{2121}p_{2222}p_{2321}
-p_{1111}p_{1211}p_{1312}p_{2121}p_{2222}p_{2321}
$}\\ \scalebox{0.75}{$-p_{1121}p_{1221}p_{1311}p_{2112}p_{2211}p_{2322}
+p_{1112}p_{1221}p_{1311}p_{2121}p_{2211}p_{2322}
+p_{1121}p_{1221}p_{1311}p_{2111}p_{2212}p_{2322}
-p_{1111}p_{1221}p_{1311}p_{2121}p_{2212}p_{2322}
$}\\ \scalebox{0.75}{$-p_{1121}p_{1212}p_{1311}p_{2111}p_{2221}p_{2322}
+p_{1121}p_{1211}p_{1311}p_{2112}p_{2221}p_{2322}
-p_{1112}p_{1211}p_{1311}p_{2121}p_{2221}p_{2322}
+p_{1111}p_{1212}p_{1311}p_{2121}p_{2221}p_{2322}$}.
\end{center}

These sextic generators were computed using the Multigraded Implicitization~\cite{joe-ben} package implemented in Macaulay2, which takes advantage of multigradings of polynomial maps to apply linear algebra techniques. 
\end{example}

\begin{example}[Binary path] \label{ex:binary-path}
Let $G$ be the path on five nodes where all random variables are binary. This is the smallest binary path $G$ such that $\Mixt^2(\M_G)$ does not fill the ambient space. The mixture model has the expected dimension 19 in $\Delta_{31}$. However, the generators of the ideal $I_G^{(2)}$ are quite complicated. By Proposition~\ref{prop:markov-ideal-containment}, the ideal $I^{(2)}_G$ contains 32 minimal cubic generators. These are $3$-minors of two $4\times 4$ matrices, obtained by fixing a value of $X_3$. In addition, the ideal has 57 minimal quartic generators, such as
\begin{center}
\noindent\scalebox{0.75}
{  $p_{11222}p_{21112}p_{22121}p_{22211}-p_{11112}p_{21222}p_{22121}p_{22211}-p_{11221}p_{21112}p_{22122}p_{22211}+
p_{11112}p_{21221}p_{22122}p_{22211}-$}
\scalebox{0.75}
{  $p_{11222}p_{21111}p_{22121}p_{22212}+
p_{11111}p_{21222}p_{22121}p_{22212}+p_{11221}p_{21111}p_{22122}p_{22212}-
p_{11111}p_{21221}p_{22122}p_{22212}-$}
\scalebox{0.75}
{  $p_{11212}p_{21122}p_{22111}p_{22221}+
p_{11122}p_{21212}p_{22111}p_{22221}+p_{11211}p_{21122}p_{22112}p_{22221}-
p_{11122}p_{21211}p_{22112}p_{22221}+$}
\scalebox{0.75}
{$p_{11212}p_{21121}p_{22111}p_{22222}-
p_{11121}p_{21212}p_{22111}p_{22222}-p_{11211}p_{21121}p_{22112}p_{22222}+
p_{11121}p_{21211}p_{22112}p_{22222}$}.
\end{center}
These quartics were computed using~\cite{joe-ben}.
Computation shows that the ideal generated by cubics and quadrics has codimension 12, so we have found all minimal generators of $\Mixt^2(\M_G)$.

\end{example}
\section{Dimensions of Second Secants} \label{sec:dimension}

In the last section, we fully characterized the dimension of mixtures of clique stars. In particular, mixtures of clique stars may be defective if the corresponding set of tensors is defective. Surprisingly, if we add cliques to the graph in such a way that it is no longer a clique star, we observe that the mixture model has expected dimension.

\begin{example} \label{ex:surprising-example}
    Consider the graph $P_3$ given by the path $X_1 - X_2 - X_3$ on three nodes pictured in Figure \ref{fig:running-example} (a). If the states are given by $d_1=5$ and $d_2=d_3=4$, then we have seen in Example~\ref{ex:mixture-of-paths} that the $2$-mixture model is defective. However, if we extend the path by one more node as in Figure \ref{fig:running-example} (b), we obtain that the 2-mixture has expected dimension for all possible number of states $d_4$. For example, if $d_4=2$, the toric model $\mathcal{M}_{P_4}$ has dimension $35$ in $\Delta_{160}$ and the two-mixture is of expected dimension $2 \cdot 35 + 1 = 71$. Similarly,  the $2$-mixtures of the graph obtained by adding an isolated node as in  Figure \ref{fig:running-example} (c) are also of expected dimension.
\end{example}

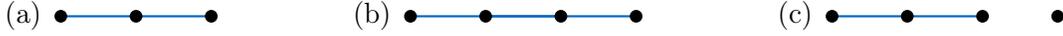
\begin{figure} 
\begin{center}
\tikzset{
      every node/.style={},
}
(a)
\begin{tikzpicture}[align=center]
    \draw[MyBlue] (-1,0)  -- (0,0);
    \draw[MyBlue] (0,0)  -- (1,0);
    \filldraw (-1,0) circle (2pt) node[anchor=south] {};
    \filldraw (0,0) circle (2pt) node[anchor=south] {};
    \filldraw (1,0) circle (2pt) node[anchor=south] {};
\end{tikzpicture}
\hspace{1.5cm}
(b)
\begin{tikzpicture}[align=center]
    \draw[MyBlue] (-1,0)  -- (0,0);
    \draw[MyBlue] (0,0)  -- (1,0);
    \draw[MyBlue] (0,0)  -- (2,0);
    \filldraw (-1,0) circle (2pt) node[anchor=south] {};
    \filldraw (0,0) circle (2pt) node[anchor=south] {};
    \filldraw (1,0) circle (2pt) node[anchor=south] {};
    \filldraw (2,0) circle (2pt) node[anchor=south] {};
\end{tikzpicture}
\hspace{1.5cm}
(c)
\begin{tikzpicture}[align=center]
    \draw[MyBlue] (-1,0)  -- (0,0);
    \draw[MyBlue] (0,0)  -- (1,0);
     \filldraw (-1,0) circle (2pt) node[anchor=south] {};
    \filldraw (0,0) circle (2pt) node[anchor=south] {};
    \filldraw (1,0) circle (2pt) node[anchor=south] {};
    \filldraw (2,0) circle (2pt) node[anchor=south] {};
\end{tikzpicture}
\end{center}
\caption{The paths $P_3$, $P_4$ and $P_3 \sqcup P_1$.} 
\label{fig:running-example}
\end{figure}

The goal of this section is to prove Theorem~\ref{thm:ExpectedDimension}, which states that the second secant of a decomposable graphical model that is not a clique star has the expected dimension. To accomplish this, we prove several lemmas that allow us to construct separating hyperplanes; recall Definition~\ref{def:SeparatingHyperplane}. In each of these lemmas, we begin with a graph $G$ and an assignment of states $\bfd$ such that the $A$-matrix $A_{G,\bfd}$ has a separating hyperplane. Then we perform an operation to this graph and show that the hyperplane can be extended to a separating hyperplane for the $A$-matrix of the resulting graph. This allows us to apply Theorem~\ref{thm:Draisma}, which relates these separating hyperplanes to the dimension of the secant variety. We make use of the following linear algebra lemma regarding ranks of block matrices. \looseness=-1

\begin{lemma}\label{lem:RankOfStackedMatrix}
Let $M_1$ and $M_2$ be matrices with the same number of columns $n$. For each $i$, let $M_i'$ be a matrix consisting of a subset of $n' \leq n$ columns of $M_i'$. If
    \[
    \rank \begin{pmatrix}
        M_1 \\
        M_2
    \end{pmatrix} = \rank \begin{pmatrix}
        M_1' \\
        M_2'
    \end{pmatrix},
    \]
    then $\rank(M_i) = \rank(M_i')$ for each $i = 1,2$.
\end{lemma}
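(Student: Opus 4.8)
The plan is to use the obvious inequalities in both directions. Write $r_i = \rank(M_i)$ and $r_i' = \rank(M_i')$. Since $M_i'$ is a column-submatrix of $M_i$, we certainly have $r_i' \le r_i$ for $i = 1, 2$. So it suffices to prove the reverse inequalities $r_i \le r_i'$.

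First I would record the elementary fact that for a vertically stacked matrix, the rank is at least the rank of either block and at most the sum: $\max\{r_1, r_2\} \le \rank\begin{pmatrix} M_1 \\ M_2 \end{pmatrix} \le r_1 + r_2$, and likewise for the primed matrices. These come from the row space of the stacked matrix being the sum of the two row spaces. Actually the key inequality I need is just the lower bound, combined with the fact that passing from $M_i$ to $M_i'$ can only decrease rank. Let me set $R = \rank\begin{pmatrix} M_1 \\ M_2 \end{pmatrix} = \rank\begin{pmatrix} M_1' \\ M_2' \end{pmatrix}$, the common value given by hypothesis.

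The clean argument: choose $r_1$ columns of $M_1$ that are linearly independent; but I want to relate things to the stacked matrix, so instead argue as follows. We have
\[
r_1 + r_2 \ge R = \rank\begin{pmatrix} M_1' \\ M_2' \end{pmatrix}.
\]
Hmm, that goes the wrong way. Let me think again — the right approach is via dimensions of row spaces with an inclusion-exclusion flavor, or more simply: $\rank\begin{pmatrix} M_1 \\ M_2 \end{pmatrix} \le \rank(M_1) + \rank(M_2)$ always, while $\rank\begin{pmatrix} M_1' \\ M_2' \end{pmatrix} \ge \rank(M_i')$ for each $i$. That still doesn't immediately pin it down. The actual key is that the row space of $\begin{pmatrix} M_1 \\ M_2\end{pmatrix}$ contains the row space of $M_1$, so $\rank(M_1) \le R$; and we need to exploit that restricting columns preserves $R$.

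Here is the argument I would actually write. Suppose for contradiction that $r_1 > r_1'$ (the case $i=2$ is symmetric). The row space of $M_1'$ is the image of the row space of $M_1$ under the coordinate projection $\pi \colon \mathbb{C}^n \to \mathbb{C}^{n'}$ onto the chosen $n'$ columns. Let $W = \operatorname{rowspan}(M_1) + \operatorname{rowspan}(M_2) \subseteq \mathbb{C}^n$, so $\dim W = R$, and $\pi(W) = \operatorname{rowspan}(M_1') + \operatorname{rowspan}(M_2')$ has dimension $R$ as well by hypothesis. Thus $\pi|_W$ is injective, i.e. $W \cap \ker\pi = 0$. But $\ker\pi$ is spanned by the standard basis vectors for the $n - n'$ discarded columns, and $\operatorname{rowspan}(M_1) \subseteq W$, so $\operatorname{rowspan}(M_1) \cap \ker\pi = 0$, which means $\pi|_{\operatorname{rowspan}(M_1)}$ is injective and hence $r_1' = \dim\pi(\operatorname{rowspan}(M_1)) = \dim\operatorname{rowspan}(M_1) = r_1$, contradicting $r_1 > r_1'$. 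The same argument with $M_2$ in place of $M_1$ gives $r_2 = r_2'$.

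The main (and only) obstacle is simply organizing the linear algebra cleanly: the slick way is to observe that a coordinate projection $\pi$ which is injective on a subspace $W$ is automatically injective on every subspace of $W$, and both $\operatorname{rowspan}(M_1)$ and $\operatorname{rowspan}(M_2)$ sit inside $W$. Once that observation is in place the proof is a one-liner in each case. I expect no genuine difficulty here; it is a routine rank lemma whose only subtlety is resisting the temptation to use the (non-helpful) subadditivity bound and instead use the injectivity-of-projection formulation.
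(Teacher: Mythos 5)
Your final argument is correct, but it takes a genuinely different (dual) route from the paper's. You work with \emph{row} spaces: letting $\pi\colon\C^n\to\C^{n'}$ be the coordinate projection onto the retained columns, you note that $\operatorname{rowspan}(M')=\pi(\operatorname{rowspan}(M))$, so the rank hypothesis forces $\pi$ to be injective on $W=\operatorname{rowspan}(M)$, hence on the subspace $\operatorname{rowspan}(M_1)\subseteq W$, which gives $\rank(M_1')=\rank(M_1)$ directly (your ``contradiction'' framing is cosmetic --- the argument proves equality outright). The paper instead works with \emph{column} spaces and argues by contradiction: if $\rank(M_1')<\rank(M_1)$, pick $\bfx_1$ in the image of $M_1$ but not of $M_1'$, lift it to $\bfy$ with $M_1\bfy=\bfx_1$, and set $\bfx_2=M_2\bfy$; then $(\bfx_1^{\top},\bfx_2^{\top})^{\top}$ lies in the image of the stacked matrix $M$ but not of $M'$, contradicting the equality of ranks (since the image of $M'$ is contained in that of $M$). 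Your projection-injectivity formulation is arguably slicker and handles both $i=1,2$ in one stroke, while the paper's version is more hands-on, exhibiting an explicit witness vector. Both arguments tacitly use that $M_1'$ and $M_2'$ are cut out by the \emph{same} set of column indices (otherwise the stacked primed matrix is not a submatrix of the stacked matrix); this is the intended reading of the lemma and is needed in either proof, so it is not a gap, but it is worth stating explicitly.
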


\begin{proof}
    Let $M_1 \in \mathbb{R}^{k_1 \times n}$, $M_2 \in \mathbb{R}^{k_2 \times n}$, and let 
    \[
    M = \begin{pmatrix}
        M_1 \\
        M_2
    \end{pmatrix} \qquad \text{and} \qquad M' = \begin{pmatrix}
        M_1' \\
        M_2'
    \end{pmatrix}.
    \]
   First, observe that $\rank(M_i') \leq \rank(M_i)$ for $i=1,2$ since $M_i'$ consists of a subset of the columns of $M_i$.
    Now, assume that $\rank(M_1') < \rank(M_1) $. Then the image of $M_1'$ is contained in a strictly lower dimensional subspace of the image of $M_1$.  In particular, there is a vector $\bfx_1 \in \mathbb{R}^{k_1}$ that is in the image of $M_1$ but not in the image of $M_1'$. Since $\bfx_1$ is in the image of $M_1$, there is $\bfy \in \mathbb{R}^{n}$ such that $M_1 \cdot \bfy = \bfx_1$. Now, define $\bfx_2 = M_2 \cdot \bfy \in \mathbb{R}^{k_2}$, and observe that the vector $(\bfx_1^{\top}, \bfx_2^{\top})^{\top} $ is in the image of $M$ but not in the image of $M'$. Hence, the image of $M'$ must have strictly lower dimension than the image of $M$, that is, $\rank(M') < \rank(M)$. Since this is a contradiction, we conclude that $\rank(M_1') = \rank(M_1)$. Equivalently, we can see that $\rank(M_2') = \rank(M_2)$.
\end{proof}

We now consider the operation of adding a binary random variable to $G$ while preserving its maximal cliques and hence, the block structure of the $A$-matrix of the graphical model.
Let $G$ be a graph with maximal cliques $C_1,\dots,C_k$. 
We wish to add a node to the intersection of some of these maximal cliques in a way that preserves the clique structure of $G$. 

\begin{definition}
    Let $I \subseteq [k]$ be a set of indices of the maximal cliques of $G$. Let $x$ be a node not in $G$. The graph $G +_I x$ is is the graph obtained from $G$ by adding the node $x$ so that $x$ is adjacent to all nodes of each $C_i$ for $i \in I$. The set $I$ is \emph{compatible} with $G$ if the maximal cliques of $G +_I x$ are exactly $C_i \cup \{ x \}$ for $i \in I$ and $D_j$ for $j \not\in I$.
\end{definition}

\begin{example}
    Consider the path $P_4$ with four vertices $1,2,3$ and $4$ in that order. It has three maximal cliques $C_1 = \{1,2\}$, $C_2 = \{2,3\}$ and $C_3 = \{3,4\}$. Then $I = \{1,2\}$ is compatible since adding a vertex $x$ to cliques $C_1$ and $C_2$ yields a graph whose maximal cliques are $\{1,2,x\}$, $\{2,3,x\}$ and $\{3,4\}$. However $I' = \{1,3\}$ is not compatible since adding $x$ to both $C_1$ and $C_3$ also implies that $x$ is added to $C_2.$
\end{example}

Suppose that $G$ has maximal cliques $C_1,\dots,C_k$ and that $I$ is compatible with $G$. Without loss of generality, we assume that $I = [\ell]$ for $\ell \leq k$.
The columns of the matrix $A_{G,\bfd}$ are indexed by $\bfi \in [d_1] \times \dots \times [d_n]$ where $n$ is the number of vertices of $G$. Its rows are organized into blocks corresponding to each clique $C_j$ and indexed by $\bfi_{C_j} \in \prod_{v \in C_j} [d_v]$. Adding a binary variable corresponding to vertex $x$ results in the matrix $A_{G+_Ix,(\bfd,2)}$ whose columns are indexed by $\bfi 1$ and $\bfi 2$ where $\bfi \in [d_1] \times \dots \times [d_n]$. Since $I$ is compatible, the maximal cliques of $G +_I x$ are $C_j \cup \{x \}$ when $j \leq \ell$ and $C_j$ when $j > \ell$. So the rows of $A_{G+_Ix,(\bfd,2)}$ are also organized into blocks corresponding to each of these $k$ cliques. If $j \leq \ell$, the rows in this block are indexed by $\bfi_{C_j} 1$ and $\bfi_{C_j}2$ where $\bfi_{C_j} \in \prod_{v \in C_j} [d_v]$. If $j > \ell$, the rows are indexed in the same way as in $A_{G,\bfd}$. So the matrix $A_{G+_Ix,(\bfd,2)}$ is of the form

\begin{equation*}
    A_{G+_I x,(\bfd,2)} = \begin{blockarray}{ccc}
    & \bfi 1 & \bfi 2 \\
    \begin{block}{c(cc)}
       \bfi_{C_1} 1 & A_1 & \mathbf{0} \\
       \bfi_{C_1} 2 & \mathbf{0} & A_1 \\
       \BAhline
       & \vdots & \vdots \\
       \BAhline
       \bfi_{C_\ell}1 & A_\ell & \mathbf{0} \\
       \bfi_{C_\ell}2 & \mathbf{0} & A_\ell \\
       \BAhline
       \bfi_{C_{\ell+1}} & A_{\ell+1} & A_{\ell+1} \\
       \BAhline
       & \vdots & \vdots \\
       \BAhline
       \bfi_{C_k} & A_k & A_k \\
    \end{block}
    \end{blockarray}.   
\end{equation*}

In Theorem~\ref{thm:AddBinaryRV}, we will show that if $\bfh$ is a separating hyperplane for $A_{G,\bfd}$, then we can use it to construct a separating hyperplane for $A_{G+_I x, (\bfd, 2)}$ as long as $I$ is compatible with $G$. We construct such a hyperplane $\bfhbar$ by setting
\begin{align}\label{eq:AddBinaryRVHyperplane}
    \bar{h}_{j,\bfi} &  = h_{j,\bfi} & & \text{ for } j\not\in I \text{ and } \bfi \in \prod_{y \in C_j} [d_y], \text{ and } \nonumber \\
    \bar{h}_{j,(\bfi,i_x)}  & = h_{j,\bfi} & & \text{ for } j\in I, \ \bfi \in \prod_{y \in C_j} [d_y] \text{ and } i_x \in \{1,2\}.
\end{align}

\begin{example}\label{ex:AddNode}
Consider the graph $G = P_3 \sqcup P_1$ as in Figure \ref{fig:running-example} (c). It has three maximal cliques, $C_1 = \{1,2\}$, $C_2 = \{2,3\}$ and $C_3 = \{4\}$, where the nodes $1, \ldots, 4$ are numbered from left to right. A separating hyperplane for $A_{G, (2,2,2,2)}$ is
\[ \small{
\mathbf{h} = \begin{blockarray}{cccc|cccc|cc}
\begin{block}{\BAmulticolumn{4}{c} | \BAmulticolumn{4}{c} | \BAmulticolumn{2}{c}}
    C_1 & C_2 & C_3 \\
\end{block}
11\cdot\cdot & 12\cdot\cdot & 21\cdot\cdot & 22\cdot\cdot & \cdot 11\cdot & \cdot 12\cdot & \cdot 21\cdot & \cdot 22\cdot & \cdot\cdot\cdot 1 & \cdot\cdot\cdot 2 \\
\begin{block}{(cccc|cccc|cc)}
  1 & 1 & -1 & 1 & - 1 & 1 & -1 & -1 &  -1 & 1 \\
\end{block}\end{blockarray}.}
\]
Note that the set $I = [4]$ is compatible with $G$ since coning over a graph preserves its maximal clique structure. So we may add a node $5$ which is adjacent to all other nodes and whose associated random variable is binary. Since $5$ is added to every clique, we duplicate each block of $\bfh$ to obtain $\bfhbar$. The resulting hyperplane $\bfhbar$ is 
\[
(1,1,-1,1,1,1,-1,1 \ | \ -1,1,-1,-1,-1,1,-1,-1 \ | \ -1,1,-1,1),
\]
where the blocks correspond to $C_1 \cup \{5\}$, $C_2 \cup \{5\}$ and $C_3 \cup \{5\}$ respectively. The columns within the blocks are ordered lexicographically first according to the value assigned to the value at node $5$ and then in the order in which the other values appear in $\bfh$.
\end{example}

\begin{theorem}\label{thm:AddBinaryRV}
Let $G$ be a graph with maximal cliques $C_1,\dots,C_k$. Let $\bfd \in \N^{|V(G)|}$ be the list of the number of states of the random variable at each node. Let $I \subset [k]$ be compatible with $G$. Suppose that $\bfh$ is a separating hyperplane for $A_{G,\bfd}$ and let $\bfhbar$ be obtained from $\bfh$ as in Equation~\eqref{eq:AddBinaryRVHyperplane}. Then $\bfhbar$ is a separating hyperplane for $A_{G+_I x, (\bfd,2)}.$ 
\end{theorem}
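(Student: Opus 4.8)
The plan is to read off the sign pattern that $\bfhbar$ induces on the columns of $\bar A := A_{G+_I x,(\bfd,2)}$, reduce the three rank equalities defining ``separating hyperplane'' to an elementary identity about a $3\times 2$ block matrix, and then transfer the hypothesis on $\bfh$ through Lemma~\ref{lem:RankOfStackedMatrix}.

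Write $A := A_{G,\bfd}$ and, as in the statement, take $I=[\ell]$. Let $P$ be the submatrix of $A$ formed by the row-blocks indexed by $C_1,\dots,C_\ell$ and $Q$ the submatrix formed by the remaining row-blocks, so that $A=\left(\begin{smallmatrix}P\\ Q\end{smallmatrix}\right)$. Because $I$ is compatible with $G$, the block display of $A_{G+_I x,(\bfd,2)}$ recalled above says exactly that, grouping the columns of $\bar A$ as $[\,\bfi 1\mid\bfi 2\,]$ for $\bfi\in\mathcal{R}$ and permuting its rows so that all rows with value $1$ at $x$ precede those with value $2$ (followed by the blocks $C_j$, $j>\ell$),
\[
\bar A \;=\; \begin{pmatrix} P & 0 \\ 0 & P \\ Q & Q \end{pmatrix}.
\]
The first step is the sign computation. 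The column of $\bar A$ indexed by $\bfi i_x$ (for $i_x\in\{1,2\}$) has its only nonzero entries, all equal to $1$, at the rows $(\bfi|_{C_j},i_x)$ for $j\le\ell$ and at the rows $\bfi|_{C_j}$ for $j>\ell$; hence by \eqref{eq:AddBinaryRVHyperplane},
\[
\bfhbar\cdot(\text{column }\bfi i_x) \;=\; \sum_{j\le\ell} h_{j,\bfi|_{C_j}} \;+\; \sum_{j>\ell} h_{j,\bfi|_{C_j}} \;=\; \bfh\cdot(\text{column }\bfi\text{ of }A),
\]
which is independent of $i_x$. So the columns of $\bar A$ on which $\bfhbar$ is positive are exactly $\bfi 1$ and $\bfi 2$ for those $\bfi$ with $\bfh\cdot(\text{column }\bfi\text{ of }A)>0$, and similarly on the negative side; writing $P_+,Q_+$ (resp.\ $P_-,Q_-$) for the restrictions of $P,Q$ to the column indices appearing in $A_+$ (resp.\ $A_-$), this yields
\[
\bar A_+ \;=\; \begin{pmatrix} P_+ & 0 \\ 0 & P_+ \\ Q_+ & Q_+ \end{pmatrix}, \qquad \bar A_- \;=\; \begin{pmatrix} P_- & 0 \\ 0 & P_- \\ Q_- & Q_- \end{pmatrix}.
\]

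Next I would establish the identity $\rank\!\left(\begin{smallmatrix}R & 0\\ 0 & R\\ T & T\end{smallmatrix}\right)=\rank\!\left(\begin{smallmatrix}R\\ T\end{smallmatrix}\right)+\rank(R)$ for arbitrary matrices $R,T$ with the same number of columns. Subtracting the first column-block from the second preserves the column space and produces $\left(\begin{smallmatrix}R & -R\\ 0 & R\\ T & 0\end{smallmatrix}\right)$, whose two column-blocks span $\{(Rc,0,Tc)\}$ and $\{(-Rc',Rc',0)\}$; an equality between two such vectors forces $Rc'=0$, $Tc=0$, hence also $Rc=0$, so these two subspaces meet only at the origin and the total rank is the sum of their dimensions, namely $\rank\!\left(\begin{smallmatrix}R\\ T\end{smallmatrix}\right)+\rank(R)$. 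Applying this to the three matrices above gives $\rank(\bar A)=\rank(A)+\rank(P)$, $\rank(\bar A_+)=\rank(A_+)+\rank(P_+)$, and $\rank(\bar A_-)=\rank(A_-)+\rank(P_-)$.

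Finally, since $\bfh$ is separating, $\rank(A)=\rank(A_+)=\rank(A_-)$, and $A_+$ (resp.\ $A_-$) is a column-submatrix of $A=\left(\begin{smallmatrix}P\\ Q\end{smallmatrix}\right)$; Lemma~\ref{lem:RankOfStackedMatrix} with $M_1=P$, $M_2=Q$ and the column-subset defining $A_+$ (resp.\ $A_-$) then yields $\rank(P)=\rank(P_+)=\rank(P_-)$. Substituting into the three rank formulas gives $\rank(\bar A)=\rank(\bar A_+)=\rank(\bar A_-)$, i.e.\ $\bfhbar$ is a separating hyperplane for $A_{G+_I x,(\bfd,2)}$. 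The only genuinely substantive ingredients are the sign computation --- which is exactly what the recipe \eqref{eq:AddBinaryRVHyperplane} is built to make work, since it forces both copies of each column to inherit the same $\bfhbar$-sign --- and the block-rank identity; the rest is bookkeeping and a direct appeal to Lemma~\ref{lem:RankOfStackedMatrix}. The one point to watch is that columns $\bfi$ with $\bfh\cdot\bfi=0$ drop out of both $A_\pm$ and, in duplicate, out of both $\bar A_\pm$, so the displayed block forms for $\bar A_\pm$ genuinely have the shape required by the rank identity.
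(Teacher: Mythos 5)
Your proof is correct and follows essentially the same route as the paper's: identify the positive (resp.\ negative) column submatrix of $A_{G+_Ix,(\bfd,2)}$ as the duplicated positive (resp.\ negative) columns of $A_{G,\bfd}$ in the same block form, apply the block-rank identity $\rank\left(\begin{smallmatrix}R&0\\0&R\\T&T\end{smallmatrix}\right)=\rank\left(\begin{smallmatrix}R\\T\end{smallmatrix}\right)+\rank(R)$, and invoke Lemma~\ref{lem:RankOfStackedMatrix} to transfer the rank equality to the top block. The only difference is that you make explicit two steps the paper leaves implicit --- the sign computation showing both copies of a column inherit the sign of the original, and the proof of the block-rank identity --- which is a welcome completion rather than a change of method.
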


\begin{proof}
    Without loss of generality, let $I = [\ell]$ for some $\ell \leq k$. 
    Let $P$ be the matrix consisting of the columns $\bfa$ of $A_{G,\bfd}$ such that $\bfh \cdot \bfa > 0$. Let $P_1,\dots,P_k$ be the blocks of $P$ corresponding to each clique $C_i$. Let $\bar{P}$ be the matrix consisting of columns $\bfa$ of  $A_{G +_I x, (\bfd,2)}$ such that $\bfhbar \cdot \bfa > 0$. Then $\overline P$ has the same block structure as $A_{G +_I x, (\bfd,2)}$; that is, it is of the form
    \[
    \overline P = \begin{blockarray}{ccc}
    & \bfi 1 & \bfi 2 \\
    \begin{block}{c(cc)}
       \bfi_{C_1} 1 & P_1 & \mathbf{0} \\
       \bfi_{C_1} 2 & \mathbf{0} & P_1 \\
       \BAhline
       & \vdots & \vdots \\
       \BAhline
       \bfi_{C_\ell}1 & P_\ell & \mathbf{0} \\
       \bfi_{C_\ell}2 & \mathbf{0} & P_\ell \\
       \BAhline
       \bfi_{C_{\ell+1}} & P_{\ell+1} & P_{\ell+1} \\
       \BAhline
       & \vdots & \vdots \\
       \BAhline
       \bfi_{C_k} & P_k & P_k \\
    \end{block}
    \end{blockarray} .   
    \]

Note that by Lemma~\ref{lem:RankOfStackedMatrix} and the fact that $\rank(P) = \rank(A_{G,\bfd})$ we have
\[
\rank \begin{pmatrix}
    P_1 \\
    \vdots \\
    P_\ell  
\end{pmatrix} = \rank \begin{pmatrix}
    A_1 \\
    \vdots \\
    A_\ell  
\end{pmatrix}.
\]
Hence, we have
\[
\rank (\overline{P}) = \rank (P) + \rank \begin{pmatrix}
    P_1 \\
    \vdots \\
    P_\ell  
\end{pmatrix} = \rank (A) + \rank \begin{pmatrix}
    A_1 \\
    \vdots \\
    A_\ell  
\end{pmatrix} = \rank(A_{G +_I x, (\bfd,2)}).
\]
The same holds for the matrix consisting of columns of $A_{G +_I x, (\bfd,2)}$ whose dot product with $\bfhbar$ is negative. So $\bfhbar$ is a separating hyperplane for $A_{G +_I x, (\bfd,2)}$.
\end{proof}

We have shown how to extend the hyperplane for a graph when we add a binary random variable in a way that preserves the graph's clique structure. Next, we must consider how to extend the hyperplane when we increase the number of states at a node of $G$. Let $\bfd =(d_1, \ldots, d_n) \in \N^{|V(G)|}$ be the list of the number of states of the random variables at the nodes of $G$ and let $\bfh$ be a separating hyperplane for $A_{G, \bfd}$. Without loss of generality, consider the list $\bfdbar := (d_1 +1 ,d_2,\dots,d_n)$, where $n=|V(G)|$ is the number of nodes in $G$. Let $G$ have maximal cliques $C_1,\dots,C_k$ and suppose without loss of generality that $v_1 \in C_1,\dots,C_{\ell}$ and $v_1 \not\in C_{\ell+1}, \dots, C_k$. We construct the hyperplane $\bfhbar$ for $A_{G,\bfdbar}$ by
\begin{align}\label{eq:IncreaseStateHyperplane}
    \bar{h}_{j,\bfi} &  = h_{j,\bfi} & & \text{ for all } j \in [k] \text{ if } i_1 \neq d_1 + 1, \text{ and } \nonumber \\
    \bar{h}_{j,(d_1+1,\bfi)}  & = h_{j,(d_1,\bfi)} & & \text{ for } j \in [\ell].
\end{align}

\begin{example}\label{ex:IncreaseStates}
    We again consider the graph $G = P_3 \cup P_1$ as in Figure \ref{fig:running-example} (c) where all random variables are binary. Consider increasing the number of states at node 1 by one. Using the same hyperplane $\bfh$ for $A_{G,(2,2,2,2)}$ from Example~\ref{ex:AddNode}, the resulting hyperplane for $A_{G,(3,2,2,2)}$  is
    \[
    \small{
 \begin{blockarray}{ccccccc|cccc|cc}
\begin{block}{c&\BAmulticolumn{6}{c} | \BAmulticolumn{4}{c} | \BAmulticolumn{2}{c}}
   & C_1 & C_2 & C_3 \\
\end{block}
& 11\cdot\cdot & 12\cdot\cdot & 21\cdot\cdot & 22\cdot\cdot & 31\cdot\cdot & 32\cdot\cdot & \cdot 11\cdot & \cdot 12\cdot & \cdot 21\cdot & \cdot 22\cdot & \cdot\cdot\cdot 1 & \cdot\cdot\cdot 2 \\
\begin{block}{c(cccccc|cccc|cc)}
  \bfhbar = & 1 & 1 & -1 & 1 & -1 & 1 & - 1 & 1 & -1 & -1 &  -1 & 1 \\
\end{block}\end{blockarray}.}
\]
\end{example}
\begin{theorem} \label{thm:IncreaseStates}
Let  $G$ be a graph and let $\bfd \in \N^{|V(G)|}$ be the list of the number of states of the associated random variables. Let $\bfh$ be a separating hyperplane for $A_{G,\bfd}$, and let $\bfdbar = (d_1 +1, d_2, \dots, d_n)$. Then the hyperplane $\bfhbar$ defined in Equation \eqref{eq:IncreaseStateHyperplane} is separating for $A_{G,\bfdbar}.$
\end{theorem}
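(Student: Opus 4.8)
The plan is to verify directly that the extended vector $\bfhbar$ meets the defining condition of a separating hyperplane (Definition~\ref{def:SeparatingHyperplane}) for $A_{G,\bfdbar}$, in the same column-operation spirit as the proof of Theorem~\ref{thm:AddBinaryRV}. Write $A = A_{G,\bfd}$ and $\bar A = A_{G,\bfdbar}$, and assume without loss of generality $v_1\in C_1,\dots,C_\ell$ and $v_1\notin C_{\ell+1},\dots,C_k$. For $c\in[d_1]$ let $A^{(c)}$ be the submatrix of columns of $A$ in which $X_{v_1}=c$, and write $a^G_{(c,\mathbf{j})}$ for such a column, $\mathbf{j}\in\prod_{v\ne v_1}[d_v]$. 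Let $\tilde A$ be the $0/1$ matrix with columns indexed by $\mathbf{j}\in\prod_{v\ne v_1}[d_v]$, rows indexed by $\bigsqcup_{m=1}^{\ell}\prod_{v\in C_m\setminus v_1}[d_v]$, and a $1$ wherever $\mathbf{j}$ restricts correctly; this is exactly the submatrix of any slice $A^{(c)}$ obtained by keeping the blocks $C_1,\dots,C_\ell$ and deleting the forced zero rows (the result is independent of $c$, only the row labels change). For $c\in[d_1]$ let $\tilde A^{(c)}_+$ (resp.\ $\tilde A^{(c)}_-$) be the submatrix of $\tilde A$ with columns $\mathbf{j}$ such that $\bfh\cdot a^G_{(c,\mathbf{j})}>0$ (resp.\ $<0$).

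\textbf{Effect of $\bfhbar$ and the rank identities.} The columns of $\bar A$ with $i_1\le d_1$ form a copy of $A$, with the rows of the blocks $C_1,\dots,C_\ell$ that carry the new state $d_1+1$ adjoined as zero rows; from \eqref{eq:IncreaseStateHyperplane} one reads off $\bfhbar\cdot\bar a_{(c,\mathbf{j})}=\bfh\cdot a^G_{(c,\mathbf{j})}$ for $c\le d_1$ and $\bfhbar\cdot\bar a_{(d_1+1,\mathbf{j})}=\bfh\cdot a^G_{(d_1,\mathbf{j})}$. So a new column of $\bar A$ is positive (resp.\ negative) under $\bfhbar$ exactly when the slice-$d_1$ column $a^G_{(d_1,\mathbf{j})}$ is positive (resp.\ negative) under $\bfh$. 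Subtracting $\bar a_{(d_1,\mathbf{j})}$ from the new column $\bar a_{(d_1+1,\mathbf{j})}$ yields a vector supported only in $C_1,\dots,C_\ell$, with a $+1$ in each new row $(d_1+1,\mathbf{j}_{C_m\setminus v_1})$ and a $-1$ in the matching old row $(d_1,\mathbf{j}_{C_m\setminus v_1})$; such a difference column is determined by its restriction to the new rows, where it equals the $\mathbf{j}$-th column of $\tilde A$, while the columns of $A$ (embedded) vanish on the new rows. Performing these column operations inside $\bar A$, inside $(\bar A)_+$ (legitimate, since each subtracted slice-$d_1$ column is itself positive by the sign computation), and inside $(\bar A)_-$, and using that the ``embedded $A$'' part and the ``difference'' part meet only in $0$, gives
\[
\rank(\bar A)=\rank(A)+\rank(\tilde A),\qquad \rank\big((\bar A)_{\pm}\big)=\rank(A_{\pm})+\rank\big(\tilde A^{(d_1)}_{\pm}\big).
\]

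\textbf{The crux.} Since $\bfh$ is separating, $\rank(A_+)=\rank(A_-)=\rank(A)$, so the theorem reduces to $\rank\big(\tilde A^{(d_1)}_+\big)=\rank\big(\tilde A^{(d_1)}_-\big)=\rank(\tilde A)$; this is the one place where the hypothesis on $A$ is used. To prove it, let $\pi$ be the coordinate projection onto the blocks $C_1,\dots,C_\ell$. On each slice $\pi$ carries the columns of $A^{(c)}$ onto a copy of $\tilde A$ placed in the rows recording $X_{v_1}=c$; since these rows are pairwise disjoint in $c$, we get $\pi(\mathrm{colsp}(A))=\bigoplus_{c=1}^{d_1}\pi(\mathrm{colsp}(A^{(c)}))$, hence $\dim\pi(\mathrm{colsp}(A))=d_1\,\rank(\tilde A)$ and $\dim\pi(\mathrm{colsp}(A_+))=\sum_{c=1}^{d_1}\rank\big(\tilde A^{(c)}_+\big)$. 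Writing $\rank(M)=\dim\pi(\mathrm{colsp}(M))+\dim\big(\mathrm{colsp}(M)\cap\ker\pi\big)$ for $M\in\{A,A_+\}$ and using $\mathrm{colsp}(A_+)\subseteq\mathrm{colsp}(A)$ --- so that each summand can only drop in passing from $A$ to $A_+$ --- the equality $\rank(A_+)=\rank(A)$ forces $\sum_{c}\rank\big(\tilde A^{(c)}_+\big)=d_1\,\rank(\tilde A)$; as every term is at most $\rank(\tilde A)$, every term equals $\rank(\tilde A)$. The same argument with $A_-$ handles the negative side. Taking $c=d_1$ and substituting into the identities above gives $\rank\big((\bar A)_{\pm}\big)=\rank(A)+\rank(\tilde A)=\rank(\bar A)$, so $\bfhbar$ is separating for $A_{G,\bfdbar}$.

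\textbf{Main obstacle.} The step I expect to fight hardest with is the crux claim $\rank\big(\tilde A^{(d_1)}_{\pm}\big)=\rank(\tilde A)$: restricting first to a single value of $X_{v_1}$ and then to one side of $\bfhbar$ could in principle collapse the rank, and it is not obvious a priori that one separating hyperplane for $A$ forces ``slice-wise'' separation. The mechanism that rescues this is the direct-sum decomposition of the $v_1$-block projection over the values of $X_{v_1}$, which upgrades the single equality $\rank(A_+)=\rank(A)$ into $d_1$ simultaneous equalities. A secondary point requiring care is that the column operations in the middle step must be rank-preserving on both sides of the hyperplane, i.e.\ that for each positive (resp.\ negative) new column the subtracted slice-$d_1$ column is again positive (resp.\ negative) --- which is exactly the content of $\bfhbar\cdot\bar a_{(d_1+1,\mathbf{j})}=\bfh\cdot a^G_{(d_1,\mathbf{j})}=\bfhbar\cdot\bar a_{(d_1,\mathbf{j})}$.
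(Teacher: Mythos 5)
Your proof is correct and follows essentially the same route as the paper's: decompose the $A$-matrix by the value of $i_1$, observe that the new state's columns replicate the $i_1=d_1$ columns on the row blocks of cliques containing $v_1$, and reduce everything to showing that the $i_1=d_1$ block retains full rank after restricting to either side of the hyperplane. The only real difference is that where the paper invokes Lemma~\ref{lem:RankOfStackedMatrix} for this last point, you re-derive the needed instance inline via the projection/direct-sum argument over slices (which in fact gives the slightly stronger conclusion that every slice $\tilde A^{(c)}_{\pm}$ has full rank, not only $c=d_1$), and you justify the additive rank identities by explicit column operations rather than reading them off the block structure.
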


\begin{proof}
   Reordering the rows of $A_{G,\bfd}$ does not change the graphical model, so we consider rearranging the rows according to the value of $i_1$. Since $v_1 \in C_1,\dots,C_{\ell}$, we put the row corresponding to $\theta^{(C_j)}_{\bfi}$ before the row corresponding to $\theta^{(C_{j'})}_{\bfi'}$ if $j, j' \in [\ell]$ and $i_1 < i_1'$. The rows corresponding to cliques $C_{\ell+1},\dots,C_k$ remain the same. We are especially interested in the rows of $A_{G,\bfd}$ and $A_{G,\bfd'}$ corresponding to $\theta^{(C_j)}_{\bfi}$ with $j \in [\ell]$ and $i_1 \geq d_1$. So with this ordering of the rows, these matrices have the form
    \begin{equation}\label{eq:AddStatesMatrixForm}
A_{G,\bfd} = \begin{blockarray}{ccc}
    \begin{block}{c(cc)}
       i_1 < d_1 & A & \mathbf{0} \\
       \BAhline
       i_1 = d_1 & \mathbf{0} & B \\
       \BAhline
       C_j: j > \ell & C & D \\
    \end{block}
    \end{blockarray} \qquad \text{ and } \qquad
    A_{G,\bfdbar} = \begin{blockarray}{cccc}
    \begin{block}{c(ccc)}
       i_1 < d_1 & A & \mathbf{0} & \mathbf{0} \\
       \BAhline
       i_1 = d_1 & \mathbf{0} & B & \mathbf{0} \\
       \BAhline
       i_1 = d_1 +1 & \mathbf{0} & \mathbf{0} & B \\
       \BAhline
       C_j: j > \ell & C & D & D\\
    \end{block}
    \end{blockarray}
    \end{equation}

     Without loss of generality, let $P$ be the matrix consisting of columns $\bfa$ of $A_{G,\bfd}$ such that $\bfh \cdot \bfa > 0$. Let $\overline{P}$ be the matrix consisting of columns $\bfabar$ of $A_{G,\bfdbar}$ such that $\bfhbar \cdot \bfabar > 0$. Then by our construction of $\bfhbar$, if $i_1 \neq d_1+1$, then the column $\bfabar_{\bfi}$ of $A_{G,\bfdbar}$ is a column of $\overline{P}$ if and only if $\bfa_{\bfi}$ of $A_{G,\bfd}$ is a column of $P$. The column $\bfabar_{d_1+1,\bfi}$  is a column of $\overline{P}$ if and only if $\bfa_{d_1,\bfi}$ is a column of $P$. Hence $P$ and $\overline{P}$ have the same block structure as $A_{G,\bfd}$ and $A_{G,\bfdbar}$ in Equation~\eqref{eq:AddStatesMatrixForm}. That is, if 
    \begin{equation*}
P= \begin{blockarray}{ccc}
    \begin{block}{c(cc)}
       i_1 < d_1 & Q & \mathbf{0} \\
       \BAhline
       i_1 = d_1 & \mathbf{0} & R \\
       \BAhline
       C_j: j > \ell & S & T \\
    \end{block}
    \end{blockarray} \qquad \text{ then } \qquad
    \overline{P} = \begin{blockarray}{cccc}
    \begin{block}{c(ccc)}
       i_1 < d_1 & Q & \mathbf{0} & \mathbf{0} \\
       \BAhline
       i_1 = d_1 & \mathbf{0} & R & \mathbf{0} \\
       \BAhline
       i_1 = d_1 +1 & \mathbf{0} & \mathbf{0} & R \\
       \BAhline
       C_j: j > \ell & S & T & T\\
    \end{block}
    \end{blockarray}
    \end{equation*}

Since $\rank(A_{G,\bfd}) = \rank(P)$, by Lemma~\ref{lem:RankOfStackedMatrix}, we have that $\rank(R) = \rank(B)$. Hence,
\[
\rank(\overline{P}) = \rank(P) + \rank(R) = \rank(A_{G,\bfd}) + \rank(B) = \rank(A_{G,\bfdbar}),
\]
as needed. So $\bfhbar$ is a separating hyperplane for $A_{G,\bfdbar}$.
\end{proof}

The following proposition shows that all graphs with three maximal cliques that are not clique stars can be obtained from two minimal graphs by adding nodes to compatible subsets of the maximal cliques. These graphs will serve as a basis for induction to obtain separating hyperplanes for all decomposable graphs that are not clique stars.

\begin{proposition}\label{prop:ThreeCliques}
    Let $G$ be a graph with three maximal cliques that is not a clique star. Then $G$ can be obtained from $P_4$ or $P_3 \sqcup P_1$ by repeatedly adding a node to a compatible subset of maximal cliques.
\end{proposition}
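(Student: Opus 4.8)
The plan is to recast graphs with exactly three maximal cliques in terms of a finite combinatorial invariant and then run a descent, peeling off one vertex at a time via the reverse of the node-addition operation. First observe that a graph with exactly three maximal cliques has no chordless cycle of length $\geq 4$ (the edges of such a cycle, being triangle-free, must lie in pairwise distinct maximal cliques), so $G$ is chordal, hence decomposable; by the clique-sum characterization of decomposability we may label its maximal cliques $C_1, C_2, C_3$ so that $C_1 \cap C_3 \subseteq C_2$. Assign to each vertex $v$ its \emph{type} $\tau(v) = \{\,i : v \in C_i\,\}$; the labeling forces $\tau(v) \neq \{1,3\}$, so every type lies in $\mathcal{T} := \{\{1\},\{2\},\{3\},\{1,2\},\{2,3\},\{1,2,3\}\}$. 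Conversely, a vector $\mathbf{n} = (n_\tau)_{\tau \in \mathcal{T}} \in \N^{\mathcal{T}}$ determines a graph $G(\mathbf{n})$ (take $n_\tau$ vertices of type $\tau$, set $u \sim v$ iff $\tau(u) \cap \tau(v) \neq \emptyset$, and $C_i = \bigcup_{\tau \ni i} V_\tau$), and one checks that every clique of $G(\mathbf{n})$ lies in some $C_i$, that $\{C_1,C_2,C_3\}$ is exactly its set of maximal cliques iff
\[
n_{\{1\}} \geq 1, \qquad n_{\{3\}} \geq 1, \qquad n_{\{2\}} + n_{\{1,2\}} \geq 1, \qquad n_{\{2\}} + n_{\{2,3\}} \geq 1,
\]
and that, when these hold, $G(\mathbf{n})$ is a clique star iff $n_{\{1,2\}} = n_{\{2,3\}} = 0$. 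Call $\mathbf{n}$ \emph{admissible} if it satisfies the four displayed inequalities together with $n_{\{1,2\}} + n_{\{2,3\}} \geq 1$. All these conditions are monotone in $\mathbf{n}$, so the admissible vectors form an up-set in $\N^{\mathcal{T}}$; a direct check shows it has exactly three componentwise-minimal elements, namely $n_{\{1\}} = n_{\{3\}} = 1$, all other $n_\tau = 0$, and $(n_{\{2\}}, n_{\{1,2\}}, n_{\{2,3\}}) \in \{(1,1,0),(1,0,1),(0,1,1)\}$, whose graphs are $P_3 \sqcup P_1$ (twice, up to isomorphism) and $P_4$.

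The heart of the argument is the following reduction: if $x$ is a vertex of $G$ of type $I \in \mathcal{T}$ and $G - x$ still has exactly three maximal cliques, then $G = (G - x) +_I x$ with $I$ compatible with $G - x$. Indeed, the neighborhood of $x$ in $G$ is $\bigcup_{i \in I}(C_i \setminus \{x\})$, so re-adding $x$ to the cliques indexed by $I$ reconstructs $G$; and the maximal cliques of $G - x$ are exactly $\{C_i \setminus \{x\} : i \in I\} \cup \{C_j : j \notin I\}$, because every clique of $G - x$ already sits inside one of these sets and the only way one of them could fail to be maximal is if $C_i \subseteq C_j \cup \{x\}$ for some $i \in I$ and some other clique index $j$; running through the few cases for $I$ shows this would force one of the four displayed inequalities to fail, either in $G$ or in $G - x$, contrary to hypothesis. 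Compatibility is then immediate, since the maximal cliques of $(G-x)+_I x = G$ are precisely $C_1, C_2, C_3$.

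Now perform the descent. If the type vector $\mathbf{n}$ of $G$ is not componentwise-minimal among admissible vectors, then $\mathbf{n} \geq \mathbf{n}^\ast$ for some minimal admissible $\mathbf{n}^\ast \neq \mathbf{n}$; choose a coordinate $I$ with $n_I > n^\ast_I$, so that $\mathbf{n} - \mathbf{e}_I$ is still admissible, and delete a vertex of type $I$ from $G$. The resulting graph $G'$ has three maximal cliques and is not a clique star, and by the reduction step $G = G' +_I x$ with $I$ compatible with $G'$. Since $G'$ has one fewer vertex, iterating terminates at a graph $G_0$ whose type vector is minimal admissible, so $G_0 \cong P_4$ or $G_0 \cong P_3 \sqcup P_1$; reversing the sequence of deletions (and transporting along the evident isomorphism of the base graph) exhibits $G$ as obtained from $P_4$ or $P_3 \sqcup P_1$ by repeatedly adding a node to a compatible subset of maximal cliques.

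The main obstacle is the reduction step: verifying that a single vertex deletion genuinely leaves three maximal cliques and that re-attaching is compatible. The delicate subsets are $I = \{1,2\}$ and $I = \{2,3\}$, where deleting $x$ could in principle merge the third clique $C_j$ into $C_i \cup \{x\}$; ruling this out is exactly where both hypotheses — that $G$ and that $G - x$ have three maximal cliques — are used. The remaining ingredients, namely the dictionary between admissible vectors and graphs, the up-set property, and the enumeration of the three minimal vectors, are routine finite casework.
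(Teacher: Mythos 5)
Your argument is correct, and it takes a genuinely different route from the paper's. The paper inducts on the number of vertices: it shows directly that one may delete a vertex from $C_1\cap C_2\cap C_3$, or from any of the sets $C_i\setminus(C_j\cup C_k)$ or $(C_i\cap C_j)\setminus C_k$ having at least two elements, without disturbing the maximal-clique structure, and then disposes of the remaining five- and six-vertex configurations by explicit case analysis. You instead classify \emph{all} graphs with exactly three maximal cliques up to isomorphism: chordality (forced by having only three maximal cliques) plus the running intersection property excludes the type $\{1,3\}$, the Helly-type check on pairwise-intersecting admissible types makes the dictionary between multiplicity vectors and maximal-clique structures work, and then every relevant condition becomes a monotone inequality, so the whole induction collapses to locating the three minimal points of an up-set in $\N^{6}$ and descending to them one coordinate at a time. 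What your approach buys is uniformity --- the paper's vertex-deletion lemmas become the up-set property, and its Cases 1--3 become the enumeration of minimal vectors --- together with a clean classification of three-clique graphs as a by-product; what it costs is the preliminary chordality/RIP and Helly verifications, which the paper never needs because it never types vertices globally. One small streamlining: in your reduction step the sentence beginning ``the only way one of them could fail to be maximal'' is redundant once the dictionary is in place, since $G-x=G(\mathbf{n}-\mathbf{e}_I)$ and $\mathbf{n}-\mathbf{e}_I$ satisfies the four inequalities, so its maximal cliques are automatically the sets $C_i\setminus\{x\}$ and compatibility of $I$ follows at once.
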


\begin{proof}
    We induct on the number of nodes. The graphs $P_4$ and $P_3 \sqcup P_1$, pictured in Figure \ref{fig:running-example}, are exactly the graphs on four nodes that are not clique stars and that have three maximal cliques. 
    
    Let $G$ be a graph on $n > 4$ nodes which is not a clique star and which has maximal cliques $C_1, C_2$ and $C_3$. If $C_1 \cap C_2 \cap C_3$ is nonempty, then we can remove one of its elements without changing the maximal clique structure of $G$. Indeed let $x \in C_1 \cap C_2 \cap C_3$ and  let $C$ be a  clique in $G - x$. Then $C$ is also a clique in $G$ and hence is contained in some $C_i$. Since $x$ does not belong to $C$, $C$ is contained in $C_i -x$. Moreover, each $C_i -x$ is maximal since none is contained in another. Hence the maximal cliques of $G -x$ are $C_i -x$ for $i = 1,2,3$. Thus $\{ C_1 - x, C_2 -x, C_3 -x \}$ is a compatible collection of cliques in $G-x$, and we may add $x$ to them to obtain $G$.

    Suppose that one of $C_i \setminus (C_j \cup C_k)$ or $(C_i \cap C_j) \setminus C_k$ for distinct $i,j,k \in [3]$ have two or more elements. We claim that removing one of these elements does not change the maximal clique structure of $G$. In the first case, suppose without loss of generality that $x$ and $y$ belong to $C_1$ but not $C_2$ or $C_3$. We claim that $C_1-y$, $C_2$ and $C_3$ are maximal cliques in $G - y$. Indeed, since $C_2$ and $C_3$ are unchanged by removing $y$, they remain maximal in $G - y$. Since $x \in (C_1 -y) \setminus (C_2 \cup C_3)$, we have that $C_1 - y$ is not contained in $C_2$ or $C_3$. So it is also maximal. If $C$ is a clique in $G-y$, then it is also a clique in $G$ and hence is contained in some $C_i$. Since $y \not\in C$, $C$ is also contained in $C_i - y$, as needed. Finally, $G-y$ is not a clique star since the pairwise intersections of maximal cliques are not changed by removing $y$. Hence, we can obtain $G$ from $G-y$ by adding $y$ to $C_1 - y$.

    In the second case, suppose that $x$ and $y$ belong to $C_1 \cap C_2$ and not to $C_3$. As in the previous case, $C_1 - y$, $C_2 - y$ and $C_3$ are maximal cliques in $G-y$. Indeed, $x \in C_1 -y, C_2 -y$, so neither is contained in $C_3$. And neither of $C_1 -y$ and $C_2 -y$ is contained in the other since neither of $C_1$ and $C_2$ are contained in the other and both contain $y$. Similarly, all other cliques in $G-y$ are contained in $C_1 -y$,$C_2 -y$ or $C_3$. Finally, $x \in (C_1 -y) \cap (C_2 - y)$ but $x \not\in (C_1 - y) \cap C_3$. So $G-y$ is not a clique star. Thus $\{C_1 -y, C_2 -y\}$ is a compatible collection of cliques in $G-y$ and we can obtain $G$ by adding $y$ to both of them.

    It remains to consider the non-clique stars on three cliques such that $C_1 \cap C_2 \cap C_3 = \emptyset$ and all of $C_i \setminus (C_j \cup C_k)$ and $(C_i \cap C_j) \setminus C_k$ have size at most one. Such a graph on more than four nodes must have five or six nodes. There are three cases.

    \emph{Case 1:} Suppose $G$ has six nodes so that all of $C_i \setminus (C_j \cup C_k)$ and $(C_i \cap C_j) \setminus C_k$ have size one. Then each $C_i$ is a triangle, their pairwise intersections each have one node, and the intersection of all three is empty. The only such graph is pictured in Figure \ref{fig:ThreeCliqueCase1}, which we can see has four maximal cliques. Hence, this case is impossible.

    \emph{Case 2:} Suppose all of $C_i \setminus (C_j \cup C_k)$ and $(C_i \cap C_j) \setminus C_k$ have size one except for $C_1 \setminus (C_2 \cup C_3) = \emptyset$. Let $C_1 \cap C_2 = \{x \}$, $C_1 \cap C_3 = \{y\}$ and $C_2 \cap C_3 = \{z \}$, as pictured in Figure \ref{fig:ThreeCliqueCase2}. But then $\{x,y,z\}$ is a clique that properly contains $C_1 = \{x,y\}$, which is a contradiction.

    \emph{Case 3:} Suppose all of $C_i \setminus (C_j \cup C_k)$ and $(C_i \cap C_j) \setminus C_k$ have size one except for $(C_1 \cap C_3) \setminus C_2= \emptyset$. This case is pictured in Figure \ref{fig:ThreeCliqueCase3} and can be obtained for $P_4$ by adding a node to the middle clique.

    We have exhausted all possible cases and shown by induction that all non-clique stars with three maximal can be obtained from $P_4$ or $P_3 \sqcup P_1$ by repeatedly adding nodes to a compatible set of maximal cliques.
\end{proof}

\begin{figure}
\centering
\begin{subfigure}{0.3 \textwidth}
\centering
    \begin{tikzpicture}
    \draw[MyBlue] (0,0) -- (2,0) -- (1,1.5) -- (0,0);
    \draw[MyBlue] (0.5,0.75) -- (1,0) -- (1.5,0.75) -- (0.5,0.75);
    \filldraw (0,0) circle (2pt);
    \filldraw (2,0) circle (2pt);
    \filldraw (1,0) circle (2pt);
    \filldraw (1,1.5) circle (2pt);
    \filldraw (0.5,0.75) circle (2pt);
    \filldraw (1.5,0.75) circle (2pt);
    \end{tikzpicture}
    \caption{Case 1.}
    \label{fig:ThreeCliqueCase1}
\end{subfigure}
\begin{subfigure}{0.3\textwidth}
\centering
 \tikzset{
     every node/.style={},
     every path/.style={thick}
}
    \begin{tikzpicture}
     \draw[MyBlue] (0,0) -- (2,0) -- (1.5,0.75) -- (0.5, 0.75) -- (0,0);
    \draw[MyBlue](1.5,0.75) -- (1,0) -- (0.5,0.75);
    \filldraw (0,0) circle (2pt);
    \filldraw (2,0) circle (2pt);
    \filldraw (1,0) circle (2pt) node[anchor = north] {$z$};
    \filldraw (0.5,0.75) circle (2pt) node[anchor = south] {$x$};
    \filldraw (1.5,0.75) circle (2pt) node[anchor = south] {$y$};
    \end{tikzpicture}
    \caption{Case 2.}
    \label{fig:ThreeCliqueCase2}
\end{subfigure}
\begin{subfigure}{0.3\textwidth}
\centering
    \begin{tikzpicture}
    \draw[MyBlue] (0,0) -- (3,0);
    \draw[MyBlue] (1,0) -- (1.5,0.75) -- (2,0);
    \filldraw (0,0) circle (2pt);
    \filldraw (2,0) circle (2pt);
    \filldraw (1,0) circle (2pt);
    \filldraw (3,0) circle (2pt);
    \filldraw (1.5,0.75) circle (2pt);
    \end{tikzpicture}
    \caption{Case 3.}
    \label{fig:ThreeCliqueCase3}
\end{subfigure}
\caption{Graphs with five and six nodes in the proof of Proposition~\ref{prop:ThreeCliques}}
\end{figure}
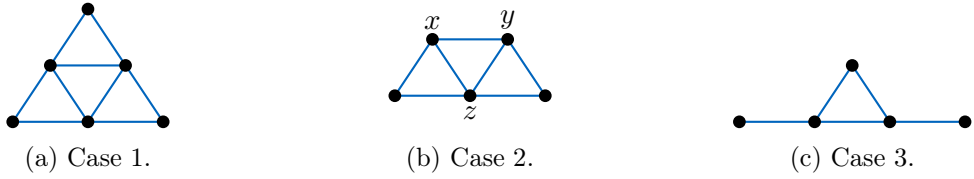

It follows from these results that the second secant of the graphical model on any non-clique star with three cliques has the expected dimension.

\begin{corollary}\label{cor:ThreeCliques}
    Let $G$ be a non-clique star with three maximal cliques and let $\bfd \in \N^{|V(G)|}$ be the list of the number of states of the random variables at the nodes of $G$, each entry of which is greater than or equal to $2$. Then there exists a separating hyperplane of $A_{G,\bfd}$. In particular, the second mixture of the corresponding graphical model has the expected dimension.
\end{corollary}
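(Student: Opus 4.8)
The plan is to feed Proposition~\ref{prop:ThreeCliques} into the two hyperplane-extension theorems and then invoke Theorem~\ref{thm:Draisma}. The first step is to settle the two base cases by exhibiting a separating hyperplane for $A_{P_4,(2,2,2,2)}$ and for $A_{P_3 \sqcup P_1,(2,2,2,2)}$. For $P_3 \sqcup P_1$ the vector $\bfh = (1,1,-1,1 \mid -1,1,-1,-1 \mid -1,1)$ from the example following Definition~\ref{def:SeparatingHyperplane} already works, and for $P_4$ it suffices to check by a direct finite computation that some sign vector $\bfh$ partitions the $16$ columns of $A_{P_4,(2,2,2,2)}$ into two halves, each of full rank $\rank(A_{P_4,(2,2,2,2)})$.

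Now let $G$ be an arbitrary non-clique star with three maximal cliques and let $\bfd \in \N^{|V(G)|}$ have all entries at least $2$. I would reach $A_{G,\bfd}$ from one of the base cases in two stages. \emph{Stage 1 (building the graph).} By Proposition~\ref{prop:ThreeCliques}, $G$ is obtained from some $G_0 \in \{P_4,\ P_3 \sqcup P_1\}$ by a finite sequence of moves, each adding a new node to a compatible subset of the current maximal cliques. Starting from the base separating hyperplane for $A_{G_0,(2,\dots,2)}$ and invoking Theorem~\ref{thm:AddBinaryRV} once per move---the added node being binary and the chosen subset compatible, which are exactly the hypotheses of that theorem---produces a separating hyperplane for $A_{G,(2,\dots,2)}$. \emph{Stage 2 (raising the state counts).} Applying Theorem~\ref{thm:IncreaseStates} repeatedly (it is stated for the first coordinate but holds for any node after relabeling), increase the number of states one node at a time until the state vector is $\bfd$; this takes $\sum_{v \in V(G)} (d_v - 2) \geq 0$ applications and yields a separating hyperplane $\bfh$ for $A_{G,\bfd}$.

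With $\rank(A_{G,\bfd}) = \rank\big((A_{G,\bfd})_+\big) = \rank\big((A_{G,\bfd})_-\big)$, Theorem~\ref{thm:Draisma} gives $\dim \Sec^2(V_{A_{G,\bfd}}) \geq 2\rank(A_{G,\bfd}) - 1 = 2\dim(\M_G) + 1$, where we used that the rank of the $A$-matrix is one more than the model's dimension. This lower bound equals the expected dimension of $\Mixt^2(\M_G)$, and since $\Mixt^2(\M_G)$ is the image of a map from a parameter space of dimension $2\dim(\M_G)+1$, its dimension cannot exceed this value; hence it equals the expected dimension, proving the ``in particular'' claim. The only genuine work is the $P_4$ base-case check and confirming that ``compatible'' is precisely the hypothesis Theorem~\ref{thm:AddBinaryRV} requires at each step of the construction in Proposition~\ref{prop:ThreeCliques}; everything else is bookkeeping along the induction.
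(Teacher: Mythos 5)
Your proposal is correct and follows essentially the same route as the paper: exhibit separating hyperplanes for the binary models on $P_4$ and $P_3 \sqcup P_1$, build $G$ via Proposition~\ref{prop:ThreeCliques} while extending the hyperplane with Theorem~\ref{thm:AddBinaryRV}, raise the state counts with Theorem~\ref{thm:IncreaseStates}, and conclude via Theorem~\ref{thm:Draisma}. The only difference is that the paper records the explicit $P_4$ hyperplane $(1,-1,-1,1 \mid 1,-1,-1,1 \mid -1,1,-1,1)$ rather than deferring to a finite check.
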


\begin{proof}
    First, we note that the $A$-matrices for the binary graphical models on $P_4$ and $P_3 \sqcup P_1$, pictured in Figures \ref{fig:running-example} (b) and (c), each have a separating hyperplane. A hyperplane that separates $A_{P_4, (2,2,2,2)}$ is 
    \[(1, -1, -1, 1 \ | \ 1, -1, -1, 1 \ | \ -1, 1, -1, 1)
    \]
    where the three cliques are ordered from left to right and the assignment of states are ordered reverse lexicographically within each section. Similarly, a hyperplane that separates $A_{P_3 \sqcup P_1, (2,2,2,2)}$~is 
    \[
    (1, 1, -1, 1 \ | \  -1, 1, -1, -1 \ | \ -1, 1).
    \]
    
    By Proposition~\ref{prop:ThreeCliques}, $G$ can be obtained from one of these minimal graphs by repeatedly adding a node to a compatible subset of the maximal cliques of $G$. Hence by Theorem~\ref{thm:AddBinaryRV}, there exists a separating hyperplane for the $A$-matrix of the binary graphical model on $G$. By repeatedly increasing the number of states at each node $i$ until it reaches $d_i$, Theorem~\ref{thm:IncreaseStates} shows that there exists a separating hyperplane for the $A$-matrix of the graphical model on $G$ with states $\bfd$. Hence by Theorem~\ref{thm:Draisma}, the second mixture of this model has the expected dimension.
\end{proof}

In order to prove that the second mixture of \emph{all} decomposable non-clique stars have the expected dimension, we make use of the clique-sum operation. This plays a critical role in the proof of the main result since all decomposable graphs can be obtained as clique-sums of cliques.

\begin{definition}\cite[Section 4.2]{minor-graph-theory}\label{def:CliqueGluing}
    Let  $G$ be an arbitrary graph and let $K$ be another graph that is complete. Let $S$ be a (potentially empty) clique in $G$. The graph $G +_S K$ is obtained from $G$ by adding $K$ and the edges $\{k,s\}$ for all $k \in V(K)$ and all $s \in S$. This graph is the \emph{clique sum of $G$ and $K$ along~$S$.}
\end{definition}

The next lemma is the main tool for extending a separating hyperplane when we add a new clique to the graph via a clique-sum.

\begin{lemma} \label{lem:StatesInS}
Let  $G$ be a graph and let $\bfd \in \N^{|V(G)|}$ be the list of the number of states of the associated random variables. Let $P$ be any matrix consisting of a subset of the columns of $A_{G,\bfd}$ such that $\rk(P)= \rk(A_{G,\bfd})$. Then, for any clique $S$  and any fixed state $j_S \in \mathcal{R}_S$, there is a column in $P$ that is indexed by 
$i \in \mathcal{R}$ such that $i_S = j_S$.
\end{lemma}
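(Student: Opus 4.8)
The plan is to argue by contradiction. Suppose that no column of $P$ is indexed by an $i \in \mathcal{R}$ with $i_S = j_S$; I will deduce that $\rk(P) < \rk(A_{G,\bfd})$, contradicting the hypothesis. The only structural fact I need is that, since $S$ is a clique of $G$, it is contained in some maximal clique $C \in \mathcal{C}(G)$. Thus the rows of $A_{G,\bfd}$ indexed by $\theta^{(C)}_{a}$ for $a \in \mathcal{R}_C$ are present, and among them I can single out those with $a_S = j_S$.

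First I would carry out the elementary computation that drives the argument. Let $c$ be the $0/1$ vector indexed by the rows of $A_{G,\bfd}$ that equals $1$ on exactly the rows $\theta^{(C)}_{a}$ with $a_S = j_S$ and $0$ elsewhere, and set $f := c^{\top} A_{G,\bfd}$, the corresponding sum of rows of $A_{G,\bfd}$. For any column $p_i$, the row $\theta^{(C)}_{a}$ contributes to the $p_i$-entry of $f$ precisely when $i_C = a$; hence the only way this entry is nonzero is if $i_C$ itself satisfies $(i_C)_S = j_S$, and since $S \subseteq C$ this says exactly $i_S = j_S$, in which case the entry equals $1$. So $f$ is the $0/1$ vector with a $1$ in position $p_i$ if and only if $i_S = j_S$. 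Because $d_v \geq 1$ for every $v$, there is some $i \in \mathcal{R}$ with $i_S = j_S$, so $f \neq 0$; in other words $c \notin \lker(A_{G,\bfd})$.

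Now I would finish as follows. By the contradiction hypothesis every column of $P$ is indexed by some $i$ with $i_S \neq j_S$, so $f$ vanishes on all columns of $P$, i.e.\ $c^{\top} P = 0$ and $c \in \lker(P)$. On the other hand $\lker(A_{G,\bfd}) \subseteq \lker(P)$ always holds, since the columns of $P$ form a subset of the columns of $A_{G,\bfd}$. Combined with $c \in \lker(P) \setminus \lker(A_{G,\bfd})$, this gives $\lker(A_{G,\bfd}) \subsetneq \lker(P)$, and since $A_{G,\bfd}$ and $P$ have the same number of rows this forces $\rk(P) < \rk(A_{G,\bfd})$ --- the desired contradiction. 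I do not foresee a genuine obstacle; the only point worth highlighting is that the hypothesis that $S$ is a clique is essential: for an arbitrary set $S$ that is not a clique, the vector $f$ need not lie in the row span of $A_{G,\bfd}$ (and one can check the statement then fails), whereas choosing a maximal clique $C \supseteq S$ is exactly what makes $f$ a sum of genuine rows of $A_{G,\bfd}$.
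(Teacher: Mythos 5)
Your proof is correct and follows essentially the same route as the paper's: both argue by contradiction that a suitable vector lies in the left kernel of $P$ but not of $A_{G,\bfd}$, forcing $\rk(P)<\rk(A_{G,\bfd})$, and both rely on choosing a maximal clique $C\supseteq S$. The only cosmetic difference is your choice of witness --- the sum of all rows $\theta^{(C)}_a$ with $a_S=j_S$ --- where the paper observes that a single such row of $P$ is already zero and uses the corresponding unit vector.
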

\begin{proof}
If $S = \emptyset$ this is trivial since $\rk(A_{G,\bfd}) > 0$. Otherwise, note that the matrix $A_{G,\bfd}$ is an element of $\R^{m \times |\mathcal{R}|}$, where $m = \sum_{C \in \mathcal{C}(G)} \prod_{v \in C} d_v$. By the rank-nullity theorem it holds that
\[
    \dim(\ker A_{G,\bfd}^{\top}) + \rk(A_{G,\bfd}^{\top}) = m.
\]
Since $P$ consists of a subset of columns of $A_{G,\bfd}$ and  since we have $\rk(P^{\top})= \rk(A_{G,\bfd}^{\top})$, it also holds that 
\[
    \dim(\ker P^{\top}) + \rk(A_{G,\bfd}^{\top}) = m
\]
Taking both equalities together, it follows that $\dim(\ker A_{G,\bfd}^{\top}) = \dim(\ker P^{\top})$. Now, fix a clique $S$ and a state $j_S \in \mathcal{R}_S$. Suppose that there is no column in $P$ that is indexed by $i \in \mathcal{R}$ such that $i_S = j_S$. We will show that $\ker P^{\top}$ then contains a vector $\bfx \in \R^m$ that is not in $\ker A_{G,\bfd}^{\top}$. Since $\ker A_{G,\bfd}^{\top} \subseteq \ker P^{\top}$, this implies the strict inequality $\dim(\ker A_{G,\bfd}^{\top}) < \dim(\ker P^{\top})$, which is a contradiction. \looseness=-1

To find a vector $\bfx$ in the kernel of $P^{\top}$, observe that there has to be a maximal clique $C$ in the graph $G$ such that $S \subseteq C$. We further fix some $j_{C\setminus S} \in \mathcal{R}_{C\setminus S}$. By the definition of $A_{G,\bfd}$, the row in $P$ that is indexed by $\theta_{j_S j_{C\setminus S}}^{(C)}$ must be equal to zero. Hence, we define $\bfx$ to be the unit vector that has zero entries everywhere but a one in the position indexed by $\theta_{j_S j_{C\setminus S}}^{(C)}$. Then, $\bfx^{\top}P = 0$ which is equivalent to $\bfx \in \ker P^{\top}$. To conclude, note that $\bfx$ is not in the kernel of $A_{G,\bfd}$ since $A_{G,\bfd}$ does not contain rows that are equal to zero.
\end{proof}

\begin{remark} \label{rem:LemmaNotTrue}
Lemma~\ref{lem:StatesInS} is no longer true in this generality if $S$ is not a clique. Consider the $3$-path $X_1 - X_2 - X_3$, where all variables are binary. In this case the matrix $A_{G, \bfd}$ has eight columns indexed by $i \in \mathcal{R}$. Now, let $P$ be the matrix obtained from removing the columns indexed by $(1,1,1)$ and $(1,2,1)$, that is,
\[ \small{
    P = \begin{blockarray}          {cp{0.2cm}p{0.2cm}p{0.2cm}p{0.2cm}p{0.2cm}p{0.2cm}}
        & 112 & 122 & 211 & 212 & 221 & 222 \\
        \begin{block}{c[cccccc]}
        11\cdot  & 1 & 0 & 0 & 0 & 0 & 0 \\
        12\cdot  & 0 & 1 & 0 & 0 & 0 & 0 \\
        21\cdot  & 0 & 0 & 1 & 1 & 0 & 0 \\
        22\cdot  & 0 & 0 & 0 & 0 & 1 & 1 \\
        \BAhline
        \cdot11  & 0 & 0 & 1 & 0 & 0 & 0 \\
        \cdot12  & 1 & 0 & 0 & 1 & 0 & 0 \\
        \cdot21  & 0 & 0 & 0 & 0 & 1 & 0 \\
        \cdot22  & 0 & 1 & 0 & 0 & 0 & 1 \\
        \end{block}
    \end{blockarray}}.
\]
Define $S$ to be the set $\{1,3\}$ and observe that the matrix $P$ does not contain a column indexed by $i$ such that $i_S=(1,1)$. However, it can be easily verified that the rank of $P$ is equal to $6$, which is equal to the rank of $A_{G, \bfd}$. 
\end{remark}

We now show that we can extend a separating hyperplane when we use the clique sum operation by simply adding zeros.

\begin{theorem} \label{thm:CliqueGluing}
Let  $G$ be a graph and let $\bfd \in \N^{|V(G)|}$ be the list of the number of states of the associated random variables at each node. Let $K$ be a complete graph and, similarly, let $\bfd^K \in \N^{|V(K)|}$ the list of the number of states of the random variables at each node. Let $S$ be a (potentially empty) clique in $G$.
Suppose that $\bfh$ is a separating hyperplane for $A_{G,\bfd}$ and let $\bfhbar$ be the hyperplane that is obtained from $\bfh$ by appending $\prod_{i \in S} \bfd_i \cdot \prod_{j \in V(K)} \bfd^K_{j}$ zeros to $\bfh$. Then $\bfhbar$ is a separating hyperplane for $A_{G +_S K, (\bfd,\bfd^{K})}$.
\end{theorem}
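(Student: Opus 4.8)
The plan is to make the block structure of $A_{G +_S K, (\bfd,\bfd^K)}$ explicit, express the positive- and negative-column submatrices with respect to $\bfhbar$ in terms of those of $A_{G,\bfd}$ with respect to $\bfh$, and then reduce the two rank equalities that define a separating hyperplane to a single statement that is exactly Lemma~\ref{lem:StatesInS}.

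First I would determine the maximal cliques of $G +_S K$. Every vertex $k\in V(K)$ has neighborhood $(V(K)\setminus\{k\})\cup S$, so $\widetilde C:=V(K)\cup S$ is the unique maximal clique meeting $V(K)$, and every maximal clique of $G$ not contained in $S$ remains maximal; I would take $S$ to be a proper subclique of a maximal clique of $G$ (the situation arising from clique-sum decompositions, and the one for which appending $d_S\cdot\prod_{j\in V(K)}\bfd^K_j$ zeros is the correct bookkeeping). Indexing the columns of $A_{G+_SK,(\bfd,\bfd^K)}$ by $\mathcal R\times\mathcal R_K$ and grouping them into $N:=\prod_{j\in V(K)}\bfd^K_j$ blocks according to the value $i_K\in\mathcal R_K$, the rows coming from the maximal cliques of $G$ do not depend on $i_K$ and reproduce $A_{G,\bfd}$ in every column block, while the rows indexed by $\widetilde C$ form the block-diagonal matrix $I_N\otimes E_S$ with $N$ copies of $E_S$ on the diagonal, where $E_S$ is the $d_S\times|\mathcal R|$ matrix whose $(j_S,i)$-entry is $1$ if $i_S=j_S$ and $0$ otherwise. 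Thus
\[
A_{G+_SK,(\bfd,\bfd^K)}=
\begin{pmatrix}
A_{G,\bfd} & A_{G,\bfd} & \cdots & A_{G,\bfd}\\
E_S & 0 & \cdots & 0\\
0 & E_S & \cdots & 0\\
\vdots & & \ddots & \vdots\\
0 & 0 & \cdots & E_S
\end{pmatrix}.
\]
Because the coordinates of $\bfhbar$ appended to $\bfh$ are zero and the $A_{G,\bfd}$-part of the column indexed by $(i,i_K)$ is the column $\bfa_i$ of $A_{G,\bfd}$, we get $\bfhbar\cdot\bfabar_{(i,i_K)}=\bfh\cdot\bfa_i$. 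Hence, writing $\mathcal P:=\{i\in\mathcal R:\bfh\cdot\bfa_i>0\}$ and $P$ for the corresponding column-submatrix of $A_{G,\bfd}$, the submatrix $\bar P$ of columns $\bfabar$ of $A_{G+_SK,(\bfd,\bfd^K)}$ with $\bfhbar\cdot\bfabar>0$ has the displayed block form with $A_{G,\bfd}$ replaced by $P$ and $E_S$ replaced by its column-restriction $\widetilde E_S$ to $\mathcal P$; and $\rk(P)=\rk(A_{G,\bfd})$ by Definition~\ref{def:SeparatingHyperplane}.

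Next I would establish a rank formula for matrices of this shape: for any column set $\mathcal Q\subseteq\mathcal R$, the block matrix with $N$ copies of $A_{G,\bfd}|_{\mathcal Q}$ stacked above $I_N\otimes E_S|_{\mathcal Q}$ has rank $\rk(A_{G,\bfd}|_{\mathcal Q})+(N-1)\rk(E_S|_{\mathcal Q})$. The key point is that every row of $E_S$ lies in the rowspan of $A_{G,\bfd}$: since $S$ is a clique it lies in some maximal clique $C_0$ of $G$, and the row $j_S$ of $E_S$ is the sum over $j_{C_0\setminus S}\in\mathcal R_{C_0\setminus S}$ of the rows of $A_{G,\bfd}$ indexed by $\theta^{(C_0)}_{j_S j_{C_0\setminus S}}$ (the all-ones row if $S=\emptyset$); restricting columns preserves this, so $\binom{A_{G,\bfd}|_{\mathcal Q}}{E_S|_{\mathcal Q}}$ has the same rank as $A_{G,\bfd}|_{\mathcal Q}$. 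A short sequence of elementary operations — subtract the first column block from all the others, then add row blocks $2,\dots,N$ of the $\widetilde C$-part into row block $1$ — makes the matrix block-diagonal with blocks $\binom{A_{G,\bfd}|_{\mathcal Q}}{E_S|_{\mathcal Q}}$ and $N-1$ further copies of $E_S|_{\mathcal Q}$, which gives the formula. Taking $\mathcal Q=\mathcal R$ yields $\rk(A_{G+_SK,(\bfd,\bfd^K)})=\rk(A_{G,\bfd})+(N-1)d_S$, since the rows of $E_S$ have pairwise disjoint supports and hence $\rk(E_S)=d_S$; taking $\mathcal Q=\mathcal P$ yields $\rk(\bar P)=\rk(A_{G,\bfd})+(N-1)\rk(\widetilde E_S)$.

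Finally I would apply Lemma~\ref{lem:StatesInS}. The rows of $\widetilde E_S$ also have pairwise disjoint supports, so $\rk(\widetilde E_S)$ equals the number of its nonzero rows; since $\rk(P)=\rk(A_{G,\bfd})$, Lemma~\ref{lem:StatesInS} applied to $P$ and the clique $S$ guarantees that for every $j_S\in\mathcal R_S$ some column of $P$ has index $i$ with $i_S=j_S$, i.e.\ no row of $\widetilde E_S$ is zero, so $\rk(\widetilde E_S)=d_S$ and $\rk(\bar P)=\rk(A_{G+_SK,(\bfd,\bfd^K)})$. The identical argument applied to the submatrix of columns with $\bfhbar\cdot\bfabar<0$ — using $\rk(A_-)=\rk(A_{G,\bfd})$ from Definition~\ref{def:SeparatingHyperplane} and Lemma~\ref{lem:StatesInS} for that submatrix — shows the same, so $\bfhbar$ is a separating hyperplane. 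I expect the only real work to be the bookkeeping in the first two steps: pinning down the clique structure of $G+_SK$ and the block form, and carrying out the elementary reduction in the rank formula. Conceptually the statement rests entirely on the fact, furnished by Lemma~\ref{lem:StatesInS}, that restricting $A_{G,\bfd}$ to its positive (or negative) columns does not drop the rank of the new marginalization block $E_S$.
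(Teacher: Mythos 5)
Your proof is correct, and it reaches the same two load-bearing observations as the paper's argument: that $\bfhbar\cdot\bfabar_{(i,i^K)}=\bfh\cdot\bfa_i$, so the positive (resp.\ negative) columns of $A_{G+_SK,(\bfd,\bfd^K)}$ are exactly those indexed by $(i,i^K)$ with $i\in\mathcal P$; and that Lemma~\ref{lem:StatesInS} guarantees every $j_S\in\mathcal R_S$ is realized by some column of $P$, which is what prevents the rank from dropping in the new clique block. The execution differs in two ways worth noting. First, the paper groups the columns of $\overline P$ into blocks $\overline P_i$ indexed by $i\in\mathcal P$ and exhibits an explicit linearly independent set of columns of the right size, quoting the rank formula $\rk(A_{G+_SK,(\bfd,\bfd^K)})=\rk(A_{G,\bfd})+d_Sd_K-d_S$ from Ho\c{s}ten--Sullivant; you instead group the columns by $i^K\in\mathcal R_K$, observe that the marginalization block $E_S$ has its rows in the rowspan of $A_{G,\bfd}$ (since $S$ sits inside a maximal clique of $G$), and re-derive that rank formula by elementary block row/column operations in a form that applies simultaneously to any column restriction. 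This makes your argument self-contained and reduces both rank equalities to the single statement that $E_S|_{\mathcal P}$ has no zero row. Second, your treatment is uniform in $S$: the case $S=\emptyset$ (where $E_S$ is the all-ones row) needs no separate argument, whereas the paper handles it as a distinct case. The one hypothesis you flag explicitly --- that $S$ is a proper subclique of a maximal clique of $G$, so that the maximal cliques of $G+_SK$ are those of $G$ together with $V(K)\cup S$ --- is also implicit in the paper's proof (it is needed for the cited rank formula) and holds in the only place the theorem is used, namely the clique-sum construction of decomposable graphs in Theorem~\ref{thm:ExpectedDimension}.
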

\begin{proof}
Let $S \neq \emptyset$. By Definition \ref{def:CliqueGluing}, the graph $G +_S K$ contains exactly the maximal cliques of $G$ plus the additional maximal clique $C_K = K \cup S$. By~\cite[Corollary 2.7]{hosten2002grobner},  this implies that 
\begin{align*}
    \rk(A_{G +_S K, (\bfd,\bfd^{K})}) &= \rk(A_{G,\bfd}) + (d_K - 1) + (d_K - 1) (d_S - 1) \\
    &=   \rk(A_{G,\bfd}) + d_S d_K - d_S,
\end{align*}
where $d_S := \prod_{i \in S} \bfd_i$ and $d_K := \prod_{j \in V(K)} \bfd^K_j$. Without loss of generality, let $P$ be the matrix consisting of the columns $\bfa_{i}$ of $A_{G,\bfd}$ such that $\bfh \cdot \bfa_i > 0$. Define $\mathcal{P}$ to be the set of column indices that appear in $P$, that is, $\mathcal{P} = \{i \in \mathcal{R}: \bfh \cdot \bfa_i > 0\}.$ Let $\overline{P}$ be the matrix consisting of columns $\bfa_{i,i^K}$ of $A_{G +_S K, (\bfd,\bfd^{K})}$ such that $\bfhbar \cdot \bfa_{i,i^K} > 0$, where $i^K \in \mathcal{R}^K := \prod_{i \in V(K)} [d_i]$ . By the definition of $\bfhbar$, the matrix  $\overline{P}$ contains exactly the columns $\bfa_{i,i^K}$ for all $i \in \mathcal{P}$ and  for all $i^K \in \mathcal{R}^K$. Thus, we can reorder the columns of $\overline{P}$ into different blocks $\overline{P}_{i}$ for each $i \in \mathcal{P}$. The blocks are given by 
\[
    \overline{P}_{i} = 
    \begin{blockarray}{c}
    \begin{block}{(c)}
        A_i \\
       \BAhline
       B_{(1, \ldots, 1),i}  \\
       \BAhline
       \vdots \\
        \BAhline
       B_{(\bfd_j)_{j \in S},i} \\
    \end{block}
    \end{blockarray}.
\]
Here, the matrix $A_i$ is given by $d_K = |\mathcal{R}^K|$ copies of the column $\bfa_i$, and the square matrices $B_{j_S,i} \in \mathbb{R}^{d_K \times d_K}$ are defined as the submatrices of $\overline{P}$ with rows indexed by $\theta_{j_S i^K}^{(C_K)}$ and columns indexed by $(i, l^K)$ for $i^K, l^K \in \mathcal{R}^K$. Observe that the matrix $B_{j_S,i} \in \mathbb{R}^{d_K \times d_K}$ is the identity matrix if $j_S=i_S$, and the zero matrix otherwise. 

To show that $\rk(\overline{P}) = \rk(A_{G +_S K, (\bfd,\bfd^{K})})$, we will choose columns of $\overline{P}$ such that the submatrix we obtain has the same rank as $A_{G +_S K, (\bfd,\bfd^{K})}$. To begin, we pick the first column of $\overline{P}_i$ for each $i \in \mathcal{P}$. Since the resulting submatrix contains $A_{G,\bfd}$, its rank is greater or equal than the rank of $A_{G,\bfd}$. Second, by Lemma~\ref{lem:StatesInS}, observe that there has to be an index $i \in \mathcal{P}$ such that $i_S = j_S$ for each $j_S \in \mathcal{R}_S$. Pick such an index $i \in \mathcal{R}$ for each $j_S \in \mathcal{R}_S$ and append the remaining $d_K-1$ columns of $\overline{P}_i$ to our submatrix. In other words, we append $d_S (d_K-1)$ columns to our submatrix. Each of the $d_K-1$ columns of the chosen matrices $\overline{P_i}$ contain a $(d_K - 1) \times (d_K - 1)$ identity matrix in a different position. Hence, the new $d_S (d_K - 1)$ columns are linearly independent. Since they are also linearly independent from the columns we chose in the first step, we conclude that the rank of our submatrix is at least $\rk(A_{G,\bfd}) + d_S (d_K - 1) = \rk(A_{G +_S K, (\bfd,\bfd^{K})})$. Since, $\overline{P}$ is a submatrix of $A_{G +_S K, (\bfd,\bfd^{K})}$, it follows that $\rk(\overline{P})= \rk(A_{G +_S K, (\bfd,\bfd^{K})})$. \\

It remains to consider $S = \emptyset$. In this case, $K$ is a maximal clique in $G +_S K$ and it holds that $\rk(A_{G +_S K, (\bfd,\bfd^{K})}) = \rk(A_{G,\bfd}) + d_K - 1$. With the same notation as above, the matrices $\overline{P_i}$ have the form
\[
     \overline{P}_{i} = 
    \begin{blockarray}{c}
    \begin{block}{(c)}
        A_i \\
       \BAhline
       I_{d_K} \\
    \end{block}
    \end{blockarray},
\]
where $I_{d_K}$ is the $d_K \times d_K$ identity matrix. We will again choose a submatrix of $\overline{P}$ with the same rank as $A_{G +_S K, (\bfd,\bfd^{K})}$. As before, we start by picking the first column of each $\overline{P}_{i}$ to obtain a submatrix of rank greater or equal than the rank of $A_{G,\bfd}$. Then, we choose an arbitrary index $i \in \mathcal{P}$ and append the remaining $d_K-1$ columns of $\overline{P}_i$ to our submatrix. These $d_K-1$ columns are linearly independent since they contain $I_{d_K-1}$. Moreover, these columns are linearly independent from the columns we chose in the first step. Hence, we conclude that $\rk(\overline{P}) = \rk(A_{G,\bfd}) + d_K - 1 = \rk(A_{G +_S K, (\bfd,\bfd^{K})})$.
\end{proof}

We now have all of the results required to prove the main theorem of this section.

\begin{theorem} \label{thm:ExpectedDimension}
    Let $G$ be a decomposable graph that is not a clique star. Then
    \[
        \dim(\text{Mixt}^2(\mathcal{M}_G)) = 2 \dim(\M_G) + 1.
    \] 
\end{theorem}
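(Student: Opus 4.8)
The plan is to reduce the statement to the construction of a separating hyperplane and then build one by induction on the number of maximal cliques. The natural parametrization of $\Mixt^2(\M_G)$ already gives $\dim(\Mixt^2(\M_G)) \le 2\dim(\M_G)+1$, so only the reverse inequality needs proof, and by Theorem~\ref{thm:Draisma} together with the discussion following it, it suffices to show: for every decomposable graph $G$ that is not a clique star and every assignment of states $\bfd$ with each entry at least $2$, the matrix $A_{G,\bfd}$ has a separating hyperplane (Definition~\ref{def:SeparatingHyperplane}). I would prove this by induction on $k := |\mathcal{C}(G)|$. Since every graph with at most two maximal cliques is a clique star, we have $k\ge 3$, and the base case $k=3$ is precisely Corollary~\ref{cor:ThreeCliques}.

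For the inductive step, assume $k\ge 4$. The crucial combinatorial input I would establish is a \emph{removable-leaf lemma}: there is a maximal clique $C$ that is a leaf in some clique tree $T$ of $G$ and such that deleting the vertices private to $C$ yields a graph $G' := G - C$ that is still not a clique star. Such a $G'$ is automatically decomposable with $k-1\ge 3$ maximal cliques. To prove the lemma I would argue by contradiction: if removing \emph{every} leaf of $T$ produced a clique star, choose two distinct leaves $C,C'$ of $T$; since $k\ge 3$ they are non-adjacent. Both $G-C$ and $G-C'$ are clique stars with $k-1\ge 3$ maximal cliques, and their clique sets overlap in the $k-2\ge 2$ cliques distinct from $C$ and $C'$; comparing pairwise intersections of two such common cliques forces both clique stars to have the same separator $S_0$. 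It then follows that any maximal clique of $G$ other than $C,C'$ meets every maximal clique of $G$ in $S_0$, and since every separator along the $C$--$C'$ path in $T$ equals $S_0$, the junction-tree property gives $C\cap C' = S_0$ as well. Hence all pairwise intersections of maximal cliques of $G$ equal the clique $S_0$, so $G$ is a clique star, a contradiction.

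Granting the lemma, let $S$ be the separator of $C$, i.e.\ the intersection of $C$ with the union of the other maximal cliques, and let $K := C\setminus S$, a complete graph on the vertices private to $C$. Then $G = G' +_S K$ in the sense of Definition~\ref{def:CliqueGluing}. Applying the inductive hypothesis to $G'$ with the induced state vector produces a separating hyperplane for $A_{G',\bfd'}$, and Theorem~\ref{thm:CliqueGluing} extends it, by appending zeros, to a separating hyperplane for $A_{G,\bfd}$. This completes the induction and hence the proof.

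The step I expect to be the real work is the removable-leaf lemma: the delicate points are that clique trees are not unique, that one must pick two \emph{non-adjacent} leaves so that the path from $C$ to $C'$ passes only through the ``other'' cliques, and that one must track separators carefully to conclude $C\cap C' = S_0$. Everything else is assembly of tools already in hand --- the upper bound is formal, the base case is Corollary~\ref{cor:ThreeCliques}, and passing across a clique-sum is exactly Theorem~\ref{thm:CliqueGluing}.
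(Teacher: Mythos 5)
Your proposal is correct and uses exactly the paper's toolkit --- reduce to constructing a separating hyperplane via Theorem~\ref{thm:Draisma}, take Corollary~\ref{cor:ThreeCliques} as the base case, and extend across clique-sums with Theorem~\ref{thm:CliqueGluing} --- but you run the induction in the opposite direction. The paper builds \emph{up}: it chooses a perfect ordering $C_1,\dots,C_k$ of the maximal cliques, asserts (without a detailed argument) that one ``may suppose'' the pairwise intersections of $C_1,C_2,C_3$ are not all equal, starts from the three-clique subgraph $H_{123}$, and glues on $C_4,\dots,C_k$ one at a time. You peel \emph{down}: you remove a leaf clique whose deletion preserves the non-clique-star property, apply induction to the smaller graph, and glue the leaf back on. The two are logically equivalent (reversing your peeling order gives a building order of the paper's kind), but your removable-leaf lemma is the honest content behind the paper's ``we may suppose further'' step, and your proof of it --- comparing the forced separators of $G-C$ and $G-C'$ for two non-adjacent leaves, then using the junction-tree property to pin down $C\cap C'$ --- is sound for $k\ge 4$, which is exactly the range where you invoke it. So your write-up actually supplies a combinatorial detail the paper leaves implicit; the only cost is that you must also verify (as you implicitly do) that deleting the private vertices of a leaf clique leaves precisely the other $k-1$ cliques as maximal cliques and realizes $G$ as $G'+_S K$ in the sense of Definition~\ref{def:CliqueGluing}, both of which follow from the running-intersection property.
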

\begin{proof} Decomposable graphs are constructed by repeatedly performing clique-sums with maximal cliques. Let $G$ be a decomposable graph that is not a clique star. Let $\bfd \in \N^{|V(G)|}$ be an assignment of the number of states to the nodes of $G$ with $d_i \geq 2$ for all $i$. Since $G$ is decomposable, there exists an ordering $C_1,\dots,C_k$ of the maximal cliques of $G$ such that $G$ is obtained by performing the clique-sums in this order. Since $G$ is not a clique star, we may suppose further that the pairwise intersections of $C_1, C_2$ and $C_3$ are not all equal.

Let $H_{123}$ be the subgraph of $G$ with maximal cliques $C_1$, $C_2$ and $C_3$.  Let $\bfd_{123}$ be the restriction of $\bfd$ to the nodes in $C_1 \cup C_2 \cup C_3$. By Corollary~\ref{cor:ThreeCliques}, every non-clique star with three maximal cliques has a separating hyperplane. So there exists a separating hyperplane $\bfh_{123}$ for $A_{H_{123},\bfd_{123}}$. 

We proceed by induction on the number of maximal cliques. Let $H_{1\dots \ell}$ be the subgraph of $G$ with maximal cliques $C_1,\dots,C_\ell$. Let $\bfd_{1\dots \ell}$ be the restriction of $\bfd$ to the nodes in $C_1\cup \dots \cup C_{\ell}$. Suppose that $A_{H_{1\dots \ell}, \bfd_{1\dots \ell}}$ has a separating hyperplane $\bfh_{1\dots \ell}$. We perform a clique sum with $C_{\ell+1}$ to obtain $H_{1\dots \ell+1}$. By Theorem~\ref{thm:CliqueGluing}, there exists a hyperplane $\bfh_{1\dots \ell+1}$ that separates $A_{H_{1\dots \ell+1}, \bfd_{1\dots \ell+1}}$. Hence by induction, there exists a hyperplane $\bfh$ separating $A_{G,\bfd}$.

The rank of $A_{G,\bfd}$ is one more than the dimension of the graphical model specified by $G$ and $\bfd$. Hence by Theorem~\ref{thm:Draisma},
\[
\dim(\Mixt^2(\M_G)) = \dim(\Sec^2(\overline{\M_{G,\bfd}})) \geq 2 \dim(\M_{G,\bfd}) + 1.
\]
The right-hand side is the expected dimension of the secant variety, and hence is also an upper bound on the dimension of the secant. Hence they are equal.
\end{proof}

\section{Maximum Likelihood Degree}\label{sec:ml-degree}
For any statistical model $\M\subseteq \Delta_{n}$ and any data point $u$, \emph{maximum likelihood estimation} aims to find the distribution in the model under which the probability of observing this data is maximized. When $u$ is an independently identically distributed vector of counts, this amounts to solving the following optimization problem:
$$\text{Maximize $\ell_u(p):=\sum_{i=1}^nu_i\log(p_i)$ subject to $p\in\M$}.$$ The global maximizer of $\ell_u(p)$ over $\M$ is unique if it exists. It is called the \textit{maximum likelihood estimate (MLE)} of $u$. The algebraic complexity of solving this optimization problem is measured by the maximum likelihood degree.

\begin{definition}
The \textit{maximum likelihood degree (ML degree)} of an algebraic statistical model $\M$ is the number of complex critical points of $\ell_u(p)$ on the Zariski closure of $\M$ for generic data $u\in \C^n$.
\end{definition}

We will denote the ML degree of the model $\M$ by $\MLdeg(\M)$. For a thorough introduction to algebraic methods for maximum likelihood estimation, we refer the reader to Chapter 7 of~\cite{sullivant2018algebraic}. While there is a significant body of work on ML degrees of toric varieties, little is known about the ML-degrees of their mixtures. The current state of the art in this direction is~\cite{ml-degree-formula}, which studies the ML-degree problem for mixtures of independence models. In this section we give a recursive formula for the ML degree of mixtures of clique stars. We also obtain a closed-form formula for the ML degree of 2-mixtures of clique stars with only two cliques.

\begin{theorem}\label{thm:ml-deg-clique star}
    Let $G=(C_1 \cup \cdots \cup C_k \cup S, E)$ be a clique star. Let $\M_{C_1\indep\ldots\indep C_k}$ denote the independence model on $k$ random variables, whose state spaces have sizes $d_{C_1},\ldots, d_{C_k}$, respectively. Then \looseness=-1
    $$\MLdeg(\Mixt^r(\M_G))=\Big(\MLdeg(\Mixt^r(\M_{C_1\indep\ldots\indep C_k}))\Big)^{d_S}.$$ 
\end{theorem}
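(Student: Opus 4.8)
The strategy is to use the structural decomposition of clique stars already established in this paper, namely Theorem~\ref{thm:clique star-ideals}, which writes the vanishing ideal of $\Sec^r(\overline{\M_G})$ as a sum of ideals $I^{(r)}_{j_S, d_{C_1} \times \cdots \times d_{C_k}}$ over the $d_S$ values $j_S \in \mathcal{R}_S$, where each summand involves a disjoint set of variables and is a copy of the ideal of $\Sec^r$ of the independence model on the cliques $C_1,\dots,C_k$. In other words, the variety $\Sec^r(\overline{\M_G})$ is (up to the obvious relabeling) a product of $d_S$ copies of $\Sec^r(\overline{\M_{C_1\indep\ldots\indep C_k}})$. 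The ML degree of a product of varieties in disjoint sets of coordinates multiplies, so the result will follow once this product structure is made precise.

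First I would make the product structure explicit: the coordinate ring $\C[p]$ decomposes as a tensor product $\bigotimes_{j_S \in \mathcal{R}_S} \C[p_{i_{V\setminus S}j_S} : i_{V\setminus S} \in \mathcal{R}_{V\setminus S}]$, and under this identification $I_G^{(r)}$ is the sum of the extensions of the ideals $I^{(r)}_{j_S, d_{C_1}\times\cdots\times d_{C_k}}$ living in the respective tensor factors. Hence $\Sec^r(\overline{\M_G}) \cong \prod_{j_S \in \mathcal{R}_S} \Sec^r(\overline{\M_{j_S; C_1\indep\ldots\indep C_k}})$, and each factor is isomorphic as a variety to $\Sec^r(\overline{\M_{C_1\indep\ldots\indep C_k}})$. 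Next I would write out the log-likelihood function: for generic data $u$, the coordinates split into $d_S$ disjoint blocks $u^{(j_S)}$ indexed by the same partition of the variables, and $\ell_u(p) = \sum_{j_S \in \mathcal{R}_S} \ell_{u^{(j_S)}}(p^{(j_S)})$ is a sum of functions each depending on only one block of variables. The critical equations therefore decouple completely: a point is a critical point of $\ell_u$ on the product variety if and only if each of its $d_S$ blocks is a critical point of $\ell_{u^{(j_S)}}$ on the corresponding factor. Counting complex critical points then gives $\MLdeg(\Mixt^r(\M_G)) = \prod_{j_S \in \mathcal{R}_S} \MLdeg(\Sec^r(\overline{\M_{j_S; C_1\indep\ldots\indep C_k}})) = \big(\MLdeg(\Mixt^r(\M_{C_1\indep\ldots\indep C_k}))\big)^{d_S}$.

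The one genuine subtlety, and what I expect to be the main obstacle, is the interplay between the product decomposition and the affine constraint $\sum_i p_i = 1$ that enters the likelihood geometry. The ML degree is defined with respect to the model sitting in the probability simplex, so one must be careful that the decoupling of the critical equations respects the normalization: the critical points of $\ell_u$ on a variety $V \subseteq \C^n$ (in the sense of the likelihood correspondence) are computed after imposing $p_1 + \cdots + p_n = 1$, and this single linear constraint does \emph{not} split across the $d_S$ blocks. The clean way to handle this is to use the scaling-invariance of the likelihood function: since each factor $\Sec^r(\overline{\M_{j_S; C_1\indep\ldots\indep C_k}})$ is a cone (it is defined by homogeneous equations, being a secant variety of a cone), the critical points of $\ell_u$ on the product are the same, up to the torus action rescaling each block, as tuples of critical points of the unnormalized likelihood on each factor; one then checks that normalizing the whole vector to lie in the simplex selects exactly one representative from each torus orbit, so no critical points are gained or lost. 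This is the standard argument used in~\cite{ml-degree-formula} for mixtures of independence models, and the only work is to verify that it goes through verbatim in the presence of the extra clique $S$, which it does precisely because $S$ contributes only the disjoint block index and no cross terms. Once this normalization bookkeeping is done, the multiplicativity of the critical count is immediate and the theorem follows.
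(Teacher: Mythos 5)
Your proposal is correct and follows essentially the same route as the paper: invoke Theorem~\ref{thm:clique star-ideals} to identify $\Sec^r(\overline{\M_G})$ with a product of $d_S$ copies of the secant of the independence model sitting in disjoint blocks of coordinates, and then use multiplicativity of the number of likelihood critical points over such a product. You are in fact more explicit than the paper's two-line proof about the one genuine subtlety (the normalization $\sum_i p_i = 1$ does not split across blocks, and is handled by the cone/rescaling argument), which the paper passes over implicitly.
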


\begin{proof}
    Note that by Theorem~\ref{thm:clique star-ideals}, $\Mixt^r(\M_G)$ is the Cartesian product of mixtures of independence models, i.e. $\Mixt^r(\M_G)=\prod_{j_S\in\mathcal{R}_S}\Mixt^r(\M_{j_S; C_1\indep\ldots\indep C_k})$. Hence, the log-likelihood function $\ell_u$ is maximized at $p=\prod_{j_S\in\mathcal{R}_S}p_{j_S}$ whenever it is maximized at the restriction to each component $p_{j_S}$. We conclude that the number of critical points of the log-likelihood function is the product of the ML degrees with respect to each component.
\end{proof}
\begin{example}
Let $G$ be the star graph on the right of Figure \ref{fig:ml-degree-clique stars} where the leaves consist of two binary and one ternary random variable and the random variable at the central node has $k$ states. The 2-mixture of the independence model of two binary and one ternary random variables has ML degree 26. By Theorem~\ref{thm:ml-deg-clique star}, the model $\Mixt^2(\M_G)$ has ML degree $26^k$ for any $k\in\mathbb{N}$.
\end{example}
Whenever $G=(C_1\cup C_2\cup S, E)$ and $r=2$, a closed-form formula for the ML degree may be obtained using the results in~\cite{ml-degree-formula}. When $d_{C_1}=3$, we obtain the following corollary.
\begin{corollary}
    Let $G=(C_1\cup C_2\cup S, E)$ be a clique star and assume $d_{C_1}=3$. Then $$\MLdeg(\Mixt^2(\M_G))=(2^{d_{C_2}+1}-6)^{d_S}.$$
\end{corollary}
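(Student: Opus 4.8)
The plan is to combine Theorem~\ref{thm:ml-deg-clique star} with the closed-form ML degree formula for 2-mixtures of independence models from~\cite{ml-degree-formula}. By Theorem~\ref{thm:ml-deg-clique star}, we have
\[
\MLdeg(\Mixt^2(\M_G)) = \Big(\MLdeg(\Mixt^2(\M_{C_1 \indep C_2}))\Big)^{d_S},
\]
so it suffices to show that $\MLdeg(\Mixt^2(\M_{C_1 \indep C_2})) = 2^{d_{C_2}+1} - 6$ when $d_{C_1} = 3$. The model $\Mixt^2(\M_{C_1 \indep C_2})$ is exactly the set of $3 \times d_{C_2}$ matrices of nonnegative rank at most $2$, intersected with the probability simplex; its Zariski closure is the determinantal variety of $3 \times d_{C_2}$ matrices of rank at most $2$, i.e.\ the hypersurface cut out by the single $3 \times 3$ "grand" determinant obtained by summing columns appropriately --- more precisely, it is the variety of the $3 \times 3$ minors of a $3 \times d_{C_2}$ matrix. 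This is the setting to which the results of~\cite{ml-degree-formula} directly apply.

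First I would recall the relevant statement from~\cite{ml-degree-formula}: that paper provides a formula (or a table of values) for $\MLdeg$ of the second mixture of the independence model of two discrete random variables with state spaces of sizes $m$ and $n$. The task is then simply to specialize their general formula to $m = 3$ and $n = d_{C_2}$ and verify that it simplifies to $2^{d_{C_2}+1} - 6$. I would present this as a short computation: write down their expression, substitute $m=3$, and collect terms. A useful sanity check is to test small values: for $d_{C_2} = 2$, the mixture fills the simplex so the ML degree is $1$, and indeed $2^{3} - 6 = 2$ --- wait, this needs care, since for $3 \times 2$ matrices rank $\le 2$ is automatic, so one should double-check whether~\cite{ml-degree-formula} reports $1$ or whether the determinantal variety in that degenerate case is the whole space; I would handle the boundary case $d_{C_2} = 2$ separately or note that the formula is intended for $d_{C_2} \geq 3$. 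For $d_{C_2} = 3$, the variety is the hypersurface $\det = 0$ in $\mathbb{P}^8$, and $2^4 - 6 = 10$, which should match the known ML degree of the $3\times 3$ determinantal hypersurface.

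The main obstacle I anticipate is purely bookkeeping: matching the precise normalization and indexing conventions of~\cite{ml-degree-formula} (which variable plays the role of the "small" dimension, whether their count includes or excludes degenerate critical points, and whether they work projectively or affinely) so that the substitution $d_{C_1} = 3$ genuinely yields $2^{d_{C_2}+1} - 6$ rather than an off-by-one or off-by-a-factor variant. Once the correct statement is quoted, the algebra is a one-line simplification. I would therefore structure the proof as: (1) reduce to the independence-model case via Theorem~\ref{thm:ml-deg-clique star}; (2) cite the relevant formula from~\cite{ml-degree-formula}; (3) substitute and simplify; (4) remark on the base cases to confirm consistency.
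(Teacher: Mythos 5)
Your proposal matches the paper's proof exactly: reduce to the independence model via Theorem~\ref{thm:ml-deg-clique star} and then quote \cite[Theorem 3.12]{ml-degree-formula}, which gives $\MLdeg(\Mixt^2(\M_{C_1\indep C_2}))=2^{d_{C_2}+1}-6$ when $d_{C_1}=3$. Your side remark about the degenerate case $d_{C_2}=2$ (where the mixture fills the simplex, so the ML degree is $1$ rather than $2$) is a fair caveat that the paper itself does not address, but it does not affect the argument for $d_{C_2}\geq 3$.
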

\begin{proof}
By~\cite[Theorem 3.12]{ml-degree-formula}, the ML degree of $\Mixt^2(\M_{C_1\indep C_2})$ where $d_{C_1}=3$ is $2^{d_{C_2}+1}-6$. The result follows
    directly from Theorem~\ref{thm:ml-deg-clique star}.
\end{proof}

A closed-form formula for the ML degree may still be obtained for $d_{C_1}>3$; see~\cite[Section 4.2]{ml-degree-formula}.
    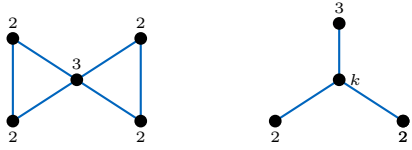
\begin{figure}
        \centering

\tikzset{
      every node/.style={},
}
\begin{tikzpicture}[align=center]
    \draw[MyBlue] (0,0) -- (-0.85,0.55);
    \draw[MyBlue] (0,0)  -- (-0.85,-0.55);
    \draw[MyBlue] (0,0) -- (0.85,0.55);
    \draw[MyBlue] (0,0) -- (0.85,-0.55);
    \draw[MyBlue] (-0.85,0.55) -- (-0.85,-0.55);
    \draw[MyBlue] (0.85,0.55) -- (0.85,-0.55);
    
    \filldraw (0,0) circle (2pt) node[anchor=south] (1) {\tiny $3$};
    \filldraw (-0.85,0.55) circle (2pt) node[anchor=south] (3) {\tiny $2$};
    \filldraw (-0.85,-0.55) circle (2pt) node[anchor=north] {\tiny $2$};
    \filldraw (0.85,0.55) circle (2pt) node[anchor=south] {\tiny $2$};
    \filldraw (0.85,-0.55) circle (2pt) node[anchor=north] {\tiny $2$};
\end{tikzpicture}
\hspace{3em}
\begin{tikzpicture}[align=center]
 
    \draw[MyBlue] (0,0)  -- (-0.85,-0.55);
    \draw[MyBlue] (0,0) -- (0.85,-0.55);
     \draw[MyBlue] (0,0) -- (0,0.75);
    
    \filldraw (0,0) circle (2pt) node[anchor=west] (1) {\tiny $k$};
    \filldraw (-0.85,-0.55) circle (2pt) node[anchor=north] {\tiny $2$};
    \filldraw (0.85,-0.55) circle (2pt) node[anchor=north] {\tiny $2$};
    \filldraw (0.85,-0.55) circle (2pt) node[anchor=north] {\tiny $2$};
    \filldraw (0,0.75) circle (2pt) node[anchor=south] {\tiny $3$};
\end{tikzpicture}

\caption{Two star-cliques.}
\label{fig:ml-degree-clique stars}
\end{figure}
\begin{example}
 Consider the model $\M_G$ given by the graph on the left of Figure \ref{fig:ml-degree-clique stars}.
The model $\M_{C_1\indep C_2}$ is the independence model of two random variables with four states. Its 2-mixture has ML degree 191, according to~\cite[Section 4.2]{ml-degree-formula}. By Theorem~\ref{thm:ml-deg-clique star}, the ML degree of $\Mixt^2(\M_G)$ is $191^3=6,967,871$. 
\end{example}

\section{Discussion}
In this paper, we derived novel algebro-geometric results for mixtures of discrete graphical models. Our work opens up some natural questions for further studies. \\

\noindent
\textbf{Dimension of non-decomposable graphs.} In all our computations, we have found that the second secant of \emph{any} graphical model that is not a clique star also has expected dimension. Hence, it would be desirable to extend our proof in Section \ref{sec:dimension} to non-decomposable graphs. We conjecture that extending the hyperplanes by adding zeros as in Theorem~\ref{thm:CliqueGluing} also works if one glues to a set $S$ that is not a clique. However, as discussed in Remark~\ref{rem:LemmaNotTrue}, our current proof technique is not applicable since Lemma~\ref{lem:StatesInS} is only true if $S$ is a  clique. That being said, the operations described in Theorems \ref{thm:AddBinaryRV} and \ref{thm:IncreaseStates} do not rely on decomposability and Theorem \ref{thm:AddBinaryRV} in particular may be used to obtain some indecomposable graphs. So the second mixture associated to any graph that can be built in this way will also have expected dimension.  \\

\noindent
\textbf{Higher-order secants.} 
To prove that the dimension of second secants of decomposable graphs that are not clique-stars is as expected, we made use of the tool developed by~\cite{draisma} stated in Theorem~\ref{thm:Draisma}. It allowed us to obtain lower bounds on the dimension of second secants by constructing separating hyperplanes. The original statement of Corollary 2.3 in~\cite{draisma} also allows higher order secants with $r > 2$. In this case, one may find $r-1$ hyperplanes and use them to split the vertices of $\text{conv}(A)$ into disjoint sets so that the corresponding columns of each set have full rank; also see~\cite[Proposition 2.6]{banos2019dimensions}. However, there could be more defective secants than the clique star, and in addition, one certainly has to consider more base cases for the induction than the two binary models corresponding to the graphs $P_4$ and $P_3 \sqcup P_1$.  \\

\noindent
\textbf{Ideals of secants of non-clique-stars.} While we fully described the ideal $I_G^{(r)}$ of secants of clique-stars in terms of ideals obtained from tensors of rank $r$ in Theorem~\ref{thm:clique star-ideals}, it remains an open question to determine the ideals of secants obtained from graphs that are not clique stars. Is it possible to characterize the graphs whose ideal $I_G^{(r)}$ is generated only by minors corresponding to the Markov property? How can we explain the other polynomials that arose in Examples~\ref{ex:4-path}~and~\ref{ex:binary-path}?  \\

\noindent
\textbf{Joins.} In this work, we considered secant varieties that correspond to mixtures of the \emph{same} graphical model.  It is also common to consider mixtures of multiple models corresponding to \emph{different} graphs. Algebraically, this leads to the study of joins instead of secants.

\section*{Acknowledgments}
We thank Kaie Kubjas for suggesting this problem and many helpful insights. We also thank Benjamin Hollering, Elizabeth Gross, Joseph Cummings, and Bernd Sturmfels for helpful discussions. We are thankful to Marina Garrote-L\'opez and Nataliia Kushnerchuk for their comments on the manuscript. Part of this research was performed while the authors were visiting the Institute for Mathematical and Statistical Innovation (IMSI), which is supported by the National Science Foundation (Grant No. DMS-1929348). The project has received funding from the European Research Council (ERC) under the European Union’s Horizon 2020 research and innovation programme (grant agreement No 883818). Yulia Alexandr was supported by the National Science Foundation Graduate Research Fellowship under Grant No. DGE 2146752. Jane Ivy Coons was supported by the L'Or\'eal-UNESCO For Women in Science UK and Ireland Rising Talent Award Programme. Nils Sturma acknowledges support by the Munich Data Science Institute at Technical University of Munich via the Linde/MDSI PhD Fellowship program.

\bibliographystyle{plain}
\bibliography{sample}
\end{document}